\newtheorem{theorem}{Theorem}[section]
\newtheorem{proposition}[theorem]{Proposition}
\newtheorem{corollary}[theorem]{Corollary}
\newtheorem{lemma}[theorem]{Lemma}
\newtheorem{thmx}{Theorem}
\theoremstyle{definition}
\newtheorem{definition}[theorem]{Definition}
\newtheorem{remark}[theorem]{Remark}
\crefname{theorem}{Theorem}{Theorems}
\Crefname{theorem}{Theorem}{Theorems}
\crefname{thmx}{Theorem}{Theorems}
\Crefname{thmx}{Theorem}{Theorems}
\crefname{lemma}{Lemma}{Lemmas}
\Crefname{lemma}{Lemma}{Lemmas}
\crefname{proposition}{Proposition}{Propositions}
\Crefname{proposition}{Proposition}{Propositions}
\crefname{corollary}{Corollary}{Corollaries}
\Crefname{corollary}{Corollary}{Corollaries}
\crefname{definition}{Definition}{Definitions}
\Crefname{definition}{Definition}{Definitions}
\crefname{example}{Example}{Examples}
\Crefname{example}{Example}{Examples}
\crefname{remark}{Remark}{Remarks}
\Crefname{remark}{Remark}{Remarks}
\crefname{equation}{}{}
\Crefname{equation}{}{}
\crefname{enumi}{}{}
\Crefname{enumi}{}{}
\setlist[enumerate,1]{label=(\arabic*)} 
\newlist{steps}{enumerate}{1}
\setlist[steps]{label=Step~\arabic*:,align=left,itemindent=0pt,leftmargin=1.5\parindent,labelwidth=*,labelindent=0.5\parindent}
\newcommand{\ul}[1]{\underline{#1}}
\newcommand{\ol}[1]{\overline{#1}}
\newcommand{\longto}[1]{\xrightarrow{#1}}
\newcommand{\Z}{\mathbb{Z}}
\newcommand{\kk}{\Bbbk}
\newcommand{\PP}{\mathbb{P}}
\newcommand{\compl}{\mathbb{C}}     
\newcommand{\hookto}{\hookrightarrow}
\newcommand{\verdier}{\mathbb{D}} 
\newcommand{\aff}{\mathbb{A}}    
\newcommand{\proj}{\mathbb{P}}    
\newcommand{\dcup}{\mathbin{\dot{\cup}}} 
\newcommand{\gdim}[1]{\mathrm{gl.dim}(#1)}
\DeclareMathOperator{\incl}{incl}   
\DeclareMathOperator{\id}{id}   
\DeclareMathOperator{\ev}{ev}   
\newcommand{\cat}[1]{\mathscr{#1}} 
\newcommand{\lie}[1]{\mathfrak{#1}} 
\newcommand{\Serre}{\mathbb{S}} 
\newcommand{\lmodfd}{\textrm{-}\mathbf{mod}_{\mathrm{fd}}} 
\newcommand{\Proj}{\mathbf{Proj}} 
\newcommand{\Inj}{\mathbf{Inj}} 
\newcommand{\ProjInj}{\mathbf{ProjInj}} 
\newcommand{\perv}[1]{\mathrm{Perv}\!\left(#1\right)}    
\newcommand{\Dbc}{\mathbf{D}^\mathrm{b}_\mathrm{c}} 
\newcommand{\D}{\mathbf{D}}                 
\newcommand{\Db}{\mathbf{D}^b}              
\newcommand{\K}{\mathbf{K}}                 
\newcommand{\Ch}{\mathbf{Ch}}               
\newcommand{\catO}{\mathcal{O}}             
\newcommand{\End}{\mathrm{End}}
\newcommand{\Hom}{\mathrm{Hom}}
\newcommand{\Ext}{\mathrm{Ext}}
\newcommand{\RHom}{\mathrm{RHom}}
\newcommand{\RHOM}{\mathrm{R}\mathcal{H}\mathrm{om}}
\newcommand{\Fun}{\mathbf{Fun}} 
\newcommand{\dgFun}{\mathbf{Fun}_{\mathrm{dg}}} 
\newcommand{\dgVect}{\mathbf{dgVect}}           
\newcommand{\ic}[1]{\mathrm{IC}_{#1}}
\newcommand{\op}{\mathrm{op}} 
\newcommand{\stan}[1]{\Delta_{#1}}      
\newcommand{\costan}[1]{\nabla_{#1}}    
\newcommand{\zz}[3]{\mathrm{M}^{#1}_{#2,#3}} 
\newcommand{\GL}{\mathrm{GL}}   
\newcommand{\pt}{\mathbf{pt}}   
\newcommand{\sptw}[1]{\mathsf{ST}_{#1}} 
\newcommand{\dgptw}[1]{\widetilde{\mathsf{PT}}_{#1}} 
\newcommand{\ptw}[1]{\mathsf{PT}_{#1}} 
\newcommand{\pctw}[1]{\mathsf{PT}'_{#1}} 
\DeclareMathOperator{\Sh}{Sh}       
\DeclareMathOperator{\Tw}{Tw}       
\newcommand{\cone}[1]{\mathrm{cone}(#1)}
\title[Serre functor and $\proj$-objects for perverse sheaves on $\proj^n$]{Serre functor and $\proj$-objects\\for perverse sheaves on $\proj^n$}
\author{Lukas Bonfert}
\address{L.B.: Max-Planck-Institut f\"ur Mathematik, Vivatsgasse 7, 53111 Bonn, Germany}
\email{bonfert@mpim-bonn.mpg.de}
\author{Alessio Cipriani}
\address{A.C.: Dipartimento di Informatica - Settore di Matematica, Universit\`a degli Studi di Verona, Strada le Grazie 15 - Ca’ Vignal, I-37134 Verona, Italy}
\email{alessio.cipriani@univr.it}
\subjclass[2020]{18G80, 16E35, 14F08}
\keywords{Triangulated categories, constructible derived category, perverse sheaves, Serre functor, $\proj$-twist, $\proj$-objects}
\begin{document}
    \begin{abstract}
        We show that the inverse Serre functor for the constructible derived category $\Dbc(\proj^n)$ is given by the $\proj$-twist at the simple perverse sheaf corresponding to the open stratum.
        Moreover, we show that all indecomposable perverse sheaves on $\proj^n$ are $\proj$-like objects, and explicitly construct morphisms spanning their total endomorphism spaces.
    \end{abstract}
    \maketitle
    \section{Introduction}
        A Serre functor on a $\kk$-linear triangulated category is an autoequivalence $\Serre\colon\cat{D}\to\cat{D}$ such that for any pair of objects $E,F\in\cat{D}$ there exists a functorial isomorphism $\Hom_{\cat{D}}(E,F)\cong \Hom_{\cat{D}}(F,SE)^{\vee}$.
        Serre functors generalize Serre duality from algebraic geometry, and are an important tool in the theory of triangulated categories.
        For instance, they can be used to construct left (resp.~right) adjoints to functors having a right (resp.~left) adjoint.
        
        Another important class of automorphisms of triangulated categories in algebraic geometry and representation theory are the spherical twists associated to spherical objects \cite{MR1831820}.
        For example, these can be used to construct braid group actions on triangulated categories, and certain functors from representation theory such as shuffling functors can be realized as spherical twists \cite{MR4239691}.
        By definition, an object $E\in\cat{D}$ is $d$-spherical if there is an isomorphism of graded algebras $\End_\cat{D}^*(E)\cong\kk[t]/(t^2)$ with $\deg(t)=d$ and $E$ is $d$-Calabi--Yau.
        The value of the spherical twist $\sptw{E}$ at $X\in\cat{D}$ is then defined by the triangle
        \begin{equation*}
            \Hom_\cat{D}^*(E,X)\otimes E\longto{\ev} X\to\sptw{E}(X)\to\Hom_\cat{D}^*(E,X)\otimes E[1].
        \end{equation*}
        
        Consider the constructible derived category $\Dbc(\proj^n)$ of the complex projective space $\proj^n$ with the usual Bruhat stratification, whose strata (the Bruhat cells) have complex dimension $0, 1,\dots, n$.
        By definition, $\Dbc(\proj^n)$ consists of those complexes of sheaves of $\kk$-vector spaces on $\proj^n$ whose cohomology is locally constant on all strata of $\proj^n$.
        By gluing the standard $t$-structures on the constructible derived category of each stratum (shifted by the dimension of the stratum) one obtains the (middle-)perverse $t$-structure on $\Dbc(\proj^n)$, and its heart is the category $\perv{\proj^n}$ of (middle-)perverse sheaves \cite{BBD}.
        This perverse $t$-structure plays an important role in representation theory since there is an equivalence $\perv{\proj^n}\cong\catO^\lie{p}_0(\lie{sl}_{n+1}(\kk))$, where $\lie{p}\subseteq\lie{sl}_{n+1}(\kk)$ denotes the parabolic Lie subalgebra with block sizes $(n,1)$. 
        As this perverse $t$-structure has faithful heart, this yields an equivalence $\Dbc(\proj^n)\cong\Dbc(\catO^\lie{p}_0(\lie{sl}_{n+1}(\kk)))$ \cite{MR1322847}.

        The category of perverse sheaves on $\proj^n$ is moreover equivalent to the category of finite-dimensional modules over an explicit finite-dimensional algebra $A_n$ \cite{MR1862802}.
        Since $A_n$ has finite global dimension, it follows from results of Happel \cite{Happel} and Bondal--Kapranov \cite{BondalKapranovSerre} that $\Dbc(\proj^n)$ admits a Serre functor, namely the left derived functor of the Nakayama functor $A_n^\vee\otimes_{A_n}-$.
        However, these results do not provide a description of the Serre functor that is intrinsic to the constructible derived category.

        In the case of the complex projective line $\proj^1=\pt\dcup\aff^1$ stratified by a point and its complement, such an intrinsic description is provided in \cite{MR2739061}.
        In this example, the category $\perv{\proj^1}$ has two simple objects corresponding to the two strata.
        Explicitly, these are the skyscraper sheaf $\ic{0}=\incl_*\ul{\kk}_{\pt}$ and the shifted constant sheaf $\ic{1}=\ul{\kk}_{\proj^1}[1]$.
        The simple perverse sheaf $\ic{1}$ is a $2$-spherical object in $\Dbc(\proj^1)$, and the inverse Serre functor for $\Dbc(\proj^n)$ is then given by $\Serre^{-1}=\sptw{\ic{1}}^2$.
        
        Our main result is a generalization of this description of the Serre functor to $\proj^n$.
        In this case, the simple perverse sheaf $\ic{n}=\ul{\kk}_{\proj^n}[n]$ corresponding to the open stratum is not a spherical object, but rather a $\proj^n$-object in the sense of Huybrechts and Thomas \cite{MR2200048}.
        Explicitly this means that there is an isomorphism of graded algebras $\End_{\proj^n}(\ic{n})\cong\kk[t]/(t^{n+1})$ with $\deg(t)=2$, and that $\ic{n}$ is $2n$-Calabi--Yau.
        The corresponding generalization of spherical twists is provided by the $\proj$-twists from \cite{MR2200048}, which are defined as certain ``double cones'', see \cref{Ptwistdef} below.
        In our situation, we obtain:
        \begin{thmx}[\cref{serrefunctortwist}]\label{serrefunctortwistintro}
            The inverse Serre functor $\Serre^{-1}$ of $\Dbc(\proj^n)$ is isomorphic to the $\proj$-twist $\ptw{\ic{n}}$.
        \end{thmx}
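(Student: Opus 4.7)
The plan is to compare both exact auto-equivalences by evaluating them on the simple perverse sheaves $\ic{0}, \ic{1}, \ldots, \ic{n}$, which generate $\Dbc(\proj^n)$ as a triangulated category. A natural way to organize the computation is via the algebraic model $\Dbc(\proj^n) \cong \Db(A_n\lmodfd)$, under which the inverse Serre functor $\Serre^{-1}$ corresponds to the derived Nakayama functor $\RHom_{A_n}(A_n^\vee, -)$. On the object $\ic{n}$ itself, both functors yield $\ic{n}[-2n]$: for $\Serre^{-1}$ this follows from $\ic{n}$ being $2n$-Calabi--Yau, while for $\ptw{\ic{n}}$ the defining double cone simplifies via $\End^*(\ic{n}) \cong \kk[t]/(t^{n+1})$ to produce the shift by $-2n$, as is standard for $\proj$-twists evaluated at the $\proj$-object itself.

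The core of the argument is the evaluation on $\ic{i}$ for $i < n$. I would first compute the graded morphism space $\Hom^*_{\Dbc(\proj^n)}(\ic{n}, \ic{i})$: by adjunction this equals $H^{*+i-n}(\proj^i; \kk)$, which is one-dimensional in each even degree $n-i, n-i+2, \ldots, n+i$, with the $\kk[t]/(t^{n+1})$-action induced by $\End^*(\ic{n})$ corresponding to cup product with the hyperplane class. Substituting this data into the defining double cone of $\ptw{\ic{n}}(\ic{i})$ yields an explicit iterated cone of shifts of $\ic{n}$ mapping via evaluation to $\ic{i}$, whose total cone can be identified inside $\Dbc(\proj^n)$; this description should then be matched with the output of the Nakayama functor applied to the simple $A_n$-module $L_i$.

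To upgrade pointwise agreement into a natural isomorphism $\Phi \colon \ptw{\ic{n}} \xrightarrow{\sim} \Serre^{-1}$, I would construct $\Phi$ globally rather than only on generators, using Serre duality $\Hom^*(\ic{n}, X) \cong \Hom^{2n-*}(X, \ic{n})^\vee$ to reinterpret the graded vector space entering the double cone in terms of morphisms involving $\Serre^{-1} X$; naturality of $\Phi$ would then follow from the naturality of Serre duality, and verifying that $\Phi$ is an isomorphism reduces to the check on the generators above. The main obstacle is the explicit identification of $\ptw{\ic{n}}(\ic{i})$ with $\Serre^{-1}(\ic{i})$ for $0 \le i < n$: tracking the iterated extensions produced by the double cone against the Nakayama output requires careful bookkeeping of the hyperplane class action, and one may hope to shortcut some of this work by exploiting the recollement structure from the Bruhat stratification together with the explicit morphisms between indecomposable perverse sheaves constructed elsewhere in the paper.
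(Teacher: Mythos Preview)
Your proposal has a genuine gap at the step where you pass from pointwise agreement on the simples $\ic{0},\dots,\ic{n}$ to a natural isomorphism of functors. Two triangulated autoequivalences that agree on a generating set of objects need not be naturally isomorphic; you acknowledge this and propose to construct a global $\Phi\colon\ptw{\ic{n}}\to\Serre^{-1}$ ``using Serre duality $\Hom^*(\ic{n},X)\cong\Hom^{2n-*}(X,\ic{n})^\vee$ to reinterpret the graded vector space entering the double cone in terms of morphisms involving $\Serre^{-1}X$'', but this is not a construction. Serre duality for the $2n$-Calabi--Yau object $\ic{n}$ lets you rewrite the vector space $\Hom^*(\ic{n},X)$, but it does not produce a morphism from the double cone defining $\ptw{\ic{n}}(X)$ to $\Serre^{-1}(X)$, and there is no evident natural map out of the $\proj$-twist landing in the inverse Serre functor. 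Without $\Phi$ in hand, the computation on simples, however carefully done, does not close the argument. The paper explicitly flags this difficulty (see the discussion before \cref{serrefunctorcharacterization}, and the reference to \cite{RickardMO}): showing that a given triangulated functor is a derived functor is hard at the level of triangulated categories alone.

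The paper's route is quite different and is engineered to sidestep this problem. Rather than evaluating on the simples, it evaluates $\ptw{\ic{n}}$ on the \emph{injective} perverse sheaves: for $k<n$ one has $\RHom(\ic{n},I_k)=0$ so $\ptw{\ic{n}}(I_k)\cong I_k=P_k$, and an octahedral argument gives $\ptw{\ic{n}}(I_n)\cong P_n$. This verifies the hypotheses of the abstract criterion \cref{serrefunctorcharacterization}, which compares the candidate functor to the right derived inverse Nakayama functor. The criterion then invokes the $\infty$-categorical universal property of $\D^+_\infty$ (Lurie's \cite[Thm.~1.3.3.2]{LurieHigherAlgebra}): a left exact functor on the heart has an essentially unique extension to $\D^+_\infty$, so once $H^0\circ\ptw{\ic{n}}|_{\perv{\proj^n}}\cong\nu^{-1}$ is established, the natural isomorphism $\ptw{\ic{n}}\cong\mathrm{R}\nu^{-1}$ follows. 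Checking on injectives rather than simples is what makes this work, since right derived functors are determined by their values there; your approach via simples has no analogous mechanism to globalize.
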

        This in particular recovers the result from \cite{MR2739061} for $\proj^1$, since by definition a $\proj^1$-object is the same as a $2$-spherical object, and for any $\proj^1$-object $E$ we have $\ptw{E}\cong\sptw{E}^2$.
        
        The proof of \cref{serrefunctortwistintro} relies on a characterization of the Serre functor adapted from \cite{MR2369489}, see \cref{serrefunctorcharacterization}.
        The main idea is to compare the ``candidate inverse Serre functor'' to the inverse Serre functor by studying its action on the injective and projective-injective objects.
        In \cite{MR2369489} the dual version of this criterion was used to describe the Serre functor of $\Db(\catO_0(\lie{g}))$ for any finite-dimensional complex semisimple Lie algebra $\lie{g}$.
        By an entirely formal argument this description also descends to $\Db(\catO^\lie{p}_0(\lie{g}))$ for any parabolic subalgebra $\lie{p}\subseteq\lie{g}$, and thus also to $\Dbc(\proj^n)$.
        In \cref{otherserres}, we summarize these results and their relation to the description of the Serre functor in terms of finite-dimensional algebras, and also relate our description of the Serre functor of $\Dbc(\proj^n)$ to the description of the Serre functor for the full flag variety obtained in \cite{MR2119139}.

        Motivated by \cref{serrefunctortwistintro}, one may ask whether there are further $\proj$-objects in $\perv{\proj^n}$.
        However, it is easy to see that no indecomposable object except for $\ic{n}$ and the projective-injectives can be Calabi--Yau, see \cref{CY_objects}.
        Hence any other indecomposable object $E$ can at best be $\proj^k$-like in the sense that $\End_{\proj^n}^*(E)\cong\kk[t]/(t^{k+1})$.
        
        To describe the indecomposable perverse sheaves, in \cref{strings} we inductively construct certain string objects $\zz{\pm}{a}{b}\in\perv{\proj^n}$ for $0\leq b\leq a\leq n$, starting from the simple objects and (co)standard objects.
        Alternative constructions of these objects can be found in \cite{CiprianiLanini}, where they are used to describe the wall-and-chamber structure of $\perv{\proj^n}$.
        Since $\perv{\proj^n}\cong A_n\lmodfd$ for a special biserial algebra $A_n$, the classification of indecomposable modules over special biserial algebras from \cite{ButlerRingel,WaldWaschbuesch} shows that the string objects together with the indecomposable projective-injective objects are all the indecomposable perverse sheaves.
        
        As the indecomposable projective-injective objects are $0$-spherical, our second result then shows that all indecomposable perverse sheaves are either $\proj$-like or $0$-spherical:
        \begin{thmx}[\cref{stringsplike}]\label{stringsplikeintro}
            Let $0\leq b\leq a\leq n$.
            \begin{enumerate}
                \item If $a-b$ is even, then the string objects $\zz{\pm}{a}{b}$ are $\proj^{(a+b)/2}$-like.
                \item If $a-b$ is odd, then the string objects $\zz{\pm}{a}{b}$ are $\proj^{(a-b-1)/2}$-like.
            \end{enumerate}
        \end{thmx}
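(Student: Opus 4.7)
The strategy is induction on $(a,b)$, using the inductive construction of the string objects provided in \cref{strings}, which expresses each $\zz{\pm}{a}{b}$ (for $b<a$) as an extension of a simple perverse sheaf $\ic{i}$ by a string object with strictly smaller parameters. The base cases --- simples and, where applicable, (co)standards --- are either direct computations or follow from the known geometry of the closed Bruhat cells of $\proj^n$, and they match the formula with $k=a$ in the even case $a=b$.

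For the inductive step, I would apply $\RHom(-,\zz{\pm}{a}{b})$ and $\RHom(\zz{\pm}{a}{b},-)$ to the defining triangle (and to the analogous triangle coming from its companion) in order to obtain long exact sequences of $\Ext^*$-groups whose outer terms involve only strictly simpler objects. The inputs --- $\Hom^*$'s between two smaller strings, $\Hom^*$'s among the simples, and the mixed groups $\Hom^*(\ic{i},\zz{\pm}{a'}{b'})$ --- are supplied respectively by the induction hypothesis, by the known $\Ext$-algebra of the simples (equivalently, by the explicit description of $A_n$), and by auxiliary long exact sequence chases. Summing dimensions should yield $\dim\End^{2j}(\zz{\pm}{a}{b})=1$ for $0\leq j\leq k$ and zero in all other degrees, with $k$ as stated.

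To upgrade this graded dimension count to an algebra isomorphism $\End^*(\zz{\pm}{a}{b})\cong\kk[t]/(t^{k+1})$, I would construct an explicit generator $t\in\End^2(\zz{\pm}{a}{b})$ by transporting the degree-$2$ generator of $\End^*(\ic{n})\cong\kk[t]/(t^{n+1})$ (supplied by \cref{serrefunctortwistintro}) through the defining triangle via functoriality. By induction the powers $t,t^2,\dots,t^k$ remain nonzero, while $t^{k+1}$ must vanish for lack of room in the relevant graded piece. The classes $1,t,\dots,t^k$ therefore span $\End^*(\zz{\pm}{a}{b})$ and provide the explicit spanning morphisms promised in the abstract.

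The main obstacle is the parity split. The same defining triangle produces qualitatively different long exact sequences according to whether $a-b$ is even or odd, because a certain connecting morphism vanishes in one parity case and is nonzero in the other: in the even case both ``ends'' of the string contribute, yielding $k=(a+b)/2$, whereas in the odd case the nonzero connecting map truncates one end's contribution, leaving only $k=(a-b-1)/2$. Proving the (non)vanishing of this connecting map cannot be done by dimension-counting alone; it requires tracing the explicit generator $t$ through the long exact sequence and verifying which powers actually survive. Making this cancellation precise is the technical heart of the argument and is what dictates the case split in the statement.
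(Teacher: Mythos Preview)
Your overall strategy---induction via the defining triangles \cref{delta flag of zigzags}, long exact sequences to compute graded dimensions, then a separate argument for the algebra structure---matches the paper's architecture. However, two points deserve correction.

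First, your construction of the generator $t$ is not right. There is no natural way to ``transport'' the degree-$2$ generator of $\End^*(\ic{n})$ to $\End^*(\zz{+}{a}{b})$; the object $\ic{n}$ does not appear in the defining triangle of $\zz{+}{a}{b}$ and \cref{serrefunctortwistintro} plays no role here. The paper instead uses the top simple $\ic{a}$ of the string: the quotient map $n^0_{a,b}\colon\zz{+}{a}{b}\to\ic{a}$ together with the projections $\pi_{a-2j+2,b}\cdots\pi_{a,b}\colon\zz{+}{a}{b}\to\zz{+}{a-2j}{b}$ pin down a unique lift $\Phi^{2j}_{a,b}\colon\zz{+}{a}{b}\to\zz{+}{a}{b}[2j]$ (see \cref{zigzaghomdiags}). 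The generator is $t=\Phi^2_{a,b}$.

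Second, and more seriously, your sentence ``by induction the powers $t,t^2,\dots,t^k$ remain nonzero'' is where the real difficulty lies, and induction alone does not deliver it. Knowing that $\End^*(\zz{+}{a-2}{b})$ is generated by $\Phi^2_{a-2,b}$ does not automatically tell you that $\Phi^{2i}_{a,b}\Phi^2_{a,b}$ is nonzero, because the morphisms $\Phi^{2i}_{a,b}$ are characterized by diagrams involving $\ic{a}$ and the projections $\pi$, and one must verify that these are compatible under composition. The paper isolates this as two technical lemmas (\cref{criticallemma,secondcriticallemma}) showing that the squares
\[
\begin{tikzcd}
\zz{+}{a-2}{b}\arrow{r}{\ol{\Phi}^2_{a-2,b}}\arrow{d}&\zz{+}{a}{b}[2]\arrow{d}\\
\zz{+}{a-2i-2}{b}\arrow{r}{\ol{\Phi}^2_{a-2i-2,b}}&\zz{+}{a-2i}{b}[2]
\end{tikzcd}
\qquad\text{and}\qquad
\begin{tikzcd}
\zz{+}{a}{b}\arrow{r}{\Phi^2_{a,b}}\arrow{d}&\zz{+}{a}{b}[2]\arrow{d}\\
\ic{b}\arrow{r}{\epsilon^2_{b,b}}&\ic{b}[2]
\end{tikzcd}
\]
commute up to nonzero scalar; the proofs require vanishing results such as $\Hom_{\proj^n}(\zz{+}{a-2}{b},\stan{a}[3])=0$ and $\Hom_{\proj^n}(\zz{+}{a}{b},\zz{+}{a}{b+1}[3])=0$ that are themselves nontrivial. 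Only then does a diagram chase yield $\Phi^{2i}_{a,b}\Phi^2_{a,b}=\Phi^{2i+2}_{a,b}$ up to nonzero scalar. Your proposal does not indicate how this compatibility would be established.

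Finally, your account of the parity split is more heuristic than accurate. The split does not arise from a single connecting map that vanishes or not; rather, the inductive construction of $\zz{+}{a}{b}$ bottoms out at $\ic{b}$ when $a-b$ is even and at $\stan{b+1}$ when $a-b$ is odd, and these base cases feed through the long exact sequences differently (compare the two parts of \cref{HomZZtopIC,homzigzagdelta}).
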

        As easy consequences of \cref{stringsplikeintro}, one also obtains a classification of the spherical, spherelike and exceptional objects in $\perv{\proj^n}$, see \cref{sphericalexceptionals}.
        In particular, this recovers the classification of the exceptional objects from \cite{MR4090927}.

        The proof of \cref{stringsplikeintro} is rather technical and occupies most of \cref{SerreFunctorDescr,PObjectClassification}.
        The first step is to compute $\End_{\proj^n}^*(\zz{\pm}{a}{b})$ by chasing the long exact sequences obtained from the inductive construction of the string objects, see \cref{prel_results,stdzigzaghoms,seczigzaghoms}.
        As the base cases of this construction are the simple objects and the (co)standard objects, this requires us to explicitly fix morphisms between these objects and to determine their compositions, see \cref{simplemorph,stdsimplemorph,subsec_projective_injectives}.
        The computation of $\Hom_{\proj^n}(\zz{\pm}{a}{b},\zz{\pm}{a}{b}[2i])$ also yields canonical non-zero morphisms $\Phi^{2i}_{a,b}\colon \zz{\pm}{a}{b}\to\zz{\pm}{a}{b}[2i]$, and the final step is then to check that $\Phi^{2i}_{a,b}\Phi^2_{a,b}=\Phi^{2i+2}_{a,b}$ up to a non-zero scalar (whenever this is possible by degree reasons), see \cref{zigzagcomposition}.
        \subsection{Notation}
            Throughout the paper, $\kk$ denotes an algebraically closed field of characteristic~$0$.
            The $\kk$-linear duality functor is denoted by $(-)^\vee=\Hom_\kk(-,\kk)$.

            For a $\kk$-linear triangulated category $\cat{D}$ and $A,B\in\cat{D}$ we denote the total Hom space by $\Hom_\cat{D}^*(A,B)=\bigoplus_{r\in\mathbb Z}\Hom_\cat{D}(A,B[r])$, with the degree~$r$ part given by $\Hom_\cat{D}(A,B[r])$.
            We do not write shifts of morphisms, i.e.~we just write $f\colon A[1]\to B[1]$ for $f\colon A\to B$.

            We write $\mathrm{R}F\colon \D^+(\cat{A})\to\D^+(\cat{B})$ for the right derived functor of a left exact functor $F\colon \cat{A}\to\cat{B}$ of abelian categories.
            As usual, we will however suppress the notation for derived functors for functors arising from geometry, such as the pushforward.
            The right derived functor in the $\infty$-categorical sense will be denoted by $\mathbb{R}F\colon\D^+_\infty(\cat{A})\to\D^+_\infty(\cat{B})$.

            The following table is a list of notation for the morphisms in $\Dbc(\proj^n)$ between the simple perverse sheaves $\ic{k}$, standard objects $\stan{k}$ (see \cref{SimpleStdProj}) and string objects $\zz{+}{a}{b}\in\perv{\proj^n}$ (see \cref{strings}) which will be used throughout the paper.
            \begin{center}
                \begin{tabular}{l|l||l|l}
                    Morphism&Definition&Morphism&Definition\\
                    \hline\hline
                    $\epsilon^r_{k,l}\colon \ic{k}\to\ic{l}[r]$&\cref{simplemorph} &  $\psi_{a-2,b}\colon \zz{+}{a-2}{b}\to\stan{a}[1]$&\cref{strings}\\
                    $\mu_{k,l}\colon\stan{k}\to\ic{l}[l-k]$&\cref{topIC_to_lower_nabla} & $m^r_{a,b}\colon\zz{+}{a}{b}\to\ic{b}[r]$&\cref{HomZZplusIC}\\
                    $\delta^r_{k,l}\colon\stan{k}\to\stan{l}[r]$&\cref{DeltaHoms} & $n^r_{a,b}\colon \zz{+}{a}{b}\to\ic{a}[r]$&\cref{HomZZtopICdiag}\\
                    $\phi^r_{k,l}\colon \ic{l}\to\stan{k}[r]$&\cref{mor_simple_stand} & $\Phi^{2i}_{a,b}\colon \zz{+}{a}{b}\to\zz{+}{a}{b}[2i]$&\cref{zigzaghomdiags}\\
                    $\iota_{a,b}\colon \stan{a}\to\zz{+}{a}{b}$&\cref{strings} & $\ol{\Phi}^{2i}_{a,b}\colon \zz{+}{a-2i}{b}\to\zz{+}{a}{b}[2i]$&\cref{zigzaghomdiags}\\
                    $\pi_{a,b}\colon \zz{+}{a}{b}\to\zz{+}{a-2}{b}$&\cref{strings} & $\zeta^{2i}_{a-2i,b}\colon \stan{a-2i}\to\zz{+}{a}{b}[2i]$&\cref{zigzaghomdiags}
                \end{tabular}
            \end{center}
        \subsection*{Acknowledgements}
        We thank Martina Lanini, Catharina Stroppel and Jon Woolf for many helpful discussions about this project.
        We also thank Andreas Hochenegger and Andreas Krug for helpful explanations about Fourier--Mukai transforms.
        For computations of Ext spaces we often used Haruhisa Enomoto's FD Applet \cite{FDApplet}.
        Most of the research was done during visits of L.B. to the University of Verona and of A.C. to the University of Bonn, and we thank both institutions for their hospitality.
        
        L.B. was funded by the Max Planck Institute for Mathematics (IMPRS Moduli Spaces) and the Hausdorff Center for Mathematics, which is funded by the Deutsche Forschungsgemeinschaft (DFG, German Research Foundation) under Germany’s Excellence Strategy -- EXC-2047/1 -- 390685813.
        A.C. was funded by MUR PNRR–Seal of Excellence, CUP B37G22000800006 and supported by the “National Group for Algebraic and Geometric Structures, and their Applications” (GNSAGA - INdAM).
        We also acknowledge the project funded by NextGenerationEU under NRRP, Call PRIN 2022 No. 104 of February 2, 2022 of Italian Ministry of University and Research; Project 2022S97PMY Structures for Quivers, Algebras and Representations (SQUARE).
    \section{Setting and Background}\label{setup}
        \subsection{\texorpdfstring{$\proj$-objects and $\proj$-twists}{P-objects and P-twists}}
            We begin by recalling the $\proj$-twists at $\proj$-objects from \cite{MR2200048}, using the nomenclature from \cite{HocheneggerKrugFormality}.
            Let $\cat{D}$ be a $\kk$-linear triangulated category.
            \begin{definition}
                Let $E\in\cat{D}$ and $k\in\mathbb Z$ with $k\geq 0$.
                \begin{enumerate}
                    \item $E$ is \emph{$\PP^k$-like} if there is an isomorphism of graded $\kk$-algebras $\End_\cat{D}^*(E)\cong \kk[t]/(t^{k+1})$ with $\deg(t)=2$.
                    \item $E$ is a \emph{$\proj^k$-object} if it is $\PP^k$-like, $\Hom_\cat{D}^*(E,X)$ is finite-dimensional for all $X\in\cat{D}$, and $E$ is $2k$-Calabi--Yau.
                    \item A \emph{$\proj$-(like) object} is a $\proj^k$-(like) object for any $k$.
                \end{enumerate}
            \end{definition}
            Recall that $E\in\cat{D}$ is $d$-Calabi--Yau if there is a natural isomorphism
            \begin{equation*}
                \Hom_\cat{D}(E,-)\cong\Hom_\cat{D}(-,E[d])^\vee.
            \end{equation*}
            It is immediate from the definitions that if a $\proj^k$-like object $E$ is $d$-Calabi--Yau, then necessarily $d=2k$, cf.~\cite[Def.~2.1]{HocheneggerKrugRelations} and \cite[Rem.~1.2]{MR2200048}.
            
            Slightly more generally, \cite{Krug} and \cite{HocheneggerKrugFormality} also introduced $\proj^k[d]$-(like) objects, for which (1) is replaced by $\End_\cat{D}^*(E)\cong\kk[t]/(t^{k+1})$ with $\deg(t)=d$.
            Thus $\proj^k[2]$(-like) objects are the same as $\proj^k$(-like) objects.
            As well-known special cases, $\proj^1[d]$-objects and $\proj^1[d]$-like objects are the same as $d$-spherical objects and $d$-spherelike objects, respectively, and $\proj^0$-like objects are the same as exceptional objects.
            
            The following lemma provides a useful criterion for when $\proj^k$-like objects are $\proj^k$-objects.
            \begin{lemma}\label{check_CY_on_proj}
                Let $E\in\cat{D}$ be a $\proj^k$-like object.
                \begin{enumerate}
                    \item $E$ is $2k$-Calabi--Yau if and only if the composition pairing
                        \begin{equation*}
                            \Hom_\cat{D}(E,X)\otimes\Hom_\cat{D}(X,E[2k])\to\Hom_\cat{D}(E,E[2k])\cong\kk
                        \end{equation*}
                        is non-degenerate for all $X\in\cat{D}$.
                    \item If $\cat{D}=\Db(\cat{A})$ for an abelian category $\cat{A}$ of finite global dimension with enough projectives, then $E$ is $2k$-Calabi--Yau if and only if the composition pairing is non-degenerate for all $X=P[r]$ with $P\in\Proj(\cat{A})$ and $r\in\Z$.
                \end{enumerate}
            \end{lemma}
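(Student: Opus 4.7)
For part (1), I would prove both directions via a Yoneda-style translation. Suppose $E$ is $2k$-Calabi--Yau with natural isomorphism $\alpha_X \colon \Hom_\cat{D}(E,X) \xrightarrow{\sim} \Hom_\cat{D}(X,E[2k])^\vee$. The element $\lambda := \alpha_E(\id_E) \in \Hom_\cat{D}(E,E[2k])^\vee$ is non-zero, and since $\Hom_\cat{D}(E,E[2k]) \cong \kk$ is one-dimensional by the $\proj^k$-likeness of $E$, $\lambda$ is automatically an isomorphism onto $\kk$. Naturality of $\alpha$ in the $X$-variable then forces the explicit formula $\alpha_X(f)(g) = \lambda(g \circ f)$ for $f \in \Hom_\cat{D}(E,X)$ and $g \in \Hom_\cat{D}(X,E[2k])$, so $\alpha_X$ is an isomorphism precisely when the composition pairing $(f,g) \mapsto g \circ f$ is non-degenerate (after identifying $\Hom_\cat{D}(E,E[2k])$ with $\kk$ via $\lambda$). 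The converse reverses this construction: fix any isomorphism $\lambda$, define $\alpha_X$ by the same formula (naturality is built into the composition law), and conclude bijectivity from non-degeneracy together with finite-dimensionality of the relevant $\Hom$ spaces.

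For part (2), the plan is a dévissage. Define $\cat{C} \subseteq \Db(\cat{A})$ to be the full subcategory of objects $X$ for which $\alpha_{X[r]}$ is an isomorphism for every $r \in \Z$, equivalently those $X$ at which the composition pairing is non-degenerate on all shifts. By hypothesis $\cat{C}$ contains $P[r]$ for every $P \in \Proj(\cat{A})$ and $r \in \Z$. The key point is that both $\Hom_\cat{D}(E,-)$ and $\Hom_\cat{D}(-,E[2k])^\vee$ are cohomological functors on $\Db(\cat{A})$, and $\alpha$ (constructed from a single fixed $\lambda$) is a natural transformation between them, so any distinguished triangle $Y \to Z \to W \to Y[1]$ yields a ladder of long exact sequences to which the five lemma applies. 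This shows $\cat{C}$ is closed under cones, hence is a triangulated subcategory of $\Db(\cat{A})$ containing all projectives.

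Finally, because $\cat{A}$ has enough projectives and finite global dimension, every object of $\Db(\cat{A})$ admits a finite projective resolution and hence lies in the triangulated subcategory generated by $\Proj(\cat{A})$; this forces $\cat{C} = \Db(\cat{A})$, and part (1) then promotes the pointwise pairing condition to the Calabi--Yau isomorphism. The only real subtlety, and the step where care is needed, is ensuring that $\alpha$ is a bona fide natural transformation between cohomological functors (which requires using the same $\lambda$ for all $X$); once this is in place, both statements reduce to standard diagram chasing together with the generation statement for $\Db(\cat{A})$.
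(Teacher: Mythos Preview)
Your proposal is correct and is exactly the argument carried out in the references the paper cites: the paper's own proof simply points to \cite[Lemma~2.15]{MR1831820} for~(1) and \cite[Lemma~3.3]{MR4239691} for~(2), observing that the only property of $E$ used is $\Hom_\cat{D}(E,E[2k])\cong\kk$, which the $\proj^k$-like hypothesis supplies. One small point worth flagging: in the converse direction of~(1) you rely on finite-dimensionality of the Hom spaces to upgrade non-degeneracy to bijectivity of $\alpha_X$; this is implicit in the paper's setting (and in Seidel--Thomas) but is not strictly part of the $\proj^k$-like hypothesis, so it deserves the explicit mention you gave it.
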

            \begin{proof}\leavevmode
                \begin{enumerate}
                    \item For spherelike objects, this is \cite[Lemma~2.15]{MR1831820}.
                        The same argument works for $\proj$-like objects, as it only requires $\Hom_\cat{D}(E,E[2k])\cong\kk$
                    \item For spherelike objects, this is shown in \cite[Lemma~3.3]{MR4239691}.
                        The same argument works for $\proj$-like objects, as it only requires $\Hom_\cat{D}(E,E[2k])\cong\kk$ and the first part of the lemma.\qedhere
                \end{enumerate}
            \end{proof}
            For the rest of this subsection we fix a \emph{dg enhancement} (originally called \emph{enhancement} in \cite{BondalKapranov}) of $\cat{D}$.
            By definition, a dg enhancement consists of a $\kk$-linear pretriangulated dg category $\widetilde{\cat{D}}$ (as defined in \cite[\S 3, Def.~1]{BondalKapranov}) and an equivalence of triangulated categories $H^0(\widetilde{\cat{D}})\cong\cat{D}$.
            Note that if $\widetilde{\cat{D}}$ is pretriangulated, then $\dgFun(\widetilde{\cat{D}},\widetilde{\cat{D}})$ is again a pretriangulated dg category by \cite[\S 3, Examples, 4.]{BondalKapranov}.
            In particular the cone of a morphism of dg functors is again a dg functor.
            
            If $\widetilde{\cat{D}}$ has all coproducts, then for an object $E\in\widetilde{\cat{D}}$ there is the dg functor $-\otimes E\colon \dgVect_\kk\to \widetilde{\cat{D}}$, which is defined as the right adjoint to $\Hom_{\widetilde{\cat{D}}}(E,-)\colon\widetilde{\cat{D}}\to\dgVect_\kk$.
            We denote the counit of this adjunction by $\ev\colon \Hom_{\widetilde{\cat{D}}}(E,-)\otimes E\to\id_{\widetilde{\cat{D}}}$.
            
            A generalization of spherical twists at spherical objects is provided by the $\proj$-twists at $\proj$-objects from \cite[\S 2]{MR2200048}.
            These are defined as follows:
            \begin{definition}\label{Ptwistdef}
                Let $E\in\cat{D}$ be a $\proj$-like object.
                Assume that the tensor product $\Hom_{\widetilde{\cat{D}}}(E,X)\otimes E\in\widetilde{\cat{D}}$ exists for all $X\in\widetilde{\cat{D}}$.
                \begin{enumerate}
                    \item Pick a closed morphism $\tilde{t}\in\Hom_{\widetilde{\cat{D}}}(E,E[2])$ of degree~$0$ representing an algebra generator of $\End_\cat{D}^*(E)$.
                       Define the dg functor $\dgptw{E}=\cone{\ol{\ev}}\colon\widetilde{\cat{D}}\to \widetilde{\cat{D}}$ by the following commutative diagram in $\dgFun(\widetilde{\cat{D}},\widetilde{\cat{D}})$:
                        \begin{equation}
                            \label{ptwistdiagram}
                            \begin{tikzcd}[column sep=large]
                                \bigl(\Hom_{\widetilde{\cat{D}}}(E,-)\otimes E\bigr)[-2]\arrow{r}{\tilde{t}^*\otimes\id-\id\otimes \tilde{t}}
                                &\Hom_{\widetilde{\cat{D}}}(E,-)\otimes E\arrow{d}{\ev}\arrow{r}
                                &\cone{\tilde{t}^*\otimes\id-\id\otimes \tilde{t}} \arrow[dashed]{ld}{\exists\ol{\ev}}\\
                                &\id_{\widetilde{\cat{D}}}\arrow{ld}&\\
                                \cone{\ol{\ev}}&& 
                            \end{tikzcd}
                        \end{equation}
                        Here $\ol{\ev}\colon \cone{\tilde{t}^*\otimes\id-\id\otimes \tilde{t}}\to\id_{\widetilde{\cat{D}}}$ is the canonical morphism of dg functors induced by $\ev$, which exists since $\ev\circ (\tilde{t}^*\otimes\id-\id\otimes\tilde{t})=0$.
                    \item The \emph{$\proj$-twist at $E$} is the induced triangulated functor $\ptw{E}=H^0(\dgptw{E})\colon \cat{D}\to\cat{D}$.
                \end{enumerate}
            \end{definition}
            \begin{remark}\label{remarksontwists}\leavevmode
                \begin{enumerate}
                    \item A priori, the functor $\ptw{E}\colon\cat{D}\to\cat{D}$ depends on the choices made in the definition.
                        However, as $E$ is a $\proj$-like object, the generator $t\colon E\to E[2]$ of $\End_\cat{D}^*(E)$ is unique up to non-zero scalar, and rescaling $t$ obviously results in naturally isomorphic triangles.
                        Similarly, choosing a different representative for $t$ results in quasi-isomorphic cones.
                        By \cite[Thm.~3.2]{AnnoLogvinenkoUniqueness}, the functor $\ptw{E}\colon\cat{D}\to\cat{D}$ is furthermore independent of the choice of cones and factorization $\ol{\ev}$.
                        Thus $\ptw{E}\colon\cat{D}\to\cat{D}$ is well-defined up to natural isomorphism.
                    \item The assumption that $\Hom_{\widetilde{\cat{D}}}(E,X)\otimes E$ exists for all $X\in\widetilde{\cat{D}}$ is automatically satisfied if $\widetilde{\cat{D}}$ has all coproducts, or if $\Hom_{\widetilde{\cat{D}}}(E,X)$ is finite-dimensional.
                    \item If $\cat{D}=\D^+(\cat{A})$ is the derived category of an abelian category with enough injectives, a dg enhancement of $\cat{D}$ is given by the pretriangulated dg category $\widetilde{\cat{D}}=\Ch^+(\Inj(\cat{A}))$ together with the canonical equivalence $H^0(\widetilde{\cat{D}})=\K^+(\Inj(\cat{A}))\to\D^+(\cat{A})$.
                        Hence by definition, to compute $\Hom_{\widetilde{\cat{D}}}(E,X)$ for a $\proj$-like object $E\in\cat{A}$ and any $X\in\widetilde{\cat{D}}$, one first has to replace $E$ by a (fixed) injective resolution.
                        However, the derived Hom functor $\RHom_\cat{A}(E,-)=\Hom_{\Ch^+(\cat{A})}(E,-)\colon\widetilde{\cat{D}}\to\dgVect_\kk$ is quasi-isomorphic to $\Hom_{\widetilde{\cat{D}}}(E,-)$.
                        Thus, in this situation we can use $\RHom_\cat{A}(E,-)$ instead of $\Hom_{\widetilde{\cat{D}}}(E,-)$ to define the $\proj$-twist $\ptw{E}\colon \cat{D}\to\cat{D}$.
                        This is easier to compute in practice, since here $E$ does not need to be replaced by an injective resolution.
                \end{enumerate}
            \end{remark}
            \begin{remark}\leavevmode
                \begin{enumerate}
                    \item Spherical twists at spherical objects can be generalized to spherical twists at spherical functors.
                        Similarly, $\proj$-twists at $\proj$-objects can be generalized further to $\proj$-twists at (split) $\proj$-functors, see \cite{Addington,Cautis,AnnoLogvinenkoFunctors}.
                        However, we will not use these constructions.
                    \item By \cite{MR3805198}, any autoequivalence of a triangulated category can be realized as a spherical twist at a spherical functor.
                        For $\proj$-twists at $\proj$-objects this can be carried out explicitly, see \cite[\S 4]{MR3805198}.
                \end{enumerate}
            \end{remark}
            The following main properties of $\proj$-twists were proved in \cite{MR2200048} using Fourier--Mukai transforms.
            We briefly sketch how the required properties can be shown purely in terms of dg-enhanced triangulated categories.
            That all statements carry over to the dg setup is presumably well-known to experts, see for instance \cite[Prop.~2.5]{HocheneggerKrugFormality}.
            \begin{proposition}[Huybrechts--Thomas]
                Let $E\in\cat{D}$ be a $\proj^k$-object.
                \begin{enumerate}
                    \item $\ptw{E}\colon\cat{D}\to\cat{D}$ is an equivalence.
                    \item $\ptw{E}(E)\cong E[-2k]$.
                    \item If $E$ is spherical (i.e.~if $k=1$), then $\ptw{E}\cong\sptw{E}^2$.
                \end{enumerate}
            \end{proposition}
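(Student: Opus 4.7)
My plan is to prove the three claims in the order (2), (3), (1), since the computation of $\ptw{E}(E)$ underlies the others.

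For part (2), I evaluate the defining diagram for $\ptw{E}$ at $X = E$. Writing $V = \End_\cat{D}^*(E) \cong \kk[t]/(t^{k+1})$, we have $V \otimes E \simeq \bigoplus_{i=0}^k E[-2i]$ in $\cat{D}$, and in these coordinates the Koszul-type morphism $\tilde t^* \otimes \id - \id \otimes \tilde t$ becomes an explicit block matrix with super-diagonal entries coming from multiplication by $t$ in $V$ and diagonal entries coming from $\tilde t \colon E \to E[2]$. Applying the octahedral axiom to the factorisation $V \otimes E \to \cone{\tilde t^* \otimes \id - \id \otimes \tilde t} \xrightarrow{\ol{\ev}} E$ of $\ev$ yields a distinguished triangle
\begin{equation*}
    V[-2] \otimes E[1] \to \cone{\ev} \to \ptw{E}(E) \to V[-2] \otimes E[2].
\end{equation*}
Since $\ev$ is a split epimorphism (with right inverse the inclusion of the $i=0$ summand), $\cone{\ev} \simeq \bigoplus_{i=1}^k E[-2i+1]$, and a careful unwinding of the octahedral construction identifies the connecting map as an isomorphism on the lower $k$ summands, collapsing the telescope to leave $\ptw{E}(E) \cong E[-2k]$.

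For part (3), assume $k = 1$. A direct cohomological computation using the defining triangle of $\sptw{E}(X)$ and the sphericality of $E$ yields a natural isomorphism $\Hom_\cat{D}^*(E, \sptw{E}(X)) \cong \Hom_\cat{D}^*(E, X)[-1]$. Iterating the spherical-twist triangle and splicing the two via the octahedral axiom produces a distinguished triangle with the same first two terms as the one defining $\ptw{E}(X)$, so $\sptw{E}^2 \cong \ptw{E}$ by the uniqueness of cones.

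For part (1), I would construct a quasi-inverse to $\ptw{E}$ and verify invertibility on generators. The natural candidate is a ``dual $\proj$-twist'' $\pctw{E}$, built from the analogous diagram with $\RHOM_{\widetilde{\cat D}}(-, E)^\vee \otimes E$ in place of $\Hom_{\widetilde{\cat D}}(E, -) \otimes E$ and the coevaluation in place of the evaluation. The $2k$-Calabi--Yau duality $\Hom_\cat{D}(-, E[2k]) \cong \Hom_\cat{D}(E, -)^\vee$ makes $\pctw{E}$ adjoint to $\ptw{E}$ and furnishes a natural transformation $\id \to \pctw{E} \circ \ptw{E}$. Checking it is an isomorphism reduces to (i) $X \in E^\perp$, where both twists act as the identity since the first two terms of the defining triangles vanish, and (ii) $X = E$, handled by part~(2) and its dual analogue. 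A standard generation argument (using finite-dimensionality of $\Hom_\cat{D}^*(E, -)$ together with $E^\perp = {}^\perp\! E$ from the Calabi--Yau property) then extends the isomorphism to all of $\cat{D}$.

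The technically most demanding step is the computation in part~(2), where one must track shifts and signs through the Koszul-type cone and identify the connecting map in the octahedral telescope precisely enough to see that only the top summand $E[-2k]$ survives. Parts~(1) and (3) then reduce to relatively formal triangle-theoretic manipulations, and independence from the choices made in the construction of $\ptw{E}$ is already handled by \cref{remarksontwists}(1).
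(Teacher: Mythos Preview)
Your overall strategy matches the paper's, which essentially defers to \cite{MR2200048} and \cite{MR1831820}: construct the dual twist $\pctw{E}$, use the Calabi--Yau property to make it a two-sided adjoint, and then check the unit on $E$ and on $E^\perp$. Your treatment of part~(2) via the octahedral telescope is a valid alternative to the direct computation of $\cone{t^*\otimes\id-\id\otimes t}$ in \cite[Lemma~2.5]{MR2200048}.

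There are, however, two genuine gaps in your sketch. In part~(3) you conclude $\sptw{E}^2\cong\ptw{E}$ ``by the uniqueness of cones'' from having produced a triangle ``with the same first two terms''. Cones are determined by the \emph{morphism}, not by the two objects, so you must also identify the map $\cone{t^*\otimes\id-\id\otimes t}\to\id$ arising from your octahedral splice with $\ol{\ev}$; and even then, at the level of the homotopy category the resulting isomorphism of cones need not be natural. To obtain an isomorphism of \emph{functors} you must carry out the comparison in the dg enhancement $\dgFun(\widetilde{\cat D},\widetilde{\cat D})$, where cones of natural transformations are functorial. This is exactly why the paper constructs $\dgptw{E}$ at the dg level, and it is what the paper means by ``comparing the diagram defining $\sptw{E}^2$ to the octahedral axiom diagram''.

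In part~(1), your ``standard generation argument'' is not quite right as stated: the set $\{E\}\cup E^\perp$ does \emph{not} in general generate $\cat D$ as a thick triangulated subcategory, so showing that the unit $\id\to\pctw{E}\circ\ptw{E}$ is an isomorphism on these objects does not propagate to all of $\cat D$ by closure under cones. What is true is that $\{E\}\cup E^\perp$ is a \emph{spanning class} (if $\Hom^*(E,X)=0$ then $X\in E^\perp$, whence $\Hom^*(X,X)=0$), and the relevant input is the criterion that a functor with both a left and a right adjoint which is fully faithful on a spanning class is an equivalence. This is the content of \cite[Prop.~2.6]{MR2200048}, and it is precisely where the fact that $\pctw{E}$ is \emph{both} left and right adjoint (which you mention only as ``adjoint'') is used.
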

            \begin{proof}\leavevmode
                \begin{enumerate}
                    \item A computation similar to the proof of \cite[Lemma~2.8]{MR1831820} shows that $\ptw{E}$ has a left adjoint $\pctw{E}$, which is defined dually.
                        Moreover, by a similar argument and the Calabi--Yau property it follows that $\pctw{E}$ is also right adjoint to $\ptw{E}$.
                        The claim then follows by the arguments from \cite[Prop.~2.6]{MR2200048}.
                    \item This is straightforward, see \cite[Lemma 2.5]{MR2200048} for details.
                    \item See \cite[Prop.~2.9]{MR2200048}.
                        Alternatively, this can be seen by comparing the diagram defining $\sptw{E}^2$ to the octahedral axiom diagram for the factorization in the definition of the $\proj$-twist.\qedhere
                \end{enumerate}
            \end{proof}
        \subsection{The Serre functor}
            We recall the notion of Serre functor from \cite{BondalKapranovSerre}.
            \begin{definition}
                A \emph{Serre functor} for a $\kk$-linear triangulated category $\cat{D}$ is a functor $\Serre\colon \cat{D}\to \cat{D}$ such that there are natural isomorphisms
                \begin{equation*}
                    \Hom_\cat{D}(X,Y)\cong \Hom_\cat{D}(Y,\Serre(X))^\vee.
                \end{equation*}
            \end{definition}
            As an immediate consequence of the Yoneda lemma, there is at most one Serre functor for $\cat{D}$ up to natural isomorphism \cite[Prop.~3.4]{BondalKapranovSerre}.

            For $\cat{D}=\Db(A\lmodfd)$ for a finite-dimensional algebra $A$, the Serre functor has the following description:
            \begin{proposition}[Happel]
                \label{fdalgserrefunctor}
                Let $A$ be a finite-dimensional algebra.
                Then $\Db(A\lmodfd)$ has a Serre functor if and only if $\gdim{A}<\infty$.
                In this case, the Serre functor is the left derived functor of the Nakayama functor $A^\vee\otimes_A-$.
            \end{proposition}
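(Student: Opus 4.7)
The plan is to prove the two implications separately, with the Nakayama functor $\nu=A^\vee\otimes_A-\colon A\lmodfd\to A\lmodfd$ as the central player. The starting point is the classical pointwise duality $\Hom_A(P,M)\cong\Hom_A(M,\nu P)^\vee$ for any finitely generated projective $P\in\Proj(A\lmodfd)$ and any $M\in A\lmodfd$: this is checked directly when $P=Ae_i$ (both sides naturally identify with $e_iM$ up to $\kk$-duality via the tensor--hom adjunction and the identity $\nu(Ae_i)=(e_iA)^\vee$), and extends additively to all finitely generated projectives.

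For the sufficiency direction, I would assume $\gdim{A}=d<\infty$, so that every object of $\Db(A\lmodfd)$ is quasi-isomorphic to a bounded complex of finitely generated projectives. Consequently the left derived functor $\mathrm{L}\nu=A^\vee\otimes_A^{\mathrm{L}}-$ preserves $\Db(A\lmodfd)$, and on the dg enhancement $\K^b(\Proj(A\lmodfd))$ it is represented by the termwise Nakayama functor. I would then upgrade the module-level duality to a natural isomorphism $\Hom_{\Db(A\lmodfd)}(X,Y)\cong\Hom_{\Db(A\lmodfd)}(Y,\mathrm{L}\nu X)^\vee$ by replacing $X$ with a bounded projective resolution $P^\bullet$ and $Y$ with an injective resolution, then applying the pointwise duality in each degree of the resulting double complexes; $\kk$-linearity of $\nu$ and exactness of $(-)^\vee$ ensure compatibility with shifts and mapping cones, so the family of isomorphisms assembles into one of bifunctors, exhibiting $\mathrm{L}\nu$ as a Serre functor.

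For the converse, suppose that $\Db(A\lmodfd)$ admits a Serre functor $\Serre$. Then $\Hom_{\Db(A\lmodfd)}(X,Y[r])\cong\Hom_{\Db(A\lmodfd)}(Y,\Serre X[-r])^\vee$ naturally in $X$, $Y$, and $r$, and since both $\Serre X$ and $Y$ lie in $\Db(A\lmodfd)$ (hence have bounded cohomology supported in finitely many degrees), the right-hand side vanishes for $|r|$ sufficiently large. Taking $X=Y=S$ for each simple $A$-module $S$ yields $\Ext^r_A(S,S)=0$ for all large enough $r$; since $A$ has only finitely many isomorphism classes of simple modules, this uniformly bounds the projective dimension of every simple, and hence $\gdim{A}<\infty$.

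The most delicate step is upgrading the module-level duality to a genuine natural isomorphism of bifunctors on $\Db(A\lmodfd)$. Pointwise the isomorphism is easy, but verifying that it is compatible with morphisms of complexes and respects distinguished triangles is cleanest when carried out on the dg enhancement $\K^b(\Proj(A\lmodfd))$ already in use in \cref{setup}: there $\mathrm{L}\nu$ acts termwise and the duality becomes an isomorphism of dg bifunctors, which descends to the required natural isomorphism on $\Db(A\lmodfd)\simeq H^0(\K^b(\Proj(A\lmodfd)))$.
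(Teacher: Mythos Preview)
Your overall strategy matches the paper's: the sufficiency direction is exactly Happel's argument (which the paper simply cites), and the necessity direction uses the Serre duality isomorphism together with boundedness of cohomology. Your sketch of the sufficiency part is fine.

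There is, however, a genuine slip in your necessity argument. You take $X=Y=S$ for each simple $S$ and conclude $\Ext_A^r(S,S)=0$ for $r\gg 0$, then assert that this ``uniformly bounds the projective dimension of every simple''. But vanishing of the self-extensions $\Ext_A^r(S,S)$ does not by itself bound $\mathrm{pd}(S)$: what you need is $\Ext_A^r(S,L)=0$ for \emph{all} simples $L$ and all large $r$. Your Serre-functor argument gives exactly this if you allow $X$ and $Y$ to range independently over the (finitely many) simples, or equivalently if you take $X=Y=A/\mathrm{rad}(A)$; the restriction to the diagonal $X=Y=S$ loses the mixed Ext-groups $\Ext_A^r(S,L)$ with $S\not\cong L$, and it is not clear (and likely false in general) that bounding only the diagonal ones suffices. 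The paper's version sidesteps this entirely by taking $X\in A\lmodfd$ arbitrary and $L$ simple, obtaining
\[
\Ext_A^r(X,L)\cong\Hom_{\Db(A\lmodfd)}(L,\Serre(X)[-r])^\vee,
\]
which vanishes once $\Serre(X)[-r]$ lies in the positive part of the standard $t$-structure; this directly bounds $\mathrm{pd}(X)$ for each $X$. The fix to your argument is the one-word change from ``$X=Y=S$'' to ``$X,Y$ simple''.
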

            \begin{proof}
                If $\gdim{A}<\infty$, then the left derived Nakayama functor is a Serre functor by \cite[Prop.~I.4.10]{Happel}.
                Conversely, it is clear that if $\Serre$ is a Serre functor, then
                \begin{equation*}
                    \Ext_A^r(X,L)\cong\Hom_{\Db(A\lmodfd)}(X,L[r])\cong\Hom_{\Db(A\lmodfd)}(L,\Serre(X)[-r])^\vee
                \end{equation*}
                for any $X\in A\lmodfd$ and any simple $A$-module $L$, and the right-hand side vanishes for $r$ large enough such that $\Serre(X)[-r]\in\cat{D}^{>0}$, where $\cat{D}^{>0}$ denotes the positive part of the standard $t$-structure.
            \end{proof}
        \subsection{The constructible derived category \texorpdfstring{$\Dbc(\PP^n)$}{Dbc(Pn)}}\label{setupDbcPn}
            We recall the constructible derived category of the complex projective space $\proj^n=\proj^n_\compl$, which will be the focus of the rest of the paper.
            The same construction and all of the tools work in much greater generality, see \cite{BBD}, \cite{HottaTakeuchiTanisaki} or \cite{Achar}.
            
            Consider $\proj^n$ with the usual stratification by Bruhat cells, i.e.~by the subspaces
            \begin{equation*}
                S_k=\{[x_0:\dots:x_{k-1}:1:0:\dots :0]\}\subseteq\proj^n
            \end{equation*}
            for $0\leq k\leq n$.
            We identify $S_k\cong\aff^k$ by projection to the first $k$ coordinates.
            We denote the strata inclusions by $\jmath_k\colon\aff^k\to\proj^n$, and write $\imath_k\colon\proj^k\hookto\proj^n$.
            By slight abuse of notation, we also write $\jmath_k$ and $\imath_k$ for the inclusions of $\aff^k$ and $\proj^k$, respectively, into any $\proj^l$ with $k\leq l\leq n$.
            
            We denote by $\Db(\mathrm{Sh}(\proj^n))$ the bounded derived category of sheaves of finite-dimensional $\kk$-vector spaces on $\proj^n$.
            The \emph{constructible derived category} is the full triangulated subcategory $\Dbc(\proj^n)\subset\Db(\mathrm{Sh}(\proj^n))$ consisting of the complexes whose cohomologies are (locally) constant on all strata.
            For brevity we write $\Hom_{\proj^n}(-,-)=\Hom_{\Dbc(\proj^n)}(-,-)$.
            We emphasize that we always consider the constructible derived category with respect to a fixed stratification, in constrast to e.g.~\cite{Achar}.

            The constructible derived category has a natural $t$-structure, which is obtained by iterated gluing of the shifted (by $-\frac{1}{2}\dim S_k=-k$) standard $t$-structures on $\Dbc(S_k)$ along the recollements provided by the strata inclusions, see (the proof of) \cite[Prop.~2.1.3]{BBD}.
            Its heart is the category of \emph{(middle-)perverse sheaves} $\perv{\proj^n}$.
            
            The \emph{Verdier duality functor} on $\Dbc(\proj^n)$ is $\verdier=\RHOM_{\Db(\proj^n)}(-,\omega_{\proj^n})$, where $\omega_{\proj^n}=a_{\proj^n}^!\ul{\kk}_\pt$ (with $a_{\proj^n}\colon \proj^n\to\pt$) is the dualizing sheaf.
            Since $\proj^n$ is smooth, we in fact have $\omega_{\proj^n}\cong\ul{\kk}_{\proj^n}[2n]$.
            Verdier duality restricts to a (contravariant) involution $\verdier\colon\Dbc(\proj^n)\to\Dbc(\proj^n)$.
            Moreover, from the definition of perverse sheaves it follows that $\verdier$ preserves $\perv{\proj^n}$.

            By variants of Beilinson's theorem (see \cite[Cor.~3.3.2]{MR1322847} or \cite[Prop.~1.5]{MR2119139}) the perverse $t$-structure on $\Dbc(\proj^n)$ has \emph{faithful heart}.
            This means that there is a \emph{realization functor}, i.e.~a triangulated functor  $\Db(\perv{\proj^n})\to\Dbc(\proj^n)$ such that the diagram
            \begin{equation*}
                \begin{tikzcd}
                    \perv{\proj^n}\arrow[hook]{d}\arrow[hook]{rd}&\\
                    \Db(\perv{\proj^n})\arrow{r}
                    &\Dbc(\proj^n)
                \end{tikzcd}
            \end{equation*}
            commutes, and that the realization functor is an equivalence.
            In particular, the realization functor provides isomorphisms $\Ext_{\perv{\proj^n}}^r(X,Y)\cong\Hom_{\proj^n}(X,Y[r])$ for all $r\geq 0$ (note that for $r\leq 1$ this holds even if the $t$-structure does not have faithful heart).
            For $r=1$ this allows to interpret triangles $Y\overset{f}{\to} Z\overset{g}{\to} X\to Y[1]$ with $X,Y,Z\in\perv{\proj^n}$ as short exact sequences $0\to Y\overset{f}{\to} Z\overset{g}{\to} X\to 0$ in $\perv{\proj^n}$.
        \subsection{Simple, standard and projective objects}\label{SimpleStdProj}
            We recall the explicit description of the simple perverse sheaves, and also the standard and costandard objects.
            This also allows one to describe the indecomposable projective and injective perverse sheaves, and derive some important  properties of $\perv{\proj^n}$.
            While all of this is well-known, see for instance \cite{Achar,BBD,MR1322847,CiprianiWoolf,MR0925719}, the explicit constructions in this subsection are central to the arguments in \cref{SerreFunctorDescr,PObjectClassification}.
            \subsubsection{Simple objects}
                The simple perverse sheaves are the \emph{IC sheaves} $\ic{k}=\jmath_{k,!*}\ul{\kk}_{\aff^k}[k]$ for $0\leq k \leq n$, where $\jmath_{k,!*}$ denotes the intermediate extension functor.
                As all strata closures are smooth, the IC sheaves are extensions by zero of shifted constant sheaves supported on the strata closures, i.e.~we have $\ic{k}\cong\imath_{k,*}\ul{\kk}_{\PP^k}[k]$.
                Note that $\verdier(\ic{k})\cong\ic{k}$, and $\imath_{k-1,*}\imath_{k-1}^*\ic{k}\cong\ic{k-1}[1]$ and $\imath_{k-1,*}\imath_{k-1}^!\ic{k}\cong\ic{k-1}[-1]$.
            \subsubsection{Standard objects}
                For $0\leq k\leq n$, the \emph{standard objects} $\stan{k}=\jmath_{k,!}\ul{\kk}_{\aff^k}[k]$ are perverse sheaves.
                We have $\stan{0}\cong\ic{0}$, and for $k\geq 1$ the recollements provide triangles
                \begin{equation}\label{delta_comp_series}
                    \ic{k-1}\longto{\phi^0_{k-1,k}}\stan{k}\longto{\mu_{k,k}}\ic{k}\longto{\epsilon^1_{k,k-1}}\ic{k-1}[1],
                \end{equation}
                where $\epsilon^1_{k,k-1}\colon \ic{k}\to\imath_{k-1,*}\imath_{k-1}^*\ic{k}\cong\ic{k-1}[1]$ is the adjunction unit and $\mu_{k,k}\colon\stan{k}\cong \jmath_{k,!}\jmath_k^*\ic{k}\to\ic{k}$ the adjunction counit.
                Note that for the applications in \cref{SerreFunctorDescr,PObjectClassification} one has to be careful with the choice of the (co)units, see \cref{simplemorph} for details.
                The notation here is more complicated than necessary at this point, but chosen for consistency with \cref{stdsimplemorph,subsec_projective_injectives} below where we will describe more general morphisms $\phi^r_{k,l}\colon \ic{k}\to\stan{l}[r]$ and $\mu_{k,l}\colon\stan{k}\to\ic{l}[l-k]$.
            
                Interpreted as short exact sequences in $\perv{\proj^n}$, the triangles \cref{delta_comp_series} are the composition series of the standard objects.
            
                Dually, the \emph{costandard objects} $\costan{k}=\jmath_{k,*}\ul{\kk}_{\aff^k}[k]\cong \verdier(\stan{k})$ are also perverse sheaves.
                Explicitly, their composition series are given by $\costan{0}\cong\ic{0}$ and the recollement triangles
                \begin{equation*}
                    \ic{k}\to\costan{k}\to\ic{k-1}\to\ic{k}[1]
                \end{equation*}
                for $k>0$.
            \subsubsection{Projective objects}
                The category $\perv{\proj^n}$ has enough projectives.
                The explicit construction of the indecomposable projective objects from the proof of \cite[Thm.~4.6]{CiprianiWoolf} shows that $P_n\cong\stan{n}$, while for $k<n$ the object $P_k$ arises from a triangle
                \begin{equation}\label{delta_flags}
                    \stan{k+1} \to P_k\to \stan{k}\longto{\delta_{k,k+1}}\stan{k+1}[1].
                \end{equation}
                In particular the indecomposable projective objects have $\stan{}$-flags.
                The morphism $\delta_{k,k+1}$ will be described precisely after \cref{DeltaHoms}.
                
                Dually, $\perv{\proj^n}$ also has enough injectives, and the indecomposable injective objects have $\costan{}$-flags given by $I_n=\costan{n}$ and the triangles
                \[
                    \costan{k} \to I_k\to \costan{k+1}\to \costan{k}[1]
                \]
                for $k<n$.
            \subsubsection{Highest weight structure}
                It follows that $\perv{\proj^n}$ is a highest weight category in the sense of \cite{ClineParshallScott} (see \cite{BrundanStroppelHW} for a modern treatment) with respect to the order $0<1<\dots<n$, since the (co)standard objects as defined above are indeed the (co)standard objects in the sense of highest weight categories: the triangles defining the indecomposable projective objects and the composition series of the standard objects show that $\stan{k}$ is the maximal a quotient of $P_k$ such that all composition factors except the top are $\ic{l}$ with $l<k$.
    
                By \cite[Prop.~A2.3]{DonkinqSchur} or \cite[Rem.~3.28]{BrundanStroppelHW}, the highest weight structure of $\perv{\proj^n}$ gives a bound on the global dimension, namely $\gdim{\perv{\proj^n}}\leq 2n$.
                Since the perverse $t$-structure has faithful heart, from  \cref{maps_between_simples} below we get $\Ext_{\perv{\proj^n}}^{2n}(\ic{n},\ic{n})\cong\Hom_{\proj^n}(\ic{n},\ic{n}[2n])\cong\kk$, and thus $\gdim{\perv{\proj^n}}=2n$.
        \subsection{Other descriptions of the category of perverse sheaves}\label{otherperspectives}
            There are two other well-known descriptions of the category $\perv{\proj^n}$, namely in terms of finite-dimensional algebras and via Lie algebras.
            We will not use these descriptions throughout the paper, with the exception that the classification results in \cref{sphericalexceptionals,onlyPlikes} rely on the classification of indecomposable perverse sheaves, which is obtained from the finite-dimensional algebras description.
    
            The Lie-theoretic description of the constructible derived category is given by the equivalence $\Dbc(\proj^n)\cong\Db(\catO^\lie{p}_0(\lie{sl}_{n+1}(\kk)))$ from \cite[Thm.~3.5.3]{MR1322847}, see also \cite[Rem.~7.3.10]{Achar} for an overview.
            Here $\lie{p}\subseteq\lie{sl}_{n+1}(\kk)$ is the maximal parabolic subalgebra with block sizes $(n,1)$.
            This equivalence identifies the standard $t$-structure on $\Db(\catO^\lie{p}_0(\lie{sl}_{n+1}(\kk)))$ with the perverse $t$-structure on $\Dbc(\proj^n)$, and thus yields $\perv{\proj^n}\cong\catO^\lie{p}_0(\lie{sl}_{n+1}(\kk))$.
            
            Furthermore, there is an equivalence  $\perv{\proj^n}\cong A_n\lmodfd$, where
            \begin{equation*}
                A_n=\kk\left(\begin{tikzcd}[bend angle=20]
                    \makebox(0.6cm,0.3cm)[c]{$0$}\arrow[bend left]{r}{b_1}&
                    \makebox(0.6cm,0.3cm)[c]{$1$}\arrow[bend left]{l}{a_1}\arrow[bend left]{r}{b_2}&
                    \makebox(0.6cm,0.3cm)[c]{$\dots$}\arrow[bend left]{l}{a_2}\arrow[bend left]{r}{b_{n-1}}&
                    \makebox(0.6cm,0.3cm)[c]{$n-1$}\arrow[bend left]{l}{a_{n-1}}\arrow[bend left]{r}{b_n}&
                    \makebox(0.6cm,0.3cm)[c]{$n$}\arrow[bend left]{l}{a_n}
                \end{tikzcd}\right)/
                \left(\begin{smallmatrix}
                    a_{i-1}a_i\\
                    b_{i+1}b_i\\
                    a_ib_i-b_{i-1}a_{i-1}\\
                    b_na_n
                \end{smallmatrix}\right).
            \end{equation*}
            The algebra $A_n$ is directly linked to the Lie-theoretic description by an equivalence $\catO^\lie{p}_0(\lie{sl}_{n+1})\cong A_n\lmodfd$, see \cite[Prop.~2.9]{MR1862802} and \cite[Ex.~1.1]{stroppelTQFT}.
            
            From the finite-dimensional algebras description, it is very easy to check the properties mentioned in \cref{SimpleStdProj}: for instance one can easily write down the indecomposable projective objects and the standard objects to see that $A_n\lmodfd$ is highest weight.
            One can also explicitly determine projective and injective resolutions of the simple $A_n$-modules, which in particular shows that $\gdim{A_n}=2n$.
            
            The algebra $A_n$ is special biserial, and thus \cite[p.~161, Thm.]{ButlerRingel} and \cite[Prop.~2.3]{WaldWaschbuesch} provide a combinatorial description of the indecomposable $A_n$-modules.
            Explicitly, there are the indecomposable projective-injective objects $P_i$ for $0\leq i\leq n-1$, and certain \emph{string modules} $\zz{\pm}{a}{b}$ with $0\leq b\leq a\leq n$, see \cite[\S 2.4]{MR4090927} or \cite[\S 4]{CiprianiLanini} for an explicit list.
            It follows that $A_n$ has $n+(n+1)+2\binom{n+1}{2}=n+(n+1)^2$ isomorphism classes of indecomposable modules.
            In \cref{strings} below we provide a construction of the string modules in terms of perverse sheaves, which can also be found in \cite{CiprianiLanini}.
            
            Note that it is a special property of $\proj^n$ that the indecomposable perverse sheaves can be classified.
            For more general (partial) flag varieties, the category of perverse sheaves is usually of wild representation type.
    \section{Description of the Serre functor of \texorpdfstring{$\Dbc(\PP^n)$}{Dbc(Pn)}} \label{SerreFunctorDescr}
        In this section we show that the IC sheaf $\ic{n}$ corresponding to the open stratum is a $\proj^n$-object in $\Dbc(\proj^n)$, and that the $\proj$-twist at $\ic{n}$ is the inverse Serre functor of $\Dbc(\proj^n)$.
        To do this, we first need to understand morphisms between simple perverse sheaves, and morphisms from simple to projective perverse sheaves.
        These technical results will also be used in \cref{PObjectClassification}.
        \subsection{Morphisms between simples}\label{simplemorph}
            Recall  (see e.g.~\cite[Rem.~1.2.5]{Achar}) that for a variety $X$ we have by definition $\Hom_X(\ul{\kk}_X,\ul{\kk}_X[r])=H^r(X)$.
            This allows to determine the morphisms between the shifted simple perverse sheaves in terms of the stratification.
            We briefly recall this well-known fact, see for instance \cite[p.~217, after the Remark]{MR1862802}.
            \begin{lemma}\label{maps_between_simples}
                For $0\leq k,l\leq n$ and $r\geq0$, there is an isomorphism of vector spaces
                \[
                    \Hom_{\proj^n}(\ic{k},\ic{l}[r])\cong H^{r-|k-l|}(\PP^l \cap \PP^k)
                \]
                In particular, if $k=l$ there is an isomorphism of graded algebras $\End_{\PP^n}^*(\ic{k})=H^*(\proj^k)\cong\kk[t]/(t^{k+1})$ with $\deg(t)=2$.
            \end{lemma}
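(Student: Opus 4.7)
The plan is to reduce the Hom computation to cohomology of projective space via adjunctions. Using the identification $\ic{k}\cong\imath_{k,*}\ul{\kk}_{\proj^k}[k]$, Verdier duality $\verdier(\ic{k})\cong\ic{k}$ and the fact that $\verdier$ is a contravariant equivalence give
\begin{equation*}
    \Hom_{\proj^n}(\ic{k},\ic{l}[r])\cong\Hom_{\proj^n}(\verdier(\ic{l}[r]),\verdier(\ic{k}))\cong\Hom_{\proj^n}(\ic{l},\ic{k}[r]),
\end{equation*}
so without loss of generality I may assume $k\leq l$.

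Assuming $k\leq l$, I would factor the inclusion as $\imath_k=\imath_l\circ\imath_{k,l}$, where $\imath_{k,l}\colon\proj^k\hookto\proj^l$ is the linear embedding, and use the full faithfulness of the closed pushforward $\imath_{l,*}$ to move the computation into $\Dbc(\proj^l)$:
\begin{equation*}
    \Hom_{\proj^n}(\ic{k},\ic{l}[r])\cong\Hom_{\proj^l}(\imath_{k,l,*}\ul{\kk}_{\proj^k}[k],\ul{\kk}_{\proj^l}[l+r]).
\end{equation*}
Then applying the adjunction $(\imath_{k,l,*},\imath_{k,l}^!)$ yields
\begin{equation*}
    \cong\Hom_{\proj^k}(\ul{\kk}_{\proj^k}[k],\imath_{k,l}^!\ul{\kk}_{\proj^l}[l+r]).
\end{equation*}
The key geometric input is that $\imath_{k,l}$ is a smooth closed embedding of complex codimension $l-k$ between smooth varieties, so comparing dualizing sheaves gives $\imath_{k,l}^!\ul{\kk}_{\proj^l}\cong\ul{\kk}_{\proj^k}[-2(l-k)]$. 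Plugging this in and using the defining identification of morphisms of the constant sheaf with cohomology, the right-hand side becomes $H^{r-(l-k)}(\proj^k)$, which is $H^{r-|k-l|}(\proj^k\cap\proj^l)$ since $\proj^k\cap\proj^l=\proj^k$ in this case.

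For the algebra structure in the case $k=l$, I would note that all the identifications above are natural in both variables and reduce the composition in $\End^*_{\proj^n}(\ic{k})$ to the composition in $\End^*_{\Db(\mathrm{Sh}(\proj^k))}(\ul{\kk}_{\proj^k})$, which is by definition the cup product on $H^*(\proj^k)$. The standard cohomology computation $H^*(\proj^k)\cong\kk[t]/(t^{k+1})$ with $\deg(t)=2$ finishes the argument.

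The main technical obstacle is verifying that the chain of isomorphisms is multiplicative when $k=l$; however, this follows formally from the functoriality of the adjunctions and the fact that the pushforward along a closed embedding is fully faithful and symmetric monoidal with the identifications used, so no genuinely new computation is required beyond the cohomology of $\proj^k$.
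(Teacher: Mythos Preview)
Your argument is correct and essentially matches the paper's proof: both reduce to the cohomology of the smaller projective space via the adjunctions for the closed inclusions and the identification $\imath_k^!\ic{l}\cong\ul{\kk}_{\proj^k}[2k-l]$ for $l\geq k$. The only cosmetic difference is that the paper handles the two cases $k\geq l$ and $l\geq k$ directly (using $\imath_l^*\ic{k}\cong\ul{\kk}_{\proj^l}[k]$ and $\imath_k^!\ic{l}\cong\ul{\kk}_{\proj^k}[2k-l]$, respectively), whereas you first invoke Verdier self-duality of $\ic{k}$ to reduce to a single case.
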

            \begin{proof}
                This is obvious from $\imath_l^*\ic{k}\cong\ul{\kk}_{\proj^l}[k]$ for $k\geq l$, respectively $\imath_k^!\ic{l}\cong\ul{\kk}_{\proj^k}[2k-l]$ for $l\geq k$, and the definition of cohomology.
            \end{proof}
            For the rest of the paper we want to fix non-zero morphisms $\epsilon^r_{k,l}\colon \ic{k}\to\ic{l}[r]$ in a way that is compatible with composition.
            This requires us to inductively fix the (co)units for the recollement adjunctions, as follows.

            Suppose we have already fixed the adjunction (co)units $\eta_l\colon \id_{\Dbc(\proj^{l+1})}\to\imath_{l,*}\imath_l^*$ and $\varepsilon_l\colon \imath_{l,*}\imath_l^!\to\id_{\Dbc(\proj^{l+1})}$ for the recollements corresponding to the strata $S^{l+1}$ with $l+1\leq k<n$, such that $\eta_l\varepsilon_l=\varepsilon_{l-1}\eta_{l-1}\colon \ic{l}\to\ic{l}[2]$ and $\verdier(\varepsilon_l)_X=(\eta_l)_{\verdier(X)}$ for $X\in\Dbc(\proj^{l+1})$, and that the composition $\ic{k}\longto{\eta_{k-1}}\ic{k-1}[1]\longto{\varepsilon_{k-1}}\ic{k}[2]$ is non-zero.
            
            The inclusion of the $(k+1)$-dimensional stratum $S_{k+1}\cong\aff^{k+1}$ provides the recollement
            \begin{equation*}
                \begin{tikzcd}[column sep=huge]
					\Dbc(\proj^k)\arrow{r}{\imath_{k,*}}
					&\Dbc(\proj^{k+1})\arrow{r}{\jmath_{k+1}^*}\arrow[bend right]{l}[swap]{\imath_k^*}\arrow[bend left]{l}{\imath_k^!}
					&\Dbc(\aff^{k+1})\arrow[bend right]{l}[swap]{\jmath_{k+1,!}}\arrow[bend left]{l}{\jmath_{k+1,*}}
				\end{tikzcd}
            \end{equation*}
            As part of the recollement data, there are the adjunction unit $\tilde{\eta}_k\colon \id_{\Dbc(\proj^{k+1})}\to \imath_{k,*}\imath_k^*$ and the counit $\tilde{\varepsilon}_k\colon \imath_{k,*}\imath_k^!\to\id_{\Dbc(\proj^{k+1})}$.
            These can be chosen to be Verdier-dual to each other in the sense that $\verdier((\tilde{\varepsilon}_k)_{\verdier(X)})=(\tilde{\eta}_k)_X$ for $X\in\Dbc(\proj^{k+1})$.

            The proof of the following lemma is based on conversations with Jon Woolf.
            \begin{lemma}\label{counitcomp} Let $0\leq k\leq n-1$.\leavevmode
                \begin{enumerate}
                    \item The composition $\ic{k}\longto{\tilde{\varepsilon}_k}\ic{k+1}[1]\longto{\tilde{\eta}_k}\ic{k}[2]$ is non-zero for $k>0$.
                    \item The composition $\ic{k+1}\longto{\tilde{\eta}_k}\ic{k}[1]\longto{\tilde{\varepsilon}_k}\ic{k+1}[2]$
                        is non-zero.
                \end{enumerate}
            \end{lemma}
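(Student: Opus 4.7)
The plan is to prove each part by exploiting the one-dimensional Hom spaces from \cref{maps_between_simples}, using a suitable recollement triangle as the source of a long exact sequence in which a nearby term vanishes. This will force a certain post-composition map to be injective, and the claim will then follow because the Hom space where the inputs $\tilde{\varepsilon}_k$ or $\tilde{\eta}_k$ live is spanned by that very non-zero element.

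For (1), I would consider the composition series triangle of $\stan{k+1}$, which by \cref{delta_comp_series} reads $\ic{k}\to\stan{k+1}\to\ic{k+1}\xrightarrow{\tilde{\eta}_k}\ic{k}[1]$. Rotating and applying $\Hom_{\proj^n}(\ic{k},-)$ yields the exact fragment
\begin{equation*}
    \Hom(\ic{k},\stan{k+1}[1])\to\Hom(\ic{k},\ic{k+1}[1])\xrightarrow{\tilde{\eta}_k\circ-}\Hom(\ic{k},\ic{k}[2]),
\end{equation*}
in which the second map sends $\tilde{\varepsilon}_k$ to the composition in (1). The adjunction $\imath_{k,*}\dashv\imath_k^!$ combined with $\imath_k^!\jmath_{k+1,!}=0$ (since $\proj^k$ and $\aff^{k+1}$ are disjoint in $\proj^{k+1}$) gives $\Hom(\ic{k},\stan{k+1}[r])=0$ for all $r$, so the second map is injective. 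Since both Hom spaces on the right are one-dimensional for $k\geq 1$ by \cref{maps_between_simples} and $\tilde{\varepsilon}_k\neq 0$, the composition in (1) is non-zero.

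The argument for (2) is parallel, starting from the costandard triangle $\ic{k+1}\to\costan{k+1}\to\ic{k}\xrightarrow{\tilde{\varepsilon}_k[1]}\ic{k+1}[1]$. After rotation and shifting, applying $\Hom_{\proj^n}(\ic{k+1},-)$ produces
\begin{equation*}
    \Hom(\ic{k+1},\costan{k+1}[1])\to\Hom(\ic{k+1},\ic{k}[1])\xrightarrow{\tilde{\varepsilon}_k\circ-}\Hom(\ic{k+1},\ic{k+1}[2]),
\end{equation*}
and $\tilde{\eta}_k$ maps to the composition in (2). The adjunction $\jmath_{k+1}^*\dashv\jmath_{k+1,*}$ together with the contractibility of $\aff^{k+1}$ gives $\Hom(\ic{k+1},\costan{k+1}[1])\cong H^1(\aff^{k+1})=0$, so the second map is injective; since $\tilde{\eta}_k\neq 0$ the claim follows.

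I do not anticipate any serious obstacle: the argument is mostly bookkeeping with shifts, combined with two elementary vanishing computations that follow immediately from the adjunctions and disjoint-support properties of the recollement. As a sanity check, under the identification $\ic{k}\cong\imath_{k,*}\ul{\kk}_{\proj^k}[k]$ the composition in (1) corresponds, after stripping shifts, to the Gysin map followed by restriction, i.e., to cup product with the Euler class of the normal bundle $\mathcal{O}_{\proj^k}(1)$ of $\proj^k\subseteq\proj^{k+1}$, while the composition in (2) corresponds to restriction followed by the Gysin map, i.e., to cup product with the Poincar\'e dual of $[\proj^k]\subseteq\proj^{k+1}$; in both cases this is the hyperplane class in the appropriate projective cohomology, which is manifestly non-zero.
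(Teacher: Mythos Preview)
Your argument for part~(2) is correct and is in fact more direct than the paper's proof: you use the long exact sequence from the costandard triangle together with the easy vanishing $\Hom_{\proj^n}(\ic{k+1},\costan{k+1}[1])\cong H^1(\aff^{k+1})=0$, whereas the paper pushes everything forward to a point and reads off the answer from the cohomology of $\proj^{k+1}$ and $\proj^k$.

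Your argument for part~(1), however, has a genuine gap. The claimed identity $\imath_k^!\jmath_{k+1,!}=0$ is \emph{false}: in a recollement one has $\imath_k^*\jmath_{k+1,!}=0$ and $\imath_k^!\jmath_{k+1,*}=0$, but $\imath_k^!\jmath_{k+1,!}$ is typically non-zero. Concretely, $\Hom_{\proj^n}(\ic{k},\stan{k+1})\neq 0$ because $\ic{k}$ is the socle of $\stan{k+1}$ by the composition series \cref{delta_comp_series}, contradicting your claim that this Hom vanishes in all degrees. The particular vanishing you actually need, $\Hom_{\proj^n}(\ic{k},\stan{k+1}[1])=0$, \emph{is} true (it is a special case of \cref{mor_simple_stand}), but in the paper's logical order that lemma is proved \emph{after} \cref{counitcomp} and its proof uses the composition relations among the $\epsilon^r_{k,l}$ that \cref{counitcomp} is meant to establish; so invoking it here would be circular. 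I do not see an elementary non-circular way to obtain this vanishing by adjunction alone: the natural attempts (via $\imath_k^!\stan{k+1}$, via $\imath_k^*\costan{k+1}$, or by Verdier-dualizing) all reduce back to the composition you are trying to show is non-zero. Your concluding geometric ``sanity check''---identifying the composition with cup product by the hyperplane class---is essentially the content of the paper's proof, which makes this precise by applying $a_{\proj^{k+1},*}$ and tracking which direct summands of $a_{\proj^{k+1},*}\ic{k}$ and $a_{\proj^{k+1},*}\ic{k+1}$ are identified by $a_{\proj^{k+1},*}(\tilde{\eta}_k)$ and $a_{\proj^{k+1},*}(\tilde{\varepsilon}_k)$.
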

            \begin{proof}
                To see that the compositions are non-zero, we apply $a_{\proj^{k+1},*}$ to them, where $a_{\proj^{k+1}}\colon \proj^{k+1}\to\pt$.
                We have $a_{\proj^{k+1},*}\ic{k}\cong \bigoplus_{i=0}^k \ul{\kk}_\pt[-k+2i]$ since
                \begin{equation*}
                    \begin{split}
                        \Hom_\pt(\ul{\kk}_\pt,a_{\proj^{k+1},*}\ic{k}[r])&\cong\Hom_{\proj^{k+1}}(\ul{\kk}_{\proj^{k+1}},\imath_{k,*}\ul{\kk}_{\proj^k}[k+r])\\
                        &\cong\begin{cases}
                            \kk&\text{if $0\leq k+r\leq 2k$ and $k+r$ even},\\
                            0&\text{else},
                        \end{cases}
                    \end{split}
                \end{equation*}
                and analogously we get $a_{\proj^{k+1},*}\ic{k+1}\cong\bigoplus_{i=0}^{k+1} \ul{\kk}_\pt[-k-1+2i]$.
                Similar computations show that $a_{\proj^{k+1},*}\costan{k+1}\cong\ul{\kk}_\pt[k+1]$ and $a_{\proj^{k+1},*}\stan{k+1}\cong\ul{\kk}_\pt[-k-1]$ (for the latter, use $a_{\proj^{k+1},*}=a_{\proj^{k+1},!}$).
                
                Hence applying $a_{\proj^{k+1},*}$ to the triangles defining $\costan{k+1}[1]$ and $\stan{k+1}[2]$ yields triangles
                \begin{gather*}
                    \bigoplus_{i=0}^k \ul{\kk}_\pt[-k+2i]\longto{a_{\proj^{k+1},*}(\tilde{\varepsilon}_k)} \bigoplus_{i=0}^{k+1} \ul{\kk}_\pt[-k+2i]\to\ul{\kk}_\pt[k+2]\to\bigoplus_{i=0}^k \ul{\kk}_\pt[-k+2i+1],\\
                    \bigoplus_{i=0}^{k+1} \ul{\kk}_\pt[-k+2i]\longto{a_{\proj^{k+1},*}(\tilde{\eta}_k)}\bigoplus_{i=0}^k \ul{\kk}_\pt[-k+2i+2]\to\ul{\kk}_\pt[1-k]\to \bigoplus_{i=0}^{k+1} \ul{\kk}_\pt[-k+2i+1].
                \end{gather*}
                Since $\Hom_\pt(\ul{\kk}_\pt,\ul{\kk}_\pt[r])=0$ for $r\neq 0$, the morphisms $a_{\proj^{k+1},*}(\tilde{\varepsilon}_k)$ and $a_{\proj^{k+1},*}(\tilde{\eta}_k)$ must identify all the summands occuring in both their source and target.
                
                It follows that the composition $a_{\proj^{k+1},*}(\ic{k}\longto{\tilde{\varepsilon}_k}\ic{k+1}[1]\longto{\tilde{\eta}_k}\ic{k}[2])$ identifies all the summands occuring in both $a_{\proj^{k+1},*}\ic{k}$ and $a_{\proj^{k+1},*}\ic{k}[2]$, and such a common summand exists if and only if $k>0$.
                
                For the other composition, one sees similarly that $a_{\proj^{k+1},*}(\ic{k+1}\longto{\tilde{\eta}_k}\ic{k}[1]\longto{\tilde{\varepsilon}_k}\ic{k+1}[2])$ identifies all the summands occuring in both $a_{\proj^{k+1},*}\ic{k+1}$ and $a_{\proj^{k+1},*}\ic{k+1}[2]$, and thus is non-zero.\qedhere
            \end{proof}
            In particular, since $\Hom_{\proj^n}(\ic{k},\ic{k}[2])$ is $1$-dimensional by \cref{maps_between_simples}, we have $\tilde{\eta}_k\tilde{\varepsilon}_k=\lambda\varepsilon_{k-1}\eta_{k-1}$ for some scalar $\lambda\neq 0$.
            Set $\eta_k=\frac{1}{\sqrt{\lambda}}\tilde{\eta}_k$ and $\varepsilon_k=\frac{1}{\sqrt{\lambda}}\tilde{\varepsilon}_k$.
            These are again adjunction (co)units for the recollement for gluing the ($k+1$)-dimensional stratum, and rescaling both by the same factor ensures that the new (co)units are still Verdier-dual to each other.
            Note that to ensure that the recollement triangles are isomorphic to those obtained from the original (co)units, one also has to rescale the (co)units for the adjunctions between $\jmath_{k+1,!}$, $\jmath_{k+1}^*$ and $\jmath_{k+1,*}$.
            
            By construction, the square
            \begin{equation}
                \label{coherencerelation}
                \begin{tikzcd}
                    \ic{k}\arrow{r}{\eta_{k-1}}\arrow{d}[swap]{\varepsilon_k}
                    &\ic{k-1}[1]\arrow{d}{\varepsilon_{k-1}}\\
                    \ic{k+1}[1]\arrow{r}{\eta_k}
                    &\ic{k}[2]
                \end{tikzcd}
            \end{equation}
            now commutes, as desired.
            Moreover, the composition $\ic{k}\longto{\varepsilon_k}\ic{k+1}[1]\longto{\eta_k}\ic{k}[2]$ is still non-zero, which completes the induction.

            From now on, we assume the (co)units $\eta_k$, $\varepsilon_k$ of the recollement adjunctions are fixed for all $k$, and make the square \cref{coherencerelation} commute.
            With this data, we can now fix the desired morphisms $\epsilon^r_{k,l}\colon \ic{k}\to\ic{l}[r]$ spanning $\Hom_{\proj^n}(\ic{k},\ic{l}[r])\neq 0$ (with $|k-l|\leq r\leq |k-l|+2\min(k,l)$ and $r-|k-l|$ even), as follows.
            
            Set $\epsilon^1_{k,k-1}=\eta_{k-1}\colon \ic{k}\to\ic{k-1}[1]$ and $\epsilon^1_{k,k+1}=\varepsilon_k\colon\ic{k}\to\ic{k+1}[1]$.
            Furthermore, for $k>0$ let $\epsilon^2_{k,k}=\epsilon^1_{k-1,k}\epsilon^1_{k,k-1}$, which is a generator of $\End_{\proj^n}(\ic{k})$ by \cref{maps_between_simples,counitcomp}.
            The proof of \cref{maps_between_simples} shows that $\Hom_{\proj^n}(\ic{k},\ic{l}[r])$ is spanned by the composition
            \begin{equation*}
                \ic{k}\longto{\epsilon^1_{k,k-1}}\ic{k-1}[1]\longto{\epsilon^1_{k-1,k-2}}\dots\longto{\epsilon^1_{l+1,l}}\ic{l}[k-l]\longto{(\epsilon^2_{l,l})^{(r-k+l)/2}}\ic{l}[r]
            \end{equation*}
            for $l\leq k$, and by 
            \begin{equation*}
                \ic{k}\longto{(\epsilon^2_{k,k})^{(r+k-l)/2}}\ic{k}[r+k-l]\longto{\epsilon^1_{k,k+1}}\ic{k+1}[r+k-l+1]\longto{\epsilon^1_{k+1,k+2}}\dots\longto{\epsilon^1_{l-1,l}}\ic{l}[r]
            \end{equation*}
            for $l\geq k$.
            We write $\epsilon^r_{k,l}\colon\ic{k}\to\ic{l}[r]$ for these compositions.
            Note that $\verdier(\epsilon^r_{k,l})=\epsilon^r_{l,k}$ since $\verdier(\epsilon^1_{k,k-1})=\epsilon^1_{k-1,k}$.
            
            As an immediate consequence of the relation \cref{coherencerelation}, in fact any non-zero composition of the morphisms $\epsilon^1_{i,i\pm 1}$ between $\ic{k}$ and $\ic{l}[r]$ yields the same morphism.
            This shows that $\epsilon^s_{l,m}\epsilon^r_{k,l}=\epsilon^{r+s}_{k,m}$ if $\Hom_{\proj^n}(\ic{k},\ic{m}[r+s])\neq 0$.
            
            The above construction also yields the following explicit description of the algebra $\bigoplus_{k,l}\Hom_{\proj^n}^*(\ic{k},\ic{l})$, as obtained in \cite[Proof of Prop.~2.9]{MR1862802}.
            \begin{proposition}
                \label{ICextalgebra}
                There is an isomorphism of graded algebras $\bigoplus_{k,l}\Hom_{\proj^n}^*(\ic{k},\ic{l})\cong E_n$, where
                \begin{equation*}
                    E_n=\kk\left(\begin{tikzcd}[bend angle=20]
                        \makebox(0.6cm,0.3cm)[c]{$0$}\arrow[bend left]{r}{\epsilon_{0,1}}&
                        \makebox(0.6cm,0.3cm)[c]{$1$}\arrow[bend left]{r}{\epsilon_{1,2}}\arrow[bend left]{l}{\epsilon_{1,0}}&
                        \makebox(0.6cm,0.3cm)[c]{$\dots$}\arrow[bend left]{r}{\epsilon_{n-2,n-1}}\arrow[bend left]{l}{\epsilon_{2,1}}&
                        \makebox(0.6cm,0.3cm)[c]{$n-1$}\arrow[bend left]{r}{\epsilon_{n-1,n}}\arrow[bend left]{l}{\epsilon_{n-1,n-2}}&
                        \makebox(0.6cm,0.3cm)[c]{$n$}\arrow[bend left]{l}{\epsilon_{n,n-1}}
                    \end{tikzcd}\right)/
                    \left(\begin{smallmatrix}
                        \epsilon_{1,0}\epsilon_{0,1}\\
                        \epsilon_{k+1,k}\epsilon_{k,k+1}-\epsilon_{k-1,k}\epsilon_{k,k-1}
                    \end{smallmatrix}\right)
                \end{equation*}
                with $\deg(\epsilon_{k,k\pm 1})=1$.
            \end{proposition}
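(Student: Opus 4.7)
The plan is to exhibit a graded algebra homomorphism $\Phi\colon E_n\to\bigoplus_{k,l}\Hom^*_{\proj^n}(\ic{k},\ic{l})$ sending $\epsilon_{k,k\pm 1}\mapsto\epsilon^1_{k,k\pm 1}$ and to verify it is an isomorphism by separately checking surjectivity and a dimension match.

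First I would check that $\Phi$ is well-defined, i.e.\ that the defining relations of $E_n$ are sent to zero. The relation $\epsilon_{1,0}\epsilon_{0,1}\mapsto 0$ is immediate from \cref{maps_between_simples}, since $\Hom_{\proj^n}(\ic{0},\ic{0}[2])\cong H^2(\proj^0)=0$. The commutation relation $\epsilon_{k+1,k}\epsilon_{k,k+1}=\epsilon_{k-1,k}\epsilon_{k,k-1}$ translates into $\epsilon^1_{k+1,k}\epsilon^1_{k,k+1}=\epsilon^1_{k-1,k}\epsilon^1_{k,k-1}$, which is exactly the commutativity of the square \cref{coherencerelation} that we engineered by the rescaling of the recollement units and counits.

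Surjectivity is then built into the construction: \cref{maps_between_simples} shows that for each non-zero $\Hom_{\proj^n}(\ic{k},\ic{l}[r])$ the space is one-dimensional, and by the discussion following \cref{counitcomp} a spanning morphism $\epsilon^r_{k,l}$ is obtained as an explicit product of the generators $\epsilon^1_{i,i\pm 1}$, hence lies in the image of $\Phi$.

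Injectivity I would establish by a graded dimension count. On the right-hand side, \cref{maps_between_simples} gives $\dim\Hom_{\proj^n}(\ic{k},\ic{l}[r])=1$ precisely when $|k-l|\leq r\leq |k-l|+2\min(k,l)$ and $r\equiv|k-l|\pmod 2$, and $0$ otherwise. For $E_n$ I would argue that the commutation relation allows one to normalise every path from $k$ to $l$ into a canonical zigzag form (say, first go up to a chosen maximal vertex $m$ and then down to $l$, with the excess length absorbed into a single zigzag), so that any two paths of the same length from $k$ to $l$ that stay strictly positive are proportional; the relation $\epsilon_{1,0}\epsilon_{0,1}=0$ then forces a path to vanish once it would need to zigzag through $0$, bounding the admissible excess by $2\min(k,l)$. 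Matching the list of admissible degrees on both sides gives equality of graded dimensions in each bidegree, and since $\Phi$ is already surjective this forces it to be an isomorphism.

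The only genuinely delicate step is the combinatorial normal-form argument for paths in $E_n$, since one must show that the commutation relations really do identify all paths of a given length between two fixed vertices (up to the zero relation at the boundary). I would carry this out by induction on path length, using the relation $\epsilon_{k+1,k}\epsilon_{k,k+1}=\epsilon_{k-1,k}\epsilon_{k,k-1}$ repeatedly to push zigzags towards the vertex $n$, and then iterating $(\epsilon_{k+1,k}\epsilon_{k,k+1})^{k+1}=0$ (obtained by pushing a zigzag all the way to $0$) to discard paths that exceed the allowed number of zigzags.
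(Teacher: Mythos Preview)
Your proposal is correct and follows essentially the same strategy as the paper: define $\Phi$ on generators, verify the relations via $\Hom_{\proj^n}(\ic{0},\ic{0}[2])=0$ and the coherence square \cref{coherencerelation}, obtain surjectivity from the explicit spanning morphisms $\epsilon^r_{k,l}$, and conclude by a graded dimension comparison. The only difference is that the paper compresses the last step into the sentence ``one then easily checks that the graded dimensions of both algebras agree,'' whereas you spell out the path normal-form argument for $E_n$; this extra detail is fine (and your outline of pushing zigzags using $\epsilon_{k+1,k}\epsilon_{k,k+1}=\epsilon_{k-1,k}\epsilon_{k,k-1}$ and killing overlong paths via iterated use of $\epsilon_{1,0}\epsilon_{0,1}=0$ is the standard way to do it), though note that the relations give exact equalities of paths rather than mere proportionality.
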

            \begin{proof}
                The isomorphism $E_n\to\bigoplus_{k,l}\Hom_{\proj^n}^*(\ic{k},\ic{l})$ is given by $\epsilon_{k,k\pm 1}\mapsto\epsilon^1_{k,k\pm 1}$, which is well-defined since $\Hom_{\proj^n}(\ic{0},\ic{0}[2])=0$ and the square \cref{coherencerelation} commutes for all $k$.
                Surjectivity follows from the proof of \cref{maps_between_simples} and \cref{counitcomp}.
                One then easily checks that the graded dimensions of both algebras agree.
            \end{proof}
            \begin{remark}\label{ICdegree2end}
                For $0<k<n$ we moreover have $\epsilon^2_{k,k}=\imath_k^!(\epsilon^1_{k+1,k})$.
                Indeed, naturality of the recollement triangles yields the commutative diagram
                \begin{equation*}
                    \begin{tikzcd}
                        \ic{k}[-1]\arrow{r}{\epsilon^1_{k,k+1}}\arrow{d}[swap]{\imath_k^!(\epsilon^1_{k+1,k})}
                        &\ic{k+1}\arrow{r}\arrow{d}{\epsilon^1_{k+1,k}}
                        &\costan{k+1}\arrow{r}\arrow{d}
                        &\ic{k}\arrow{d}\\
                        \ic{k}[1]\arrow{r}{\id}
                        &\ic{k}[1]\arrow{r}
                        &0\arrow{r}
                        &\ic{k}[2]\rlap{.}
                    \end{tikzcd}
                \end{equation*}
                From this it also follows that $\epsilon^2_{k,k}=\imath_k^!(\epsilon^2_{k+1,k+1})$:
                we have $\epsilon^2_{k,k}=\epsilon^1_{k+1,k}\epsilon^1_{k,k+1}$ and $\imath_k^!(\epsilon^1_{k,k+1})=\id$, as $\epsilon^1_{k,k+1}$ is the counit for the adjunction between $\imath_{k,*}$ and $\imath_k^!$.
            \end{remark}
        \subsection{Morphisms between standards and simples}\label{stdsimplemorph}
            Morphisms from the standard objects to IC sheaves, and dually from IC sheaves to costandard objects, are easily calculated by the recollement adjunctions:
            \begin{lemma}\label{topIC_to_lower_nabla}
                We have
                \begin{equation*}
                   \Hom_{\proj^n}(\stan{k},\ic{l}[r])\cong\Hom_{\proj^n}(\ic{l},\costan{k}[r])\cong\begin{cases}
                        \kk&\text{if $l\geq k$ and $r=l-k$},\\
                        0&\text{else}.
                    \end{cases}
                \end{equation*}
            \end{lemma}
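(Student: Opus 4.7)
The plan is to compute one of the two Hom spaces directly using the adjunction coming from the recollement, then derive the other from Verdier duality.

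First, I would handle the costandard side via the adjunction $\jmath_k^* \dashv \jmath_{k,*}$. Since $\costan{k} = \jmath_{k,*}\ul{\kk}_{\aff^k}[k]$, this gives
\begin{equation*}
    \Hom_{\proj^n}(\ic{l},\costan{k}[r]) \cong \Hom_{\aff^k}(\jmath_k^* \ic{l},\ul{\kk}_{\aff^k}[k+r]).
\end{equation*}
Next I would compute $\jmath_k^*\ic{l} = \jmath_k^*\imath_{l,*}\ul{\kk}_{\proj^l}[l]$ by cases. If $k>l$, then the stratum $S_k=\aff^k$ is disjoint from $\proj^l$, so the restriction is zero and the Hom vanishes. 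If $k\leq l$, then $S_k\subseteq\proj^l$, and $\jmath_k^*\ic{l} \cong \ul{\kk}_{\aff^k}[l]$. The Hom space then becomes $H^{r-(l-k)}(\aff^k)$, and since $\aff^k$ is contractible this is $\kk$ when $r=l-k$ and $0$ otherwise.

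To identify the two Hom spaces, I would then invoke Verdier duality $\verdier\colon\Dbc(\proj^n)\to\Dbc(\proj^n)^\op$, which is a contravariant equivalence satisfying $\verdier(\ic{l})\cong\ic{l}$ and $\verdier(\stan{k})\cong\costan{k}$ (the latter noted after the definition of the costandard objects in \cref{SimpleStdProj}). Applying $\verdier$ yields
\begin{equation*}
    \Hom_{\proj^n}(\stan{k},\ic{l}[r]) \cong \Hom_{\proj^n}(\verdier(\ic{l}[r]),\verdier(\stan{k})) \cong \Hom_{\proj^n}(\ic{l},\costan{k}[r]),
\end{equation*}
which gives the first isomorphism and reduces the entire statement to the computation already carried out.

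There is no real obstacle here; the only point that requires care is making sure one uses the correct adjoint for the locally closed immersion $\jmath_k$. Using the open-closed decomposition $\jmath_k = \imath_k\circ\tilde\jmath_k$ with $\tilde\jmath_k\colon\aff^k\hookto\proj^k$ open, the right adjoint of $\jmath_{k,*}$ on the costandard side is simply $\jmath_k^*$, which avoids the shift by $2(l-k)$ one would otherwise encounter if one tried the dual calculation via $\jmath_{k,!}\dashv\jmath_k^!$. This is why the costandard side is more convenient to compute directly, with the standard side obtained by Verdier duality.
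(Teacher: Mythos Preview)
Your proof is correct and is essentially the Verdier-dual of the paper's argument: the paper computes $\Hom_{\proj^n}(\stan{k},\ic{l}[r])$ directly via the adjunction for $\jmath_{k,!}$ (incurring the shift $\imath_k^!\ic{l}\cong\ul{\kk}_{\proj^k}[2k-l]$ for $l\geq k$) and then invokes Verdier duality for the costandard side, whereas you compute the costandard side first via $\jmath_k^*\dashv\jmath_{k,*}$. Both routes reduce to $H^*(\aff^k)$ and are interchangeable; your choice does avoid the codimension shift, as you note.
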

            \begin{proof}
                By Verdier duality it suffices to compute $\Hom_{\proj^n}(\stan{k},\ic{l}[r])$.
                For $l\geq k$ we have
                \begin{equation*}
                    \begin{split}
                        \Hom_{\proj^n}(\stan{k},\ic{l}[r])&\cong\Hom_{\proj^k}(\jmath_{k,!}\ul{\kk}_{\aff^k}[k],\ul{\kk}_{\proj^k}[2k+r-l])\\
                        &\cong\Hom_{\aff^k}(\ul{\kk}_{\aff^k},\ul{\kk}_{\aff^k}[k+r-l]),
                    \end{split}
                \end{equation*}
                and from this the claim follows.
                The case $l<k$ is similar but easier, as $\jmath_k^*(\ic{l})=0$.
            \end{proof}
            By the proof of \cref{topIC_to_lower_nabla}, for $l\geq k$ there is a canonical non-zero morphism $\mu_{k,l}\colon\stan{k}\to\ic{l}[l-k]$ corresponding to $\id_{\aff^k}$ under the adjunctions.
            In particular, for $k=l$ these are the (co)units from the recollement triangles \cref{delta_comp_series} defining $\stan{k}$.
            Moreover, the proof also shows that $\mu_{k,l}$ is the unique morphisms making the diagram
            \begin{equation*}
                \begin{tikzcd}
                    \stan{k}\arrow{d}[swap]{\mu_{k,k}}\arrow[dashed]{rd}{\mu_{k,l}}&\\
                    \ic{k}\arrow{r}{\epsilon^{l-k}_{k,l}}
                    &\ic{l}[l-k]
                \end{tikzcd}
            \end{equation*}
            commute.
            \begin{lemma}\label{DeltaHoms}
                For $0\leq k,l\leq n$ we have
                \begin{equation*}
                    \Hom_{\proj^n}(\stan{k},\stan{l}[r])\cong\Hom_{\proj^n}(\costan{l},\costan{k}[r])\cong\begin{cases}
                        \kk&\text{if $l>k$ and $r\in\{l-k-1,l-k\}$},\\
                        \kk&\text{if $l=k$ and $r=0$},\\
                        0&\text{otherwise}.
                    \end{cases}
                \end{equation*}
            \end{lemma}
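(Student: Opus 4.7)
The plan is to reduce the two claims to a single computation via Verdier duality, which identifies $\Hom_{\proj^n}(\stan{k},\stan{l}[r])$ with $\Hom_{\proj^n}(\verdier\stan{l},\verdier\stan{k}[r])=\Hom_{\proj^n}(\costan{l},\costan{k}[r])$, so it suffices to compute the first space. To do so, I would apply the functor $\Hom_{\proj^n}(\stan{k},-)$ to the recollement triangle
\begin{equation*}
    \ic{l-1}\to\stan{l}\to\ic{l}\to\ic{l-1}[1]
\end{equation*}
defining $\stan{l}$ for $l\geq 1$ (the base case $l=0$, where $\stan{0}=\ic{0}$, follows directly from \cref{topIC_to_lower_nabla}). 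The resulting long exact sequence sandwiches $\Hom_{\proj^n}(\stan{k},\stan{l}[r])$ between Hom spaces from $\stan{k}$ to shifts of $\ic{l-1}$ and $\ic{l}$, all of which are completely described by \cref{topIC_to_lower_nabla}: such a space is at most one-dimensional, and is nonzero precisely in a single cohomological degree (determined by the index of the target IC sheaf).

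The case analysis is then immediate. If $l<k$, both $\ic{l-1}$ and $\ic{l}$ have index strictly less than $k$, so all four relevant terms in the long exact sequence vanish, forcing $\Hom_{\proj^n}(\stan{k},\stan{l}[r])=0$. If $l=k$, then $\ic{l-1}$ contributes nothing and $\ic{l}=\ic{k}$ contributes a $\kk$ only in degree $r=0$, recovering the expected $\kk\cdot\id_{\stan{k}}$. In the interesting case $l>k$, the term $\Hom_{\proj^n}(\stan{k},\ic{l}[r])$ is nonzero only at $r=l-k$ and $\Hom_{\proj^n}(\stan{k},\ic{l-1}[r])$ only at $r=l-1-k$; checking that the two adjacent terms in each of the degrees $r=l-k$ and $r=l-k-1$ vanish then yields isomorphisms $\Hom_{\proj^n}(\stan{k},\stan{l}[l-k])\cong\Hom_{\proj^n}(\stan{k},\ic{l}[l-k])\cong\kk$ and $\Hom_{\proj^n}(\stan{k},\stan{l}[l-k-1])\cong\Hom_{\proj^n}(\stan{k},\ic{l-1}[l-k-1])\cong\kk$, while in all other degrees the middle term is squeezed between zeros.

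There is no genuine obstacle here; the argument is pure bookkeeping in a four-term long exact sequence, and every ingredient is already supplied by \cref{topIC_to_lower_nabla} together with Verdier duality applied to $\stan{k}$, $\costan{k}$, and the recollement triangle. The only point requiring mild care is the isolated base case $l=0$ (to avoid referring to $\ic{-1}$), which is handled by noting $\stan{0}\cong\ic{0}$.
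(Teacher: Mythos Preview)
Your proposal is correct and follows exactly the same approach as the paper: reduce to $\Hom_{\proj^n}(\stan{k},\stan{l}[r])$ by Verdier duality, then apply $\Hom_{\proj^n}(\stan{k},-)$ to the recollement triangle \cref{delta_comp_series} for $\stan{l}$ and read off the answer from \cref{topIC_to_lower_nabla}. The paper's proof is simply a two-line summary of your argument; your explicit case analysis and the separate treatment of $l=0$ only make the bookkeeping visible.
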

            \begin{proof}
                By Verdier duality it suffices to compute $\Hom_{\proj^n}(\stan{k},\stan{l}[r])$.
                For this, the claim follows from \cref{topIC_to_lower_nabla} and the long exact sequence obtained by applying $\Hom_{\proj^n}(\stan{k},-)$ to the triangle \cref{delta_comp_series} defining $\stan{l}$.
            \end{proof}
            The proof of \cref{DeltaHoms} also yields the following descriptions of canonical morphisms $\delta^r_{k,l}\colon\stan{k}\to\stan{l}[r]$ spanning $\Hom_{\proj^n}(\stan{k},\stan{l}[r])$:
            \begin{itemize}
                \item For $r=l-k$, we define $\delta^{l-k}_{k,l}\colon\stan{k}\to\stan{l}[l-k]$ as the unique morphism making the diagram
                    \begin{equation*}
                        \begin{tikzcd}
                            \stan{k}\arrow[dashed]{r}{\delta^{l-k}_{k,l}}\arrow{rd}[swap]{\mu_{k,l}}
                            &\stan{l}[l-k]\arrow{d}{\mu_{l,l}}\\
                            &\ic{l}[l-k]
                        \end{tikzcd}
                    \end{equation*}
                    commute.
                \item For $r=l-k-1$, we define $\delta^{l-k-1}_{k,l}\colon\stan{k}\to\stan{l}[l-k-1]$ as the composition
                    \begin{equation*}
                        \stan{k}\longto{\mu_{k,l-1}}\ic{l-1}[l-1-k]\longto{\phi^0_{l-1,l}}\stan{l}[l-1-k],
                    \end{equation*}
                    where $\phi^0_{l-1,l}\colon \ic{l-1}\to\stan{l}$ is the morphism from the recollement triangle \cref{delta_comp_series} defining $\stan{l}$.
            \end{itemize}
            In particular, the morphisms $\delta_{k,k+1}\colon \stan{k}\to\stan{k+1}[1]$ can be used to define the indecomposable projective objects via the triangles \cref{delta_flags}.
        \subsection{Morphisms between simples and projectives}\label{subsec_projective_injectives}
            We start by computing the morphisms from simple perverse sheaves to standard objects.
            \begin{lemma}\label{mor_simple_stand}
            For $0\leq k,l\leq n$ and $r\in\Z$ we have
                \[
                    \Hom_{\proj^n}(\ic{l},\stan{k}[r])\cong \Hom_{\proj^n}(\costan{k},\ic{l}[r])\cong \begin{cases}
                        \kk&\text{if $r=k+l$},\\
                        \kk&\text{if $r=k-l-1$ and $l<k$},\\
                        0&\text{else}.
                    \end{cases}
                \]
            \end{lemma}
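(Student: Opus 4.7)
The plan is to use Verdier duality to identify the two Hom spaces, and then compute $\Hom_{\proj^n}(\ic{l},\stan{k}[r])$ from the long exact sequence associated to the recollement triangle \cref{delta_comp_series} defining $\stan{k}$.

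Since $\verdier$ is a contravariant autoequivalence of $\Dbc(\proj^n)$ with $\verdier(\ic{l})\cong\ic{l}$ and $\verdier(\stan{k})\cong\costan{k}$, it yields an isomorphism $\Hom_{\proj^n}(\ic{l},\stan{k}[r])\cong\Hom_{\proj^n}(\costan{k},\ic{l}[r])$. Hence it suffices to compute the former. Applying $\Hom_{\proj^n}(\ic{l},-)$ to the triangle $\ic{k-1}\to\stan{k}\to\ic{k}\longto{\epsilon^1_{k,k-1}}\ic{k-1}[1]$ produces a long exact sequence in which the terms $\Hom_{\proj^n}(\ic{l},\ic{k-1}[*])$ and $\Hom_{\proj^n}(\ic{l},\ic{k}[*])$ are each either $0$ or $\kk$ by \cref{maps_between_simples}, and the connecting maps are postcomposition with $\epsilon^1_{k,k-1}$.

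By \cref{ICextalgebra}, this postcomposition corresponds to right multiplication by $\epsilon_{k,k-1}$ in $E_n$; in particular, whenever both source and target are nonzero, it is an isomorphism of one-dimensional spaces. A case analysis on the parity of $r-|l-k|$ and on the range of $r$ with respect to \cref{maps_between_simples} then shows that the only values at which $\Hom_{\proj^n}(\ic{l},\stan{k}[r])$ survives are $r=k+l$ (for all $l$) and $r=k-l-1$ (when $l<k$): these are precisely the values for which one adjacent term in the long exact sequence is $\kk$ while its partner is forced to vanish for range reasons. In every remaining case, either both adjacent terms vanish or two successive connecting maps are isomorphisms of one-dimensional spaces, forcing $\Hom_{\proj^n}(\ic{l},\stan{k}[r])=0$.

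The main obstacle is the bookkeeping of ranges and parities; no new geometric input is required beyond \cref{maps_between_simples,ICextalgebra}.
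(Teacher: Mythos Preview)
Your proof is correct and follows essentially the same route as the paper: reduce by Verdier duality, apply $\Hom_{\proj^n}(\ic{l},-)$ to the recollement triangle \cref{delta_comp_series}, and use \cref{maps_between_simples} together with the composition relation $\epsilon^1_{k,k-1}\epsilon^r_{l,k}=\epsilon^{r+1}_{l,k-1}$ (which you package via \cref{ICextalgebra}) to see that the connecting maps are isomorphisms whenever source and target are both nonzero. The paper phrases the exceptional degrees slightly differently (as $r=l+k$ and, for $l<k$, $r=k-l-2$ for the connecting map), but the bookkeeping is the same.
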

            \begin{proof} 
            By Verdier duality it suffices to compute $\Hom_{\proj^n}(\ic{l},\stan{k}[r])$. Applying the functor $\Hom_{\proj^n}(\ic{l},-)$ to \cref{delta_comp_series} yields a long exact sequence
                \[
                    \ldots\to \Hom_{\proj^n}(\ic{l},\ic{k-1}[r]) \to \Hom_{\proj^n}(\ic{l},\stan{k}[r]) \to \Hom_{\proj^n}(\ic{l},\ic{k}[r])\to\ldots
                \]
                in which the left and right-hand side can be computed by \cref{maps_between_simples}, and these Hom spaces are spanned by $\epsilon^r_{l,k-1}$ and $\epsilon^r_{l,k}$, respectively.
                Since $\epsilon^{r+1}_{l,k-1}=\epsilon^1_{k,k-1}\epsilon^r_{l,k}$, the connecting morphisms $\Hom_{\proj^n}(\ic{l},\ic{k}[r])\to\Hom_{\proj^n}(\ic{l},\ic{k-1}[r+1])$ are isomorphisms unless $r=l+k$, or $l<k$ and $r=k-l-2$.
            \end{proof}
            The proof of \cref{mor_simple_stand} shows that $\Hom_{\proj^n}(\ic{l},\stan{k}[k+l])$ is spanned by the unique morphism $\phi^{l+k}_{l,k}\colon\ic{l}\to\stan{k}[k+l]$ making the diagram 
            \begin{equation*}
                \begin{tikzcd}
                    &\ic{l}\arrow{d}{\epsilon^{k+l}_{k,l}}\arrow[dashed]{ld}[swap]{\phi^{l+k}_{l,k}}\\
                    \stan{k}[k+l]\arrow{r}[swap]{\mu_{k,k}}
                    &\ic{k}[k+l]\rlap{.}
                \end{tikzcd}
            \end{equation*}
            commute.
            
            For $l<k$, the proof shows that $\Hom_{\proj^n}(\ic{l},\stan{k}[k-l-1])$ is spanned by the composition
            \begin{equation*}
                \phi^{k-l-1}_{l,k}\colon\ic{l}\longto{\epsilon^{k-l-1}_{l,k-1}}\ic{k-1}[k-l-1]\longto{\phi^0_{k-1,k}}\stan{k}[k-l-1],
            \end{equation*}
            where $\phi^0_{k-1,k}\colon \ic{k-1}\to\stan{k}$ is the morphism from the recollement triangle \cref{delta_comp_series}.
            The non-zero morphisms $\costan{k}\to\ic{l}[r]$ are defined by the dual diagrams.
            
            From \cref{mor_simple_stand} it also follows that all indecomposable projective perverse sheaves except $P_n$ are projective-injective:
            \begin{proposition}\label{proj_inj}
                For $0\leq k\leq n-1$ and $0\leq l\leq n$ we have
                \begin{equation*}
                    \Hom_{\proj^n}(\ic{l},P_k[r])\cong\begin{cases}
                        \kk&\text{if $l=k$ and $r=0$},\\
                        0&\text{else}.
                    \end{cases}
                \end{equation*}
                In particular, if $k<n$ then $P_k$ is the injective hull of $\ic{k}$ in $\perv{\proj^n}$.
            \end{proposition}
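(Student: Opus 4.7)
My plan is to apply $\Hom_{\proj^n}(\ic{l},-)$ to the defining triangle $\stan{k+1}\to P_k\to \stan{k}\overset{\delta_{k,k+1}}{\to}\stan{k+1}[1]$ of $P_k$ and to analyse the resulting long exact sequence. By \cref{mor_simple_stand}, the spaces $\Hom_{\proj^n}(\ic{l},\stan{k}[r])$ and $\Hom_{\proj^n}(\ic{l},\stan{k+1}[r+1])$ are at most one-dimensional, so the computation reduces to determining whether the connecting map $\delta_{k,k+1,*}$ is zero or an isomorphism at each relevant degree. Writing
\[
\dim\Hom_{\proj^n}(\ic{l},P_k[r])=\dim\coker(\delta_{k,k+1,*})_{r-1}+\dim\ker(\delta_{k,k+1,*})_r,
\]
the claim will follow once these connecting maps are understood.

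I would split the argument into the three cases $l>k$, $l=k$ and $l<k$. In the case $l>k$, only the degree $r=k+l$ admits non-zero source and target, and I would show non-vanishing of $\delta_{k,k+1,*}$ at this degree by postcomposing with $\mu_{k+1,k+1}$: the defining diagram for $\delta^1_{k,k+1}$ gives $\mu_{k+1,k+1}[1]\circ\delta_{k,k+1}=\mu_{k,k+1}=\epsilon^1_{k,k+1}\circ\mu_{k,k}$, and using the defining diagram of $\phi^{k+l}_{l,k}$ this computes the postcomposed image as $\epsilon^{k+l+1}_{l,k+1}$, which is non-zero by \cref{maps_between_simples}. Hence $\delta_{k,k+1,*}$ is an isomorphism, and thus $\Hom_{\proj^n}(\ic{l},P_k[r])=0$ for all $r$.

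In the case $l<k$, there are two relevant degrees, $r=k+l$ and $r=k-l-1$. The first is handled exactly as before. For the second, I would exploit the triangle $\ic{k-1}\overset{\phi^0_{k-1,k}}{\to}\stan{k}\overset{\mu_{k,k}}{\to}\ic{k}\overset{\epsilon^1_{k,k-1}}{\to}\ic{k-1}[1]$: applying $\Hom_{\proj^n}(-,\stan{k+1}[1])$ and using \cref{mor_simple_stand,DeltaHoms}, one shows that precomposition with $\phi^0_{k-1,k}$ induces an isomorphism $\Hom_{\proj^n}(\stan{k},\stan{k+1}[1])\cong\Hom_{\proj^n}(\ic{k-1},\stan{k+1}[1])$, so $\delta_{k,k+1}\circ\phi^0_{k-1,k}$ is a non-zero scalar multiple of $\phi^1_{k-1,k+1}$. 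Composing with $\epsilon^{k-l-1}_{l,k-1}$ and matching with the defining formula of $\phi^{k-l}_{l,k+1}$ shows that $\delta_{k,k+1,*}$ is again non-zero, hence an isomorphism. This yields $\Hom_{\proj^n}(\ic{l},P_k[r])=0$ for all $r$. The case $l=k$ combines both types: the degree $r=2k$ behaves as in the $l>k$ case (iso), but at $r=-1$ the source is zero while the target $\Hom_{\proj^n}(\ic{k},\stan{k+1})=\kk$ survives as the cokernel, contributing $\kk$ to $\Hom_{\proj^n}(\ic{k},P_k[0])$.

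For the injective hull statement, since $\Ext^1_{\perv{\proj^n}}(\ic{l},P_k)=\Hom_{\proj^n}(\ic{l},P_k[1])=0$ for every simple $\ic{l}$, the projective object $P_k$ is also injective; and because $\Hom_{\proj^n}(\ic{l},P_k)=\kk$ only for $l=k$, the socle of $P_k$ equals $\ic{k}$, so $P_k$ is the injective hull of $\ic{k}$. The main obstacle is the verification that the connecting maps are non-zero in all the relevant degrees; this is not automatic from the dimensions and requires the explicit bookkeeping of morphisms via the $\epsilon$, $\mu$ and $\phi$ families fixed in \cref{simplemorph,stdsimplemorph}.
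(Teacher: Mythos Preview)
Your proof is correct and follows essentially the same strategy as the paper: apply $\Hom_{\proj^n}(\ic{l},-)$ to the defining triangle of $P_k$, split into the three cases $l>k$, $l=k$, $l<k$, and verify in each relevant degree that the connecting map induced by $\delta_{k,k+1}$ is an isomorphism. The only notable difference is how you handle the degree $r=k-l-1$ in the case $l<k$: the paper invokes (TR3) to produce a morphism of triangles from the recollement triangle for $\stan{k}$ to that for $\stan{k+1}[1]$, obtaining the exact equality $\delta_{k,k+1}\circ\phi^0_{k-1,k}=\phi^0_{k,k+1}\circ\epsilon^1_{k-1,k}$, whereas you instead apply $\Hom_{\proj^n}(-,\stan{k+1}[1])$ to the triangle defining $\stan{k}$ and use a dimension count to conclude that $\delta_{k,k+1}\circ\phi^0_{k-1,k}$ is a non-zero scalar multiple of $\phi^1_{k-1,k+1}$. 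Your argument is slightly softer (you only get the identification up to scalar rather than on the nose), but this is all that is needed to show the connecting map is non-zero, so the conclusion is the same.
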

            \begin{proof}
                That $P_k$ is the injective hull of $\ic{k}$ in $\perv{\proj^n}$ is immediate from the first part, using $\Ext^1_{\perv{\proj^n}}(-,-)\cong\Hom_{\proj^n}(-,-[1])$.
                
                To compute $\Hom_{\proj^n}(\ic{l},P_k[r])$, we apply the functor $\Hom_{\proj^n}(\ic{l},- )$ to \cref{delta_flags} to get the long exact sequence
                \[
                    \ldots \to\Hom_{\proj^n}(\ic{l},\stan{k+1}[r])\to\Hom_{\proj^n}(\ic{l},P_k[r])\to\Hom_{\proj^n}(\ic{l},\stan{k}[r])\to\ldots
                \]
                We claim that $\delta_{k,k+1}\colon \stan{k}\to\stan{k+1}[r]$ induces isomorphisms in all degrees.
                \begin{enumerate}[label=Case~\arabic*:]
                    \item $l>k$.
                        By \cref{mor_simple_stand}, $\Hom_{\proj^n}(\ic{l},\stan{k+1}[r+1])$ and $\Hom_{\proj^n}(\ic{l},\stan{k}[r])$ are $1$-dimensional for $r=l+k$, and vanish otherwise.
                        By the construction of the morphisms spanning these Hom spaces, we have to show that in the diagram
                        \begin{equation*}
                            \begin{tikzcd}[column sep=small]
                                \stan{k}[k+l]\arrow[dashed]{rr}{\delta_{k,k+1}}\arrow{dd}[swap]{\mu_{k,k}}
                                &&\stan{k+1}[k+l+1]\arrow{dd}{\mu_{k+1,k+1}}\\
                                &\ic{l}\arrow{ld}[swap]{\epsilon^{k+l}_{l,k}}\arrow{rd}{\epsilon^{k+l+1}_{l,k+1}}\arrow[dashed]{lu}[swap]{\phi^{k+l}_{l,k}}\arrow[dashed]{ru}{\phi^{k+1+l}_{l,k+1}}\\
                                \ic{k}[k+l]\arrow{rr}[swap]{\epsilon^1_{k,k+1}}
                                &&\ic{k+1}[k+l+1]\rlap{.}
                            \end{tikzcd}
                        \end{equation*}
                        the triangle consisting of dashed arrows commutes.
                        By construction we have $\epsilon^{k+l+1}_{l,k+1}=\epsilon^1_{k,k+1}\epsilon^{k+l}_{l,k}$, and the outer square commutes by the definition of the morphism $\delta_{k,k+1}\colon\stan{k}\to\stan{k+1}[1]$.
                        An easy diagram chase then shows $\phi^{k+1+l}_{l,k+1}=\delta_{k,k+1}\phi^{k+l}_{l,k}$, as required.
                    \item $l=k$.
                        From \cref{mor_simple_stand} we know that $\Hom_{\proj^n}(\ic{l},\stan{k}[r])$ is $1$-dimensional for $r\in\{0,2k+1\}$ and $\Hom_{\proj^n}(\ic{l},\stan{k+1}[r+1])$ is $1$-dimensional for $r=2k$, and they vanish otherwise.
                        By the same argument as in Case~1, $\delta_{k,k+1}$ induces an isomorphism for $r=2k$, and it follows from the long exact sequence that $\Hom_{\proj^n}(\ic{k},P_k[r])$ is $1$-dimensional for $r=0$ and vanishes otherwise.
                    \item $l<k$.
                        From \cref{mor_simple_stand} we know that $\Hom_{\proj^n}(\ic{l},\stan{k}[r])$ is $1$-dimensional for $r\in\{k-l-1,k+l\}$ and $\Hom_{\proj^n}(\ic{l},\stan{k+1}[r+1])$ is $1$-dimensional for $r\in\{k-l,k+l+1\}$, and they vanish otherwise.
                        For $r=k+l$, that $\delta_{k,k+1}$ induces an isomorphism follows by the same argument as in Case~1.
    
                        For $r=k-l-1$, we need to show $\phi^{k-l}_{l,k+1}=\delta_{k,k+1}\phi^{k-l-1}_{l,k}$, which amounts to checking that the diagram
                        \begin{equation*}
                            \begin{tikzcd}[column sep=small]
                                \ic{k-1}[k-l-1]\arrow{rr}{\epsilon^1_{k-1,k}}\arrow{dd}[swap]{\phi^0_{k-1,k}}
                                &&\ic{k}[k-l]\arrow{dd}{\phi^0_{k,k+1}}\\
                                &\ic{l}\arrow{lu}[swap]{\epsilon^{k-l-1}_{l,k-1}}\arrow{ru}{\epsilon^{k-l}_{l,k}}\arrow[dashed]{ld}[swap]{\phi^{k-l-1}_{l,k}}\arrow[dashed]{rd}{\phi^{k-l}_{l,k+1}}&\\
                                \stan{k}[k-l-1]\arrow{rr}{\delta_{k,k+1}}
                                &&\stan{k+1}[k-l]
                            \end{tikzcd}
                        \end{equation*}
                        commutes.
                        By construction, we have $\epsilon^{k-l}_{l,k}=\epsilon^1_{k-1,k}\epsilon^{k-l-1}_{l,k-1}$, and the claim follows by a straightforward diagram chase provided the outer square commutes.
    
                        To see this, observe that $\epsilon^1_{k-1,k}\epsilon^1_{k,k-1}=\epsilon^1_{k+1,k}\epsilon^1_{k,k+1}$, the definition of $\delta_{k,k+1}$, and the axiom (TR3) give the commutative diagram
                        \begin{equation*}
                            \begin{tikzcd}[column sep=width("aaaaaaaa")]
                                \ic{k-1}\arrow{r}{\phi^0_{k-1,k}}\arrow{d}{\epsilon^1_{k-1,k}}
                                &\stan{k}\arrow{r}{\mu_{k,k}}\arrow{d}{\delta_{k,k+1}}
                                &\ic{k}\arrow{r}{\epsilon^1_{k,k-1}}\arrow{d}{\epsilon^1_{k,k+1}}
                                &\ic{k-1}[1]\arrow{d}{\epsilon^1_{k-1,k}}\\
                                \ic{k}[1]\arrow{r}{\phi^0_{k,k+1}}
                                &\stan{k+1}[1]\arrow{r}{\mu_{k+1,k+1}}
                                &\ic{k+1}[1]\arrow{r}{\epsilon^1_{k+1,k}}
                                &\ic{k}[2]\rlap{,}
                            \end{tikzcd}
                        \end{equation*}             
                       in which the left square is the desired commutative square (up to shift).\qedhere
                \end{enumerate}
            \end{proof}
        \subsection{\texorpdfstring{$\PP$}{P}-like simple perverse sheaves}
            By the explicit description of the simple perverse sheaves as constant sheaves on the stratum closures, it is obvious that they are $\PP^k$-like objects.
            However, only one of them is Calabi--Yau:
            \begin{proposition}\label{simples_P-obj}\leavevmode
                \begin{enumerate}
                    \item For $0\leq k\leq n$, the simple perverse sheaf $\ic{k}$ is a $\PP^k$-like object in $\Dbc(\proj^n)$.
                    \item The simple perverse sheaf $\ic{n}$ is a $\PP^n$-object in $\Dbc(\proj^n)$.
                    \item $\ic{k}$ is not Calabi--Yau in $\Dbc(\proj^n)$ if $k<n$.
                \end{enumerate}
            \end{proposition}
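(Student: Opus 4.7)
The plan is to handle the three parts in sequence, with part~(2) carrying essentially all the content.

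Part~(1) will fall out immediately from \cref{maps_between_simples} applied with $k=l$: that lemma already identifies $\End_{\proj^n}^*(\ic{k})\cong H^*(\proj^k)\cong\kk[t]/(t^{k+1})$ with $\deg(t)=2$, which is exactly the definition of $\PP^k$-like. Nothing else is needed here.

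For part~(2), (1) already delivers the $\PP^n$-like structure, and the finite-dimensionality of $\Hom_{\proj^n}^*(\ic{n},X)$ is automatic since $\proj^n$ is compact and every object of $\Dbc(\proj^n)$ is constructible. The real content is the $2n$-Calabi--Yau property. The cleanest argument is to invoke global Poincaré--Verdier duality on the smooth proper variety $\proj^n$: since $\omega_{\proj^n}\cong\ul{\kk}_{\proj^n}[2n]$ and $\ic{n}=\ul{\kk}_{\proj^n}[n]$, specializing the natural isomorphism $\Hom_{\proj^n}(F,G)\cong\Hom_{\proj^n}(G,F\otimes\omega_{\proj^n})^\vee$ to $F=\ic{n}$ collapses directly to $\Hom_{\proj^n}(\ic{n},X)\cong\Hom_{\proj^n}(X,\ic{n}[2n])^\vee$, naturally in $X$. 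A hands-on alternative would be to feed (1) into \cref{check_CY_on_proj} and verify non-degeneracy of the composition pairing only on shifts of the projective-injective perverse sheaves; this stays inside $\Dbc(\proj^n)$ but requires explicit knowledge of $\Hom_{\proj^n}(\ic{n},P_k[r])$, so the sheaf-theoretic route is much more transparent.

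For part~(3), by~(1) the top non-vanishing degree of $\End_{\proj^n}^*(\ic{k})$ is $2k$, so the only conceivable Calabi--Yau dimension is $d=2k$, and I only need to rule this out for $k<n$. The strategy is to exhibit a single test object on which the hypothetical isomorphism $\Hom_{\proj^n}(\ic{k},-)\cong\Hom_{\proj^n}(-,\ic{k}[2k])^\vee$ already fails. Taking $X=\ic{n}[n+k]$, \cref{maps_between_simples} gives $\Hom_{\proj^n}(\ic{k},\ic{n}[n+k])\cong H^{2k}(\proj^k)\cong\kk\neq 0$, whereas $\Hom_{\proj^n}(\ic{n},\ic{k}[k-n])\cong H^{2(k-n)}(\proj^k)=0$ because $k-n<0$. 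This asymmetry immediately obstructs the Calabi--Yau property.

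The only substantive step is the Verdier-duality appeal in part~(2); parts~(1) and~(3) are then direct consequences of the Hom computation in \cref{maps_between_simples}.
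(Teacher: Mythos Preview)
Your proof is correct in spirit, and parts~(1) and~(3) are fine, but the justification you give in part~(2) needs repair. The formula you invoke, $\Hom_{\proj^n}(F,G)\cong\Hom_{\proj^n}(G,F\otimes\omega_{\proj^n})^\vee$, is the shape of Grothendieck--Serre duality in the \emph{coherent} setting; in the constructible category it is false for general $F$ (indeed, since $\omega_{\proj^n}\cong\ul{\kk}_{\proj^n}[2n]$ this would say $[2n]$ is a Serre functor, contradicting part~(3) and the whole point of the paper). What does hold is global Verdier duality on the proper variety $\proj^n$: $R\Gamma(\proj^n,G)^\vee\cong R\Gamma(\proj^n,\verdier G)$ naturally in $G$. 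Since $\Hom_{\proj^n}(\ic{n},G)=H^{-n}R\Gamma(\proj^n,G)$ and $\Hom_{\proj^n}(G,\ic{n}[2n])=\Hom_{\proj^n}(G,\omega_{\proj^n}[n])=H^{-n}R\Gamma(\proj^n,\verdier G)$, dualizing gives exactly the $2n$-Calabi--Yau property for $\ic{n}$. So your strategy works, but only because $\ic{n}$ is the shifted constant sheaf; you should state it this way rather than via a general tensor formula.

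With that correction, your argument for~(2) and~(3) is genuinely different from the paper's and arguably cleaner. The paper instead feeds part~(1) into \cref{check_CY_on_proj} and verifies non-degeneracy of the composition pairing against each indecomposable projective $P_k$: for $k<n$ both sides vanish (using \cref{proj_inj} and the $\stan{}$-flag of $P_k$), while for $P_n=\stan{n}$ the pairing is checked explicitly via $\mu_{n,n}$ and $\phi^{2n}_{n,n}$. For part~(3) the paper again uses the projective-injectives $P_k=I_k$ as test objects, observing that $\Hom_{\proj^n}(P_k,\ic{k}[r])$ and $\Hom_{\proj^n}(\ic{k},P_k[2k-r])$ are supported in disjoint degrees. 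Your choice of $\ic{n}[n+k]$ as test object is more elementary, needing only \cref{maps_between_simples}. The paper's route has the advantage of staying entirely within the algebraic framework of \cref{check_CY_on_proj}, which is reused later; your route is shorter but imports a sheaf-theoretic duality statement from outside that framework.
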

            \begin{proof}\leavevmode
                \begin{enumerate}
                    \item Immediate by \cref{maps_between_simples}.
                    \item Since $\ic{n}$ is $\PP^n$-like by the first part and $\Hom_{\proj^n}^*(X,Y)$ is finite-dimensional for any $X,Y\in\Dbc(\proj^n)$, we only need to check the Calabi--Yau property.
                        As the perverse $t$-structure has faithful heart, by \cref{check_CY_on_proj} it is enough to check that the composition pairing 
                        \begin{equation*}
                            \Hom_{\proj^n}(P,\ic{n}[r])\otimes \Hom_{\proj^n}(\ic{n},P[2n-r])\to\Hom_{\proj^n}(\ic{n},\ic{n}[2n])\cong\kk
                        \end{equation*}
                        is non-degenerate for any $r$ and any indecomposable projective object $P\in\perv{\PP^n}$.
        
                        For $P=P_k$ with $k<n$, we apply $\Hom_{\proj^n}(-,\ic{n})$ to \cref{delta_flags}.
                        From \cref{topIC_to_lower_nabla,DeltaHoms} it follows that all connecting morphisms are isomorphisms, so $\Hom_{\proj^n}(P,\ic{n}[r])=0$ for all $r$.
                        We also have $\Hom_{\proj^n}(\ic{n},P[2n-r])=0$ for all $r$ by \cref{proj_inj}, and thus the only non-trivial case is $P=P_n$.
                        Alternatively, for this one can also use that $P_k=I_k$ is the projective cover and injective hull of $\ic{k}$, and that the perverse $t$-structure has faithful heart.
                        
                        As $P_n=\stan{n}$, we know from \cref{topIC_to_lower_nabla} that $\Hom_{\proj^n}(P_n,\ic{n}[r])$ is one-dimensional if $r=0$, and vanishes otherwise.
                        From \cref{mor_simple_stand} we know that $\Hom_{\proj^n}(\ic{n},P_n[2n-r])$ is one-dimensional for $r=0$ and vanishes otherwise.
                        Moreover, by construction of $\phi^{2n}_{n,n}$ the composition $\ic{n}\longto{\phi^{2n}_{n,n}} \stan{n}[2n]\longto{\mu_{n,n}}\ic{n}[2n]$ is precisely $\epsilon^{2n}_{n,n}\colon \ic{n}\to\ic{n}[2n]$, and thus the composition pairing is non-degenerate.
                    \item For $k<n$, the composition pairing
                        \begin{equation*}
                            \Hom_{\proj^n}(P,\ic{k}[r])\otimes \Hom_{\proj^n}(\ic{k},P[2k-r])\to\Hom_{\proj^n}(\ic{k},\ic{k}[2k])\cong\kk
                        \end{equation*}
                        cannot be non-degenerate since for $P=P_k=I_k$ the tensor factors on the left-hand side are non-zero only for $r=0$ and $r=2k$, respectively.\qedhere
                \end{enumerate}
            \end{proof}
        \subsection{Characterization of the Serre functor}
            The following characterization of Serre functors is adapted from \cite[Thm.~3.4]{MR2369489}.
            The main difference is that we would like to start with a triangulated functor which looks like a derived functor, but is not a priori known to arise as a derived functor.
            Showing that such a functor is indeed a derived functor is hard if one only uses triangulated categories, see e.g.~\cite{RickardMO}.
            However, this technical issue can be resolved by using $\infty$-enhancements.
            \begin{lemma}\label{serrefunctorcharacterization}
                Let $\cat{A}$ be a finite-length abelian category with finitely many simples, enough projectives and enough injectives.
                Assume that $\cat{A}$ is of finite global dimension, all the projective-injective objects in $\cat{A}$ have isomorphic top and socle, and that there is a projective generator $P$ of $\cat{A}$ admitting a presentation $0\to P\to X_1\to X_2$ with $X_1$, $X_2$ projective-injective.
                Let $\D^+_\infty(\cat{A})$ be the derived category of $\cat{A}$ in the $\infty$-categorical sense as defined in \cite[Variant~1.3.2.8]{LurieHigherAlgebra}, and let $F\colon \D^+_\infty(\cat{A})\to\D^+_\infty(\cat{A})$ be a functor of $\infty$-categories.
                
                If the triangulated functor $hF\colon \D^+(\cat{A})\to\D^+(\cat{A})$ satisfies the conditions
                \begin{enumerate}
                    \item $hF$ restricts to an equivalence $hF\colon\D^b(\cat{A})\to\D^b(\cat{A})$,
                    \item $hF(\D^+(\cat{A})^{\geq 0})\subseteq\D^+(\cat{A})^{\geq 0}$, where $\D^+(\cat{A})^{\geq 0}$ denotes the non-negative part of the standard $t$-structure,
                    \item $hF(\Inj(\cat{A}))\subseteq\Proj(\cat{A})$,
                    \item $H^0\circ hF$ preserves the subcategory $\ProjInj(\cat{A})$ of projective-injective objects, and restricted to this category is isomorphic to the inverse Nakayama functor $\nu^{-1}$,
                \end{enumerate}
                then $hF\colon \Db(\cat{A})\to\Db(\cat{A})$ is an inverse Serre functor for $\Db(\cat{A})$.
            \end{lemma}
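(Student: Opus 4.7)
The plan is to identify $hF$ with the right derived functor $\mathbb{R}\nu^{-1}$ of the inverse Nakayama functor $\nu^{-1}\colon\Inj(\cat{A})\to\Proj(\cat{A})$, which by \cref{fdalgserrefunctor} is an inverse Serre functor for $\Db(\cat{A})$.

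First, I would exploit the $\infty$-enhancement to recognize $F$ as a right derived functor. By conditions (2) and (3), for every injective $I$ the object $F(I)$ lies in $\Proj(\cat{A})\cap\D^+(\cat{A})^{\geq 0}$, hence is concentrated in cohomological degree zero; this makes $F|_{\Inj(\cat{A})}$ an additive functor $\Inj(\cat{A})\to\Proj(\cat{A})$. Since $\D^+_\infty(\cat{A})$ is built from $\Inj(\cat{A})$ via injective resolutions and $F$ is left $t$-exact, the universal property of the right derived $\infty$-functor yields $F\simeq\mathbb{R}(F|_{\Inj(\cat{A})})$; the analogous statement holds tautologically for $\mathbb{R}\nu^{-1}$. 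Thus the task reduces to producing a natural isomorphism of additive functors $F|_{\Inj(\cat{A})}\cong\nu^{-1}|_{\Inj(\cat{A})}$.

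By condition (4), such a natural isomorphism is already given on $\ProjInj(\cat{A})\subseteq\Inj(\cat{A})$; the assumption that every projective-injective has isomorphic top and socle ensures that $\nu^{-1}$ preserves $\ProjInj(\cat{A})$, making this restricted comparison well posed. The crux is then to propagate the isomorphism from $\ProjInj(\cat{A})$ to all of $\Inj(\cat{A})$, or equivalently on the derived side from $\ProjInj(\cat{A})$ to $\D^b_\infty(\cat{A})$. Here the copresentation $0\to P\to X_1\to X_2$ of the projective generator $P$ by projective-injectives, together with $\gdim{\cat{A}}<\infty$, plays the decisive role: every projective is a summand of some $P^n$ and therefore also admits a PI-copresentation, and every object of $\D^b(\cat{A})$ has a bounded projective resolution, so $\ProjInj(\cat{A})$ generates $\D^b_\infty(\cat{A})$ as a stable $\infty$-subcategory. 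By the universal property of stable closures in stable $\infty$-categories, the homotopy-coherent natural isomorphism on $\ProjInj(\cat{A})$ extends uniquely to $\D^b_\infty(\cat{A})$; passing to homotopy categories gives the desired $hF\cong\mathbb{R}\nu^{-1}$.

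I expect the main obstacle to be precisely this extension step: in the setting of ordinary triangulated categories, a natural transformation between triangulated functors agreeing on a generating subcategory need not extend to the triangulated hull (this is the subtlety highlighted in \cite{RickardMO} and the essential reason for enhancing $F$ to an $\infty$-functor in the first place). Once this is handled by the $\infty$-categorical formalism, the remaining ingredients — left $t$-exactness, the image of injectives in projectives, the prescribed action on projective-injectives, and the PI-copresentation of the projective generator — combine in a routine fashion to complete the identification.
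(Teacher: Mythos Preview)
Your overall strategy matches the paper's: identify $hF$ with $\mathrm{R}\nu^{-1}$. The execution, however, diverges at the extension step, and the two routes use the hypotheses rather differently.

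The paper does \emph{not} extend from $\ProjInj(\cat{A})$ to $\D^b_\infty(\cat{A})$ via stable generation. Instead it first works entirely $1$-categorically to show $H^0\circ hF\cong\nu^{-1}$ as left exact functors $\cat{A}\to\cat{A}$: the PI-copresentation and left exactness give the isomorphism on $\Proj(\cat{A})$; then condition~(1) is used essentially, via the fact that any autoequivalence of $\Db(\cat{A})$ commutes with the Serre functor, to obtain $(H^0\circ hF)^2\cong\nu^{-1}\circ(H^0\circ hF)$ on $\Inj(\cat{A})$; a full-faithfulness argument then cancels one factor to get $H^0\circ hF\cong\nu^{-1}$ on $\Inj(\cat{A})$, hence on $\cat{A}$. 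Only afterwards is the $\infty$-enhancement invoked once, via \cite[Thm.~1.3.3.2]{LurieHigherAlgebra}, which identifies left $t$-exact $\infty$-functors on $\D^+_\infty(\cat{A})$ with left exact functors on $\cat{A}$.

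Your route bypasses this entire $1$-categorical detour (and in particular makes no essential use of condition~(1)), replacing it with the claim that exact $\infty$-functors on $\D^b_\infty(\cat{A})$ are determined by their restriction to $\ProjInj(\cat{A})$. This is correct: since objects of $\ProjInj(\cat{A})$ have no higher Exts among themselves, one has $\D^b_\infty(\cat{A})\simeq K^b_\infty(\ProjInj(\cat{A}))^\natural$, and the pretriangulated (or twisted-complex) envelope of an additive category enjoys exactly this universal property. But the phrase ``universal property of stable closures'' is vague and you give no reference; if you pursue this line, state precisely which universal property you mean and cite it. What each approach buys: the paper's argument rests on one well-documented $\infty$-categorical input at the cost of the commutation-with-Serre trick and condition~(1); yours is shorter and shows condition~(1) is actually redundant, but leans on a second universal property that, while true, is less readily citable in this exact form.
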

            \begin{proof}
                The argument essentially follows the proof of \cite[Thm.~3.4]{MR2369489}.
                We write $\mathbb{R}\nu^{-1}\colon \D^+_\infty(\cat{A})\to\D^+_\infty(\cat{A})$ for the right derived functor of the inverse Nakayama functor in the $\infty$-categorical sense, see \cite[Ex.~1.3.3.4]{LurieHigherAlgebra} for the definition (actually we use the dual version, obtained by $\D^+_\infty(\cat{A})=\D^-_\infty(\cat{A}^\op)^\op$).
                Then $h\mathbb{R}\nu^{-1}=\mathrm{R}\nu^{-1}\colon \D^+(\cat{A})\to\D^+(\cat{A})$ is the usual right derived functor, and its restriction $\mathrm{R}\nu^{-1}\colon \Db(\cat{A})\to\Db(\cat{A})$ is the inverse Serre functor by \cref{fdalgserrefunctor}.
                We show that $\mathbb{R}\nu^{-1}\cong F$, which then implies the claim.
                \begin{steps}
                    \item On the subcategory $\Proj(\cat{A})$, we have $H^0\circ hF\cong\nu^{-1}$.
        
                        Proof: By assumption, the projective generator $P$ of $\cat{A}$ admits a presentation $0\to P\to X_1\to X_2$ with $X_1$, $X_2$ projective-injective.
                        Since $H^0\circ hF$ and $\nu^{-1}$ are left exact, and $H^0\circ hF\cong\nu^{-1}$ on the subcategory $\ProjInj(\cat{A})$, it follows that $H^0(F(P))\cong\nu^{-1}(P)$.
                        It is easy to see that this isomorphism is functorial in $P$ and compatible with taking direct sums and summands, which proves the claim.
                    \item $hF\colon \Db(\cat{A})\to\Db(\cat{A})$ commutes with the (inverse) Serre functor.
                    
                        Proof: From the Yoneda lemma it follows that the Serre functor commutes with autoequivalences.
                    \item On the subcategory $\Inj(\cat{A})$, we have $H^0\circ hF\circ\nu^{-1}\cong\nu^{-1}\circ H^0\circ hF$.
        
                        Proof: By assumption we have $hF\cong H^0\circ hF$ on $\Inj(\cat{A})$.
                        With this and Step~2 we get
                        \begin{equation*}
                            \Serre^{-1} \circ H^0\circ hF\cong\Serre^{-1}\circ hF\cong hF\circ \Serre^{-1}\cong hF\circ\nu^{-1}.
                        \end{equation*}
                        Observe that $H^0\circ hF$ takes $\Inj(\cat{A})$ to $\cat{A}$, and therefore taking $H^0$ on both sides yields
                        \begin{equation*}
                            \nu^{-1}\circ H^0\circ hF\cong H^0\circ\Serre^{-1}\circ H^0\circ hF\cong H^0\circ hF\circ\nu^{-1}.
                        \end{equation*}
                    \item $H^0\circ hF$ is fully faithful on the subcategory $\Proj(\cat{A})$.

                        Proof: By assumption, $H^0\circ hF$ is isomorphic to $\nu^{-1}$ on the full subcategory $\ProjInj(\cat{A})$, and $\nu^{-1}\colon\ProjInj(\cat{A})\to\ProjInj(\cat{A})$ is an autoequivalence.
                        Let $\cat{C}\subseteq\cat{A}$ be the full subcategory of objects $M$ admitting a presentation $0\to M\to X_1\to X_2$ with $X_1$, $X_2$ projective-injective.
                        Then $H^0\circ hF$ restricts\slash extends to $H^0\circ hF\colon \cat{C}\to\cat{C}$.
                        Moreover, $(H^0\circ hF)^{-1}$ can be extended to $(H^0\circ hF)^{-1}\colon\cat{C}\to\cat{C}$ by setting $(H^0\circ hF)^{-1}(\ker(\phi\colon X_1\to X_2))=\ker((H^0\circ hF)^{-1}(\phi)\colon (H^0\circ hF)^{-1}(X_1)\to (H^0\circ hF)^{-1}(X_2))$.
                        By assumption all projective objects lie in $\cat{C}$, and thus $H^0\circ hF$ is fully faithful on $\Proj(\cat{A})$.
                    \item We have $H^0\circ hF\cong\nu^{-1}$ as functors $\cat{A}\to\cat{A}$.
                        
                        Proof: On $\Inj(\cat{A})$ we get
                        \begin{equation*}
                            (H^0\circ hF)^2\cong \nu^{-1}\circ H^0\circ hF\cong H^0\circ hF\circ\nu^{-1},
                        \end{equation*}
                        where we apply Step~1 using that $H^0\circ hF$ takes $\Inj(\cat{A})$ to $\Proj(\cat{A})$, and Step~3.
                        As both $\nu^{-1}$ and $H^0\circ hF$ take $\Inj(\cat{A})$ to $\Proj(\cat{A})$, and $H^0\circ hF$ is fully faithful on $\Proj(\cat{A})$ (and thus an equivalence to its image) by Step~4, it follows that $H^0\circ hF\cong\nu^{-1}$ on $\Inj(\cat{A})$.
                        Moreover, both functors are left exact, so the claim follows from this by replacing any object by an injective resolution and applying an argument similar to the proof of Step~1.
                    \item We have $hF\cong\mathrm{R}\nu^{-1}$ as functors $\D^+(\cat{A})\to\D^+(\cat{A})$.
        
                        Proof: Note that $\D^+_\infty(\cat{A})$ with the standard $t$-structure satisfies the assumptions of \cite[Thm.~1.3.3.2]{LurieHigherAlgebra} (in particular, it is right complete by \cite[Prop.~1.3.3.16]{LurieHigherAlgebra}).
                        Therefore $F\colon \D^+_\infty(\cat{A})\to\D^+_\infty(\cat{A})$ is up to isomorphism the only functor of $\infty$-categories restricting to $t_{\leq 0} \circ hF|_\cat{A}\in N(\Fun_{\mathrm{lex}}(\cat{A},\cat{A}))$, where $N(\Fun_{\mathrm{lex}}(\cat{A},\cat{A}))$ denotes the nerve (see \cite[\href{https://kerodon.net/tag/002M}{Tag~002M}]{Kerodon}) of the category of left exact functors $\cat{A}\to\cat{A}$.
                        On the other hand, by the above we know $H^0\circ hF|_\cat{A}\cong\nu^{-1}=t_{\leq 0}\circ h\mathbb{R}\nu^{-1}|_\cat{A}$ as ordinary functors $\cat{A}\to\cat{A}$, and thus they are also isomorphic in $N(\Fun_{\mathrm{lex}}(\cat{A},\cat{A}))$.
                        By \cite[Thm.~1.3.3.2]{LurieHigherAlgebra} it follows that $F\cong \mathbb{R}\nu^{-1}$ as functors of $\infty$-categories, and therefore $hF\cong h\mathbb{R}\nu^{-1}=\mathrm{R}\nu^{-1}$ as triangulated functors $\D^+(\cat{A})\to\D^+(\cat{A})$.
                    \item As $\cat{A}$ has finite global dimension, $\mathrm{R}\nu^{-1}\colon \Db(\cat{A})\to\Db(\cat{A})$ is the inverse Serre functor by \cref{fdalgserrefunctor}, and by Step~6 we have $hF\cong \mathrm{R}\nu^{-1}$.\qedhere
                \end{steps}
            \end{proof}
            Since $\ic{n}$ is a $\proj^n$-object in $\Dbc(\proj^n)$ by \cref{simples_P-obj}, we can consider the $\proj$-twist $\ptw{\ic{n}}\colon\Dbc(\proj^n)\to\Dbc(\proj^n)$ as in \cref{Ptwistdef}.
            By applying \cref{serrefunctorcharacterization} to $\ptw{\ic{n}}$, we obtain:
            \begin{theorem}\label{serrefunctortwist}
                The $\proj$-twist $\ptw{\ic{n}}\colon \Dbc(\proj^n)\to \Dbc(\proj^n)$ is the inverse Serre functor.
            \end{theorem}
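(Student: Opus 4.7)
My plan is to apply \cref{serrefunctorcharacterization} to the abelian category $\cat{A}=\perv{\proj^n}$, taking $F$ to be an $\infty$-categorical enhancement of $\ptw{\ic{n}}$ obtained from the realization equivalence $\Db(\perv{\proj^n})\cong\Dbc(\proj^n)$ and the dg enhancement of $\Dbc(\proj^n)$ discussed in \cref{remarksontwists}. First I would verify the hypotheses of the lemma for $\perv{\proj^n}$: the category has finite length with finitely many simples and enough projectives and injectives, has global dimension $2n$ (as recorded in \cref{SimpleStdProj}), and every projective-injective $P_k$ (for $0\leq k<n$) satisfies $\mathrm{top}(P_k)=\mathrm{soc}(P_k)=\ic{k}$. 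For the projective generator $P=\bigoplus_{k=0}^n P_k$, the only summand which is not projective-injective is $P_n=\stan{n}$; iterating the short exact sequences $0\to\stan{k}\to P_{k-1}\to\stan{k-1}\to 0$ (which exist since $\mathrm{soc}(\stan{k})=\ic{k-1}$ forces $\stan{k}\hookrightarrow I_{k-1}=P_{k-1}$) produces a presentation $0\to\stan{n}\to P_{n-1}\to P_{n-2}$ with both right-hand terms projective-injective.

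I would then verify the four conditions of the lemma. Condition (1) is precisely the Huybrechts--Thomas proposition. For conditions (3) and (4), the key inputs are the values of $\ptw{\ic{n}}$ on the indecomposable injectives. For $I_k=P_k$ with $k<n$, \cref{proj_inj} gives $\Hom_{\proj^n}^*(\ic{n},P_k)=0$, so both cones in the $\proj$-twist diagram degenerate and $\ptw{\ic{n}}(P_k)\cong P_k$: this is projective (handling (3) on these objects) and agrees with $\nu^{-1}(P_k)=P(\mathrm{soc}(P_k))=P_k$ (handling (4)). For $I_n=\costan{n}$, \cref{topIC_to_lower_nabla} yields $\Hom_{\proj^n}^*(\ic{n},\costan{n})=\kk$ concentrated in degree zero, and two applications of the octahedral axiom to the composition $\ic{n}[-2]\xrightarrow{t}\ic{n}\xrightarrow{\ev}\costan{n}$ (whose composite vanishes since $\Hom_{\proj^n}^*(\ic{n},\costan{n}[2])=0$) identify $\ptw{\ic{n}}(\costan{n})$ with an extension $\ic{n-1}\to\ptw{\ic{n}}(\costan{n})\to\ic{n}$ in the perverse heart. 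Since $\ptw{\ic{n}}$ is an equivalence by (1) and $\costan{n}$ is indecomposable, this extension is indecomposable, and using $\Ext_{\perv{\proj^n}}^1(\ic{n},\ic{n-1})\cong\kk$ the unique non-split indecomposable extension is $\stan{n}=P_n$, which is projective.

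Condition (2), preservation of $\D^+(\perv{\proj^n})^{\geq 0}$, reduces by the triangulated structure to checking that $\ptw{\ic{n}}(X)\in\D^+(\perv{\proj^n})^{\geq 0}$ for $X\in\perv{\proj^n}$. Since $\ic{n}$ and $X$ both lie in the perverse heart, $\Hom^*(\ic{n},X)$ is concentrated in non-negative degrees, so the source $\Hom^*(\ic{n},X)\otimes\ic{n}$ of the $\proj$-twist diagram has vanishing negative perverse cohomology; a long exact sequence chase, combined with injectivity of the evaluation $\Hom(\ic{n},X)\otimes\ic{n}\to X$ (which holds because $\ic{n}$ is simple, so the image is the inclusion of the $\ic{n}$-isotypic part of $\mathrm{soc}(X)$), then gives $H^{-1}(\ptw{\ic{n}}(X))=0$. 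The main obstacle I expect is the double-octahedron computation of $\ptw{\ic{n}}(\costan{n})$; the indecomposability argument fortunately bypasses the need to identify the extension class explicitly, so the bulk of the technical work is careful bookkeeping to extract the triangle $\ic{n-1}\to\ptw{\ic{n}}(\costan{n})\to\ic{n}$ in the first place.
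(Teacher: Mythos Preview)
Your strategy matches the paper's: apply \cref{serrefunctorcharacterization} after verifying the structural hypotheses on $\perv{\proj^n}$, then check conditions (1)--(4). The differences are in execution. For the $I_n$ case of condition (3), the paper instead computes $\cone{\epsilon^2_{n,n}}$ directly and uses an octahedron on the factorization $\costan{n}\to\ic{n-1}\to\stan{n}$ to identify $\ptw{\ic{n}}(\costan{n})\cong\stan{n}$; your indecomposability argument is a valid alternative, and in fact a single octahedron on $\ic{n}\to\cone{t}\xrightarrow{\ol{\ev}}\costan{n}$ (with composite $\ev$ and $\cone{\ev}=\ic{n-1}$) already yields the triangle $\ic{n-1}\to\ptw{\ic{n}}(\costan{n})\to\ic{n}$ you need. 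For condition (2), the paper reduces to the simples $\ic{k}$ together with $\D^{>0}$ rather than to the heart; your injectivity-of-evaluation argument is correct for heart objects, but your reduction ``by the triangulated structure'' needs a word about unbounded objects in $\D^+(\perv{\proj^n})^{\geq 0}$ (e.g.\ use the truncation triangle $H^0(X)\to X\to\tau^{>0}X$ and the trivial case $X\in\D^{>0}$, as the paper does).

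There is one small but genuine gap: for condition (4) you only check $\ptw{\ic{n}}(P_k)\cong P_k\cong\nu^{-1}(P_k)$ on objects, whereas the lemma requires an isomorphism of \emph{functors} on $\ProjInj(\perv{\proj^n})$. The paper handles this by noting that $\ptw{\ic{n}}|_{\ProjInj}$ is naturally isomorphic to the identity (since $\RHom_{\perv{\proj^n}}(\ic{n},-)$ vanishes on this subcategory, so the defining triangles degenerate functorially), and that $\nu^{-1}|_{\ProjInj}\cong\id$ because the endomorphism algebra of the sum of the projective-injectives is symmetric (\cite[Prop.~3.5]{MR2369489}). You should add this to your argument.
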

            \begin{proof}
                In order to apply \cref{serrefunctorcharacterization} we first have to check the technical assumptions.

                Recall from \cref{setupDbcPn,SimpleStdProj} that $\Dbc(\proj^n)\cong\Db(\perv{\proj^n})$, and that the category $\perv{\proj^n}$ has finite global dimension, and enough projectives and enough injectives.
                By \cref{proj_inj}, all indecomposable projective objects except the projective cover $P_n$ of $\ic{n}$ are injective, and moreover there is an exact sequence $0\to P_n\to P_{n-1}\to P_{n-2}$ in $\perv{\proj^n}$ (this can be seen from the $\stan{}$-flags).
                
                Since $\ic{n}$ is a $\proj^n$-object in $\Db(\perv{\proj^n})\cong\Dbc(\proj^n)$, it is also $\proj^n$-like in $\D^+(\perv{\proj^n})$.
                We want to consider the $\proj$-twist $\ptw{\ic{n}}\colon\D^+(\perv{\proj^n})\to\D^+(\perv{\proj^n})$.
                To define this, we use the usual dg enhancement $\widetilde{\cat{D}}=\Ch^+(\Inj(\perv{\proj^n}))$ of $\D^+(\perv{\proj^n})$.
                By \cref{remarksontwists}, we can use $\RHom_{\perv{\proj^n}}(\ic{n},-)$ instead of $\Hom_{\widetilde{\cat{D}}}(\ic{n},-)$ to define the $\proj$-twist.
                Observe that $\RHom_{\perv{\proj^n}}(\ic{n},Y)$ is degreewise finite-dimensional for all $Y\in\Ch^+(\Inj(\perv{\proj^n}))$, and thus the tensor product $\RHom_\cat{D}(\ic{n},Y)\otimes\ic{n}$ exists in $\D^+(\perv{\proj^n})$ for all $Y\in\widetilde{\cat{D}}$, as required.
                Hence $\ptw{\ic{n}}\colon \D^+(\perv{\proj^n})\to\D^+(\perv{\proj^n})$ is well-defined.
                
                By construction, we have $\ptw{\ic{n}}=H^0(\dgptw{\ic{n}})\colon\D^+(\perv{\proj^n})\to \D^+(\perv{\proj^n})$, where $\dgptw{\ic{n}}\colon \widetilde{\cat{D}}\to\widetilde{\cat{D}}$ is a dg functor.
                By definition (see \cite[Variant~1.3.2.8]{LurieHigherAlgebra}) we have $\D^+_\infty(\perv{\proj^n})=N_{\mathrm{dg}}(\Ch^+(\Inj(\perv{\proj^n})))$.
                Here $N_{\mathrm{dg}}$ denotes the dg nerve from \cite[Constr.~1.3.1.6]{LurieHigherAlgebra}, see also \cite[\href{https://kerodon.net/tag/00PK}{Tag~00PK}]{Kerodon}.
                By \cite[Prop.~1.3.1.20]{LurieHigherAlgebra}, $N_{\mathrm{dg}}(\dgptw{\ic{n}})$ is a functor of $\infty$-categories $\D^+_\infty(\perv{\proj^n})\to\D^+_\infty(\perv{\proj^n})$, and by passing to homotopy categories, by \cite[Rem.~1.3.1.11]{LurieHigherAlgebra} we recover $hN_{\mathrm{dg}}(\dgptw{\ic{n}})=H^0(\dgptw{\ic{n}})=\ptw{\ic{n}}\colon \D^+(\perv{\proj^n})\to\D^+(\perv{\proj^n})$.
                
                It remains to check the conditions from \cref{serrefunctorcharacterization}:
                \begin{enumerate}
                    \item As $\D^+(\perv{\proj^n})^{\geq 0}$ is the extension closure of $\D^+(\perv{\proj^n})^{>0}$ and the IC sheaves, it suffices to show $\ptw{\ic{n}}(\D^+(\perv{\proj^n})^{>0})\subseteq\D^+(\perv{\proj^n})^{\geq 0}$ and $\ptw{\ic{n}}(\ic{k})\in \D^+(\perv{\proj^n})^{\geq 0}$ for all $0\leq k\leq n$.
                        
                        For this we use the triangles \cref{ptwistdiagram} defining the $\proj$-twist.
                        First, observe that $\ptw{\ic{n}}(\ic{n})\cong\ic{n}[-2n]\in \D^+(\perv{\proj^n})^{\geq 0}$.
                        Furthermore, for an object $X\in\D^+(\perv{\proj^n})^{>0}$ or $X=\ic{k}$ with $k<n$, $\RHom_{\perv{\proj^n}}(\ic{n},X)$ is cohomologically concentrated in positive degrees.
                        Thus $\RHom_{\perv{\proj^n}}(\ic{n},X)\otimes\ic{n}[-2]$ has cohomologies in degrees $>2$, and $\RHom_{\perv{\proj^n}}(\ic{n},X)\otimes\ic{n}$ has cohomologies in degrees $>0$, so $\cone{t^*\otimes\id-\id\otimes t}(X)$ has cohomologies in degrees $>0$.
                        As $X\in\D^+(\perv{\proj^n})^{\geq 0}$, it follows that $\ptw{\ic{n}}(X)\in\D^+(\perv{\proj^n})^{\geq 0}$.
                    \item Let $I\in\perv{\proj^n}$ be indecomposable injective.
                        If $I=I_k$ for $k<n$, then $\RHom_{\perv{\proj^n}}(\ic{n},I_k)=0$ and therefore $\ptw{\ic{n}}(I_k)\cong I_k=P_k$.

                        For $I=I_n=\costan{n}$ we have $\RHom_{\perv{\proj^n}}(\ic{n},I_n)\cong\kk$ (concentrated in degree~$0$), and so by evaluating \cref{ptwistdiagram} at $I_n$ we obtain the diagram
                        \begin{equation*}
                            \begin{tikzcd}
                                \ic{n}[-2]\arrow{r}{\epsilon^2_{n,n}}
                                &\ic{n}\arrow{r}\arrow{d}[swap]{\verdier(\mu_{n,n})}
                                &\cone{\epsilon^2_{n,n}}\arrow[dashed]{ld}\\
                                &I_n\arrow{ld}&\\
                                \ptw{\ic{n}}(I_n)&
                            \end{tikzcd}
                        \end{equation*}
                        where $\epsilon^2_{n,n}\colon \ic{n}[-2]\to\ic{n}$ is the generator of $\End_{\proj^n}^*(\ic{n})$.
                        
                        From the long exact sequence obtained by applying $\Hom_{\proj^n}(-,I_n)$ to the horizontal triangle and \cref{topIC_to_lower_nabla} it follows that $\Hom_{\proj^n}(\cone{\epsilon^2_{n,n}},I_n)$ is $1$-dimensional, i.e.~the induced morphism $\cone{\epsilon^2_{n,n}}\to I_n$ is unique up to scalar.
                        Therefore it suffices to find a non-split triangle of the form $\cone{\epsilon^2_{n,n}}\to I_n\to P_n\to \cone{\epsilon^2_{n,n}}[1]$. We know that $\Hom_{\proj^n}(I_n,P_n)=\Hom_{\proj^n}(\costan{n},\stan{n})$ is $1$-dimensional, spanned by the composition $f\colon I_n\longto{\verdier(\phi^0_{n-1,n})} \ic{n-1}\longto{\phi^0_{n-1,n}} P_n$.
                        From the octahedral axiom (using the triangle \cref{delta_comp_series} defining $P_n=\stan{n}$ as well as its dual) we obtain the diagram
                        \begin{equation*}
                            \begin{tikzcd}
                                I_n\arrow{r}{\verdier(\phi^0_{n-1,n})}\arrow{d}[swap]{=}
                                &\ic{n-1}\arrow{r}{\epsilon^1_{n-1,n}}\arrow{d}{\phi^0_{n-1,n}}
                                &\ic{n}[1]\arrow{r}\arrow{d}
                                &I_n[1]\arrow{d}{=}\\
                                I_n\arrow{r}{f}\arrow{d}[swap]{\verdier(\phi^0_{n-1,n})}
                                &P_n\arrow{r}\arrow{d}{=}
                                &\cone{f}\arrow{r}\arrow{d}
                                &I_n[1]\arrow{d}\\
                                \ic{n-1}\arrow{r}{\phi^0_{n-1,n}}
                                &P_n\arrow{r}
                                &\ic{n}\arrow{r}{\epsilon^1_{n,n-1}}\arrow{d}{\epsilon^2_{n,n}}
                                &\ic{n-1}[1]\arrow{d}{\epsilon^1_{n-1,n}}\\
                                &&\ic{n}[2]\arrow{r}{=}
                                &\ic{n}[2]\rlap{.}
                            \end{tikzcd}
                        \end{equation*}
                        Thus $\cone{f}[-1]\cong\cone{\epsilon^2_{n,n}\colon \ic{n}[-2]\to\ic{n}}$, and hence the second row is the desired triangle (up to rotation).
                    \item From the above it also follows that $\ptw{\ic{n}}$ is the identity functor on the full subcategory $\ProjInj(\perv{\proj^n})$.
                        Since the endomorphism algebra of the direct sum of the projective-injective objects is symmetric, we also have $\nu^{-1}\cong\id$ on $\ProjInj(\perv{\proj^n})$ by \cite[Prop.~3.5]{MR2369489}.\qedhere
                \end{enumerate}
            \end{proof}
            \begin{remark}
                In particular, \cref{serrefunctortwist} recovers the description of the Serre functor of $\Dbc(\proj^1)$ from \cite[\S3.1, p.~680]{MR2739061}, since $\ptw{\ic{1}}\cong\sptw{\ic{1}}^2$.
            \end{remark}
        \subsection{Other descriptions of the Serre functor}\label{otherserres}
            Recall the equivalences $\Dbc(\proj^n)\cong\Dbc(\catO^\lie{p}_0(\lie{sl}_{n+1}(\kk)))\cong\Db(A_n\lmodfd)$ mentioned in \cref{otherperspectives}.
            The Serre functor of $\Dbc(\proj^n)$ also has explicit descriptions in terms of finite-dimensional algebras and in terms of Lie algebras, and furthermore there is a description of the Serre functor for the constructible category of the full flag variety.
            We summarize these results and explain how they are related to \cref{serrefunctortwist}.
            
            In terms of finite-dimensional algebras, the Serre functor is given by the derived functor of the Nakayama functor by results of Happel \cite[Prop.~4.10]{Happel}.
            This actually underlies our argument, as the proof of the criterion \cref{serrefunctorcharacterization} (which we adapted from \cite[Thm.~3.4]{MR2369489}) compares the candidate Serre functor with the derived functor of the Nakayama functor.

            In \cite{MR2369489} Mazorchuk and Stroppel provide a description of the Serre functor of $\Db(\catO^\lie{p}_0(\lie{sl}_{n+1}(\kk)))$ in Lie-theoretic language.
            For this, the criterion \cite[Thm.~3.4]{MR2369489} is first used to show that the Serre functor of $\Db(\catO_0(\lie{sl}_{n+1}(\kk)))$ is given by the (derived) shuffling functor $\Sh_{w_0}^2$, where $\Sh_{w_0}=\Sh_{s_{i_1}}\dots\Sh_{s_{i_r}}$ for a reduced expression $s_{i_1}\dots s_{i_r}$ of the longest element $w_0$ of the Weyl group.
            Alternatively, the Serre functor of $\Db(\catO_0(\lie{sl}_{n+1}(\kk)))$ is also isomorphic to the (derived) Arkhipov twisting functor $\Tw_{w_0}^2$, where $\Tw_{w_0}=\Tw_{s_{i_1}}\dots\Tw_{s_{i_r}}$.
            
            By \cite[Prop.~4.4]{MR2369489}, the Serre functor of $\Db(\catO^\lie{p}_0(\lie{sl}_{n+1}(\kk)))$ is then $\Sh_{w_0}^2[-2\ell(w_0^\lie{p})]$, where $w_0^\lie{p}\in W_\lie{p}\cong S_1\times S_n$ is the longest element of the parabolic Weyl group.
            Rather than applying the criterion \cref{serrefunctorcharacterization}, the proof uses the inclusion $\Db(\catO^\lie{p}_0(\lie{sl}_{n+1}(\kk)))\hookto\Db(\catO_0(\lie{sl}_{n+1}(\kk)))$ and its left and right adjoints (i.e.~the (derived) Zuckerman functors) to ``push down'' the description of the Serre functor from $\Db(\catO_0(\lie{sl}_{n+1}(\kk)))$.
            In particular, note that since the inclusion $\Db(\catO^\lie{p}_0(\lie{sl}_{n+1}(\kk)))\hookto\Db(\catO_0(\lie{sl}_{n+1}(\kk)))$ is not full, the Serre functor of $\Db(\catO^\lie{p}_0(\lie{sl}_{n+1}(\kk)))$ is not the restriction of that of $\Db(\catO_0(\lie{sl}_{n+1}(\kk)))$.

            In the language of perverse sheaves, in \cite{MR2119139} Beilinson, Bezrukavnikov and Mir\-ko\-vi\'c provide a description of the Serre functor for the full flag variety $G/B$, where (for us) $G=\GL_{n+1}(\kk)$ and $B\subseteq G$ is the usual Borel subgroup of upper triangular matrices.
            By \cite[Prop.~2.5]{MR2119139}, the Serre functor of $\Dbc(G/B)$ is given by the Radon transform $(R_{w_0}^*)^2$, where $R_{w_0}^*=R_{s_{i_1}}^*\dots R_{s_{i_r}}^*$ .
            
            Under the equivalences $\catO_0(\lie{sl}_{n+1}(\kk))\cong\perv{G/B}$ and $\catO^\lie{p}_0(\lie{sl}_{n+1}(\kk))\cong\perv{G/P}=\perv{\proj^n}$, the inclusion functor corresponds to $\pi^![-d]\cong\pi^*[d]$ and the (dual) Zuckerman functors are $\pi_![d]$ and $\pi_*[-d]$, see e.g.~\cite[p.~504, Rem.~(2)]{MR1322847}.
            Here $P\subseteq G$ is the parabolic subgroup with block sizes $(n,1)$, $\pi\colon G/B\to G/P$ is the canonical map, and $d=\dim G/B-\dim G/P=\ell(w_0^\lie{p})$.
            Using this, one can apply the purely formal argument from \cite[Prop.~4.4]{MR2369489} to obtain a description of the Serre functor of $\Dbc(\proj^n)$ from the description of the Serre functor of $\Dbc(G/B)$.
            
            Combining \cref{serrefunctortwist} with the above observations yields the following relation between $\ptw{\ic{n}}$ and the Radon transform $R_{w_0}^!$, and also a decomposition of $\ptw{\ic{n}}$ into a sequence of spherical twists:
            \begin{corollary}\label{someconsequences}\leavevmode
                \begin{enumerate}
                    \item The square
                        \begin{equation*}
                            \begin{tikzcd}[column sep=large]
                                \Dbc(G/B)\arrow{r}{(R_{w_0}^!)^2[2d]}
                                &\Dbc(G/B)\arrow{d}{\pi_*[-d]}\\
                                \Dbc(\proj^n)\arrow{u}{\pi^*[d]}\arrow{r}{\ptw{\ic{n}}}
                                &\Dbc(\proj^n)
                            \end{tikzcd}
                        \end{equation*}
                        commutes up to natural isomorphism.
                    \item For any reduced expression $w_0=s_{i_1}\dots s_{i_r}$ there is a natural isomorphism
                        \begin{equation*}
                            \ptw{\ic{n}}[2\ell(w_0)-2\ell(w_0^\lie{p})]\cong (\sptw{P_{n-i_1}}\dots\sptw{P_{n-i_r}})^2.
                        \end{equation*}
                \end{enumerate}
            \end{corollary}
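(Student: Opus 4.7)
Both parts will follow by combining \cref{serrefunctortwist}, which identifies $\ptw{\ic{n}}$ with the inverse Serre functor on $\Dbc(\proj^n)$, with the descriptions of Serre functors on $\Dbc(G/B)$ and $\Db(\catO_0(\lie{sl}_{n+1}(\kk)))$ recalled in \cref{otherserres}, together with a pushdown argument through the adjunction $\pi^*[d]\dashv\pi_*[-d]$. The overall strategy is to rewrite $\Serre^{-1}_{\Dbc(\proj^n)}$ either in terms of the Serre functor upstairs on $\Dbc(G/B)$ (for part~(1)) or in terms of parabolic shuffling functors realized as spherical twists (for part~(2)).

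For part~(1), I would start from the description $\Serre_{\Dbc(G/B)}\cong (R^*_{w_0})^2$ of \cite[Prop.~2.5]{MR2119139}, and identify the inverse Serre functor with $(R^!_{w_0})^2[2d]$ by using the shifted adjunction $R^*_{s_i}\dashv R^!_{s_i}[-2]$ between the two Radon transforms and summing the shifts along a reduced expression of $w_0$. The formal Zuckerman-type argument of \cite[Prop.~4.4]{MR2369489}, transported to the geometric setting via the adjunction $\pi^*[d]\dashv\pi_*[-d]$ recalled in \cref{otherserres}, then yields a natural isomorphism $\Serre^{-1}_{\Dbc(\proj^n)}\cong \pi_*[-d]\circ \Serre^{-1}_{\Dbc(G/B)}\circ \pi^*[d]$. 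Replacing the left-hand side by $\ptw{\ic{n}}$ using \cref{serrefunctortwist} produces the claimed commutative square.

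For part~(2), the plan is to use the shuffling functor description. By \cite{MR2369489} the Serre functor of $\Db(\catO_0(\lie{sl}_{n+1}(\kk)))$ is $\Sh_{w_0}^2$, independent of the reduced expression chosen, and by \cite{MR4239691} each simple shuffling functor $\Sh_{s_i}$ is realized as a spherical twist at a projective-injective module. Pushing this decomposition down to $\Db(\catO^\lie{p}_0)\cong\Dbc(\proj^n)$ through $\pi_*[-d]$ converts these into spherical twists at the $0$-spherical projective-injective perverse sheaves $P_{n-i_j}$ on $\proj^n$, which by \cref{proj_inj} are precisely the projective-injectives in $\perv{\proj^n}$. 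Combining the parabolic Serre formula $\Sh_{w_0}^2[-2\ell(w_0^\lie{p})]$ from \cite[Prop.~4.4]{MR2369489} with \cref{serrefunctortwist}, inverting, and accounting for the overall shift $[2\ell(w_0)-2\ell(w_0^\lie{p})]=[2n]$ yields the stated isomorphism. The independence of the right-hand side from the chosen reduced expression follows from the braid relations satisfied by the spherical twists at the projective-injectives, inherited from the corresponding braid relations for the shuffling functors.

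The main obstacle will be carefully tracking and reconciling the various shifts: the shift $[2d]$ relating $R^*_{w_0}$ to $R^!_{w_0}$, the parabolic shift $[-2\ell(w_0^\lie{p})]$ appearing in Mazorchuk--Stroppel's formula, and the discrepancy between the natural directions of the shuffling functors and the spherical twists. A secondary subtlety is verifying that the realization of $\Sh_{s_i}$ as a spherical twist descends cleanly through $\pi_*[-d]$ to become $\sptw{P_{n-i}}$ on $\Dbc(\proj^n)$, without acquiring spurious correction terms; this should follow from the explicit behaviour of $\pi_*[-d]$ on the projective-injective objects that was already exploited in the proof of \cref{serrefunctortwist}.
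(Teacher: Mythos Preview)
Your overall strategy is right, but in both parts you have misplaced where the key shifts and identifications come from, and this leads to a genuine gap.

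For part~(1): the inverse Serre functor of $\Dbc(G/B)$ is simply $(R^!_{w_0})^2$, with no shift; this is exactly what \cite[Fact~2.2]{MR2119139} says ($R^!_{s_i}$ and $R^*_{s_i}$ are mutually inverse equivalences). Your proposed computation via a shifted adjunction $R^*_{s_i}\dashv R^!_{s_i}[-2]$ and ``summing the shifts along a reduced expression of $w_0$'' would produce $2\ell(w_0)$, not $2d=2\ell(w_0^\lie{p})$, so it cannot be correct. The shift $[2d]$ in the square comes instead from the Mazorchuk--Stroppel pushdown formula \cite[Prop.~4.1 and~4.4]{MR2369489}, which in geometric language reads $\Serre^{-1}_{\Dbc(\proj^n)}\cong \pi_*[-d]\circ\Serre^{-1}_{\Dbc(G/B)}\circ\pi^*[d]\,[2d]$; you stated this formula without the $[2d]$. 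Your two errors happen to cancel numerically, but each step as written is wrong.

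For part~(2): the paper does \emph{not} push down individual spherical twists from $\Db(\catO_0)$. Instead it works directly on $\Db(\catO^\lie{p}_0)\cong\Dbc(\proj^n)$: the parabolic Serre formula gives $\ptw{\ic{n}}\cong\Serre^{-1}\cong\Sh_{w_0}^{-2}[2\ell(w_0^\lie{p})]$, and then \cite[Thm.~4.14]{MR4239691} provides the identification $\Sh_{s_i}^{-1}\cong\sptw{P_{n-i}}[-1]$ already on the parabolic category. Collecting the $r=\ell(w_0)$ shifts of $[-1]$ gives the claimed formula. Your proposed route---realizing $\Sh_{s_i}$ as a spherical twist upstairs and then arguing that $\pi_*[-d]$ carries it to $\sptw{P_{n-i}}$ downstairs---is not what the pushdown formula in \cite{MR2369489} provides (that formula only transports the entire Serre functor, not its individual factors), and the ``secondary subtlety'' you flag is in fact the whole content; it is bypassed entirely by citing \cite[Thm.~4.14]{MR4239691} on the parabolic side.
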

            \begin{proof}\leavevmode
                \begin{enumerate}
                    \item By \cite[Prop.~4.1 and Prop.~4.4]{MR2369489}, the inverse Serre functor of $\Db(\catO^\lie{p}_0(\lie{sl}_{n+1}(\kk)))$ is $\hat{Z}\Serre_\lie{b}^{-1} \incl [2d]$, where $\incl\colon \Db(\catO^\lie{p}_0(\lie{sl}_{n+1}(\kk)))\hookto\Db(\catO_0(\lie{sl}_{n+1}(\kk)))$, $\Serre_\lie{b}^{-1}$ is the inverse Serre functor of $\Db(\catO_0(\lie{sl}_{n+1}(\kk)))$, and $\hat{Z}$ the dual Zuckerman functor.
                        In geometric language, $\incl$ is $\pi^*[d]$ and $\hat{Z}$ is $\pi_*[-d]$, and $(R_{w_0}^!)^2$ is the inverse Serre functor of $\Dbc(G/B)$ by \cite[Prop.~2.5 and Fact~2.2]{MR2119139}.
                        The claim then follows from \cref{serrefunctortwist} and uniqueness of the Serre functor.
                    \item By \cref{serrefunctortwist} and \cite[Prop.~4.4]{MR2369489} the inverse Serre functor of $\Dbc(\proj^n)\cong\Db(\catO^\lie{p}_0(\lie{sl}_{n+1}(\kk)))$ is
                        \begin{equation*}
                            \ptw{\ic{n}}\cong\Serre^{-1}\cong\Sh_{w_0}^{-2}[2\ell(w_0^\lie{p})]=(\Sh_{s_{i_r}}^{-1}\dots \Sh_{s_{i_1}}^{-1})^2[2\ell(w_0^\lie{p})].
                        \end{equation*}
                        By \cite[Thm.~4.14]{MR4239691} there is a natural isomorphism $\Sh_{s_i}^{-1}\cong\sptw{P_{n-i}}[-1]$, which proves the claim.\qedhere
                \end{enumerate}
            \end{proof}
            For $n=1$, we in particular get $\sptw{\ic{1}}^2\cong\ptw{\ic{1}}\cong\sptw{P_0}^2[-2]$, and in fact we even have $\sptw{\ic{1}}\cong\sptw{P_0}[-1]$ by \cite[Thm.~3.6 and Thm.~3.10]{MR4239691}.
    \section{Classification of \texorpdfstring{$\PP$}{P}-objects in \texorpdfstring{$\perv{\PP^n}$}{Perv(Pn)}}\label{PObjectClassification}
        In this section we classify the $\PP$-objects and $\PP$-like objects in $\perv{\PP^n}\subset \Dbc(\PP^n)$.
        For this we first determine the indecomposable Calabi--Yau objects.
        After that, we introduce certain string objects, and show that all of them are $\proj$-like.
        \subsection{Calabi--Yau objects}\label{CYclassification}
            The following easy lemma provides obstructions for Calabi--Yau objects in the presence of projective-injective objects.
            \begin{lemma}\label{projinjCY}
                Let $\cat{A}$ be a finite-length Hom-finite Krull--Schmidt abelian category with enough projectives and finite global dimension, and let $P\in\cat{A}$ be an indecomposable projective-injective object.
                \begin{enumerate}
                    \item If $P$ has isomorphic top and socle, then $P$ is $0$-Calabi--Yau in $\Db(\cat{A})$. 
                    \item If $X\in\cat{A}$ involves a composition factor $\mathrm{top}(P)$ or $\mathrm{soc}(P)$, then $X$ cannot be $d$-Calabi--Yau for $d>0$.
                \end{enumerate}
            \end{lemma}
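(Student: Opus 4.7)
The plan is to exploit the fact that projective-injectivity of $P$ collapses Hom-spaces in $\Db(\cat{A})$ to their degree-zero parts, reducing Calabi--Yau properties to statements inside $\cat{A}$.

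\textbf{Part (1).} For any $X\in\Db(\cat{A})$ the exactness of $\Hom_\cat{A}(P,-)$ (from $P$ projective) and of $\Hom_\cat{A}(-,P)$ (from $P$ injective) gives
\begin{equation*}
    \Hom_{\Db(\cat{A})}(P,X[r])\cong\Hom_\cat{A}(P,H^rX),\qquad \Hom_{\Db(\cat{A})}(X,P[r])\cong\Hom_\cat{A}(H^{-r}X,P),
\end{equation*}
so the $0$-Calabi--Yau property for $P$ reduces to producing a natural isomorphism $\Hom_\cat{A}(P,M)\cong\Hom_\cat{A}(M,P)^\vee$ on $\cat{A}$. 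Since $\cat{A}$ satisfies the hypotheses of \cref{fdalgserrefunctor}, it admits a Nakayama functor $\nu$ with the classical natural isomorphism $\Hom_\cat{A}(P,M)\cong\Hom_\cat{A}(M,\nu P)^\vee$ for projective $P$. By assumption $P$ is indecomposable injective with socle isomorphic to $\mathrm{top}(P)$, so $\nu P=I_{\mathrm{top}(P)}=P$ and the Nakayama isomorphism supplies the required duality.

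\textbf{Part (2).} Setting $L_1=\mathrm{top}(P)$ and $L_2=\mathrm{soc}(P)$, so that $P=P_{L_1}=I_{L_2}$, the exactness of $\Hom_\cat{A}(P,-)$ and of $\Hom_\cat{A}(-,P)$ together with induction on composition length gives
\begin{equation*}
    \dim\Hom_\cat{A}(P,X)=[X:L_1]\quad\text{and}\quad \dim\Hom_\cat{A}(X,P)=[X:L_2].
\end{equation*}
If $X$ were $d$-Calabi--Yau with $d>0$, then applying the defining isomorphism with $Y=P$ gives
\begin{equation*}
    \Hom_\cat{A}(X,P)\cong \Hom_{\Db(\cat{A})}(P,X[d])^\vee=\Ext^d_\cat{A}(P,X)^\vee=0
\end{equation*}
by projectivity of $P$, forcing $[X:L_2]=0$. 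The equivalent formulation $\Hom(Y,X)\cong\Hom(X,Y[d])^\vee$ (obtained by substituting $Y\mapsto Y[d]$ in the defining isomorphism and dualising), applied again with $Y=P$, forces $\Hom_\cat{A}(P,X)\cong\Ext^d_\cat{A}(X,P)^\vee=0$ by injectivity of $P$, hence $[X:L_1]=0$. Either vanishing contradicts the hypothesis of (2).

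The only delicate point is ensuring that the isomorphism in (1) is genuinely natural in the module argument, which is why I would route through the Nakayama functor from Happel's theorem rather than manufacture a duality pairing by hand via composition $\Hom(P,M)\otimes\Hom(M,P)\to\End(P)\to\kk$. Once the Nakayama isomorphism is available, computing $\nu P$ for an indecomposable projective-injective $P$ with isomorphic top and socle is immediate, and part (2) then reduces to the elementary dimension count together with the vanishing of higher $\Ext$-groups into or out of a projective-injective object.
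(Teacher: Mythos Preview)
Your proof is correct and follows essentially the same route as the paper: for (1) both invoke the Nakayama functor (the paper just says ``the Serre functor, which is the Nakayama functor, fixes such projective-injective objects''), and for (2) both test the Calabi--Yau property against $P$ and use that $\Ext^d_\cat{A}(P,-)=0=\Ext^d_\cat{A}(-,P)$ for $d>0$. The only cosmetic difference is that the paper phrases (2) via non-degeneracy of the composition pairing rather than the duality isomorphism; your version, working directly from the definition $\Hom(X,-)\cong\Hom(-,X[d])^\vee$, is arguably cleaner.
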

            \begin{proof}\leavevmode
                \begin{enumerate}
                    \item This is clear since the Serre functor, which is the Nakayama functor, fixes such projective-injective objects.
                    \item Let $P\in\cat{A}$ be projective-injective and assume that $X\in\cat{A}$ is $d$-Calabi--Yau with $d>0$.
                        By definition, this means that the composition pairing
                        \begin{equation*}
                            \Hom_{\Db(\cat{A})}(P,X[r])\otimes\Hom_{\Db(\cat{A})}(X,P[d-r])\to\Hom_{\Db(\cat{A})}(X,X[d])
                        \end{equation*}
                        is non-degenerate for all $r\in\Z$.
                        If $X$ involves a simple subquotient $\mathrm{top}(P)$, then for $r=0$ the first tensor factor is non-zero while the second one is not.
                        If $X$ involves a simple subquotient $\mathrm{soc}(P)$, then for $r=d$ the second tensor factor is non-zero while the first is not.
                        Thus in these cases the pairing cannot be non-degenerate, a contradiction.\qedhere
                \end{enumerate}
            \end{proof}
            As an application, we recover the classification of the indecomposable Calabi--Yau objects in $\perv{\PP^n}$, which was obtained algebraically in \cite[\S 7.4]{MazorchukVII}.
            \begin{corollary}\label{CY_objects}
                An indecomposable object $E\in\perv{\PP^n}$ is Calabi--Yau if and only if $E\in\{\ic{n}\}\cup\{P_i\mid 0\leq i\leq n-1\}$.
            \end{corollary}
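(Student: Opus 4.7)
The plan is to prove both implications using the Calabi--Yau classification of simples from \cref{simples_P-obj} together with the obstruction \cref{projinjCY}.

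For the ``if'' direction, I would first invoke \cref{simples_P-obj}(2), which immediately yields that $\ic{n}$ is $2n$-Calabi--Yau. Then for each $P_i$ with $0 \leq i \leq n-1$, \cref{proj_inj} shows that $P_i$ is projective-injective with top $\ic{i}$, and since it is also the injective hull of $\ic{i}$ its socle equals $\ic{i}$ as well; applying \cref{projinjCY}(1) makes $P_i$ a $0$-Calabi--Yau object.

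For the ``only if'' direction, let $E \in \perv{\PP^n}$ be indecomposable and $d$-Calabi--Yau for some $d$. I would split on the value of $d$. When $d > 0$: the indecomposable projective-injectives $P_0, \ldots, P_{n-1}$ have tops and socles $\ic{0}, \ldots, \ic{n-1}$, so \cref{projinjCY}(2) forces every composition factor of $E$ to be $\ic{n}$; combined with $\Ext^1_{\perv{\PP^n}}(\ic{n}, \ic{n}) \cong \Hom_{\PP^n}(\ic{n}, \ic{n}[1]) = H^1(\PP^n) = 0$ from \cref{maps_between_simples} and the faithfulness of the perverse heart, this forces $E$ to be semisimple, and indecomposability then pins down $E \cong \ic{n}$. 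When $d = 0$: testing the Calabi--Yau isomorphism $\Hom(E, Y) \cong \Hom(Y, E)^\vee$ against $Y = F[1]$ and $Y = F[-1]$ for $F \in \perv{\PP^n}$ shows that one of the two Hom spaces in each pair vanishes (as $\perv{\PP^n}$ is the heart of a $t$-structure), forcing $\Ext^1_{\perv{\PP^n}}(E, F) = 0 = \Ext^1_{\perv{\PP^n}}(F, E)$ for all $F$; hence $E$ is both projective and injective and so lies in $\{P_0, \ldots, P_{n-1}\}$ by \cref{proj_inj}.

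The main subtlety is the $d = 0$ case, since \cref{projinjCY}(2) only provides an obstruction when $d > 0$; but the $t$-structure argument sketched above handles it without additional machinery. Everything else is routine bookkeeping once the composition factors of an indecomposable have been restricted to the single simple $\ic{n}$, using only facts already established in \cref{maps_between_simples,proj_inj,simples_P-obj}.
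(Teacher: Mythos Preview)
Your proof is correct and follows essentially the same approach as the paper's: use \cref{simples_P-obj} and \cref{projinjCY}(1) for the ``if'' direction, and for the ``only if'' direction split on $d$, using \cref{projinjCY}(2) when $d>0$ and a direct $t$-structure argument when $d=0$. One small omission: you do not address the case $d<0$, which the paper dismisses as obvious (if $E$ in the heart were $d$-Calabi--Yau with $d<0$, then $\Hom(E,E)\cong\Hom(E,E[d])^\vee=0$, contradicting $\id_E\neq 0$). Your $d=0$ argument is in fact slightly sharper than the paper's, since you deduce both projectivity and injectivity, thereby directly excluding $P_n=\stan{n}$; the paper only argues projectivity and leaves the exclusion of $P_n$ implicit.
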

            \begin{proof}
                The simple object $\ic{n}$ is $2n$-Calabi--Yau by \cref{simples_P-obj}, while the projective-injective objects $P_i\in\perv{\PP^n}$ for $0\leq i\leq n-1$ are $0$-Calabi--Yau by \cref{projinjCY}.

                It is obvious that objects in the heart of a $t$-structure cannot be $d$-Calabi--Yau for $d<0$, and that only projective objects in the heart can be $0$-Calabi--Yau: if $E\in\perv{\proj^n}$ is $0$-Calabi--Yau, then $\Hom_{\proj^n}(E,X[1])\cong\Hom_{\proj^n}(X[1],E)^\vee=0$ for all $X\in\perv{\proj^n}$, so $E$ has to be projective.
                Moreover, by \cref{projinjCY} no object in $\perv{\proj^n}$ involving simple subquotients $\ic{k}$ for $0\leq k<n$ can be $d$-Calabi--Yau with $d>0$.
                As $\Hom_{\proj^n}(\ic{n},\ic{n}[1])=0$, the only indecomposable object such that all of its simple subquotients are $\ic{n}$ is $\ic{n}$ itself.
            \end{proof}
            Alternatively, in the proof of \cref{CY_objects} one can also use the classification of indecomposable perverse sheaves (using the language of finite-dimensional algebras), and \cite[Prop.~2]{MR4090927} or \cref{zigzaghoms} below, to show that there are no $0$-Calabi--Yau objects besides the projective-injective objects.
        \subsection{String objects}\label{strings}
            For $0\leq b\leq a\leq n$ we recursively define the \emph{string objects} $\zz{+}{a}{b}$ as follows.
            Set $\zz{+}{a}{a}=\ic{a}$ and $\zz{+}{a}{a-1}=\stan{a}$, and for $a\geq b+2$ define $\zz{+}{a}{b}=\cone{\psi_{a-2,b}}[-1]$, where $\psi_{a-2,b}\colon\zz{+}{a-2}{b}\to\stan{a}[1]$ is a non-zero morphism that will be fixed recursively.
            Hence $\zz{+}{a}{b}$ fits into a triangle
            \begin{equation}
                \label{delta flag of zigzags}
                \stan{a}\longto{\iota_{a,b}}\zz{+}{a}{b}\longto{\pi_{a,b}}\zz{+}{a-2}{b}\longto{\psi_{a-2,b}}\stan{a}[1].
            \end{equation}
            We also define $\zz{-}{a}{b}=\verdier(\zz{+}{a}{b})$; alternatively these can be obtained inductively by the dual construction.
            Note that the string objects $\zz{\pm}{a}{b}$ lie in $\perv{\proj^n}$.
            
            To properly define $\psi_{a-2,b}$, so that $\zz{+}{a}{b}$ is well-defined, we need:
            \begin{lemma}\label{stringwelldef}
                Let $a\geq b+2$ and assume by induction that $\zz{+}{a-2}{b}$ is already defined.
                Then $\Hom_{\proj^n}(\zz{+}{a-2}{b},\stan{a}[1])$ is $1$-dimensional.
            \end{lemma}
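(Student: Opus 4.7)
The plan is to prove the claim by induction on $c:=a-2-b\geq 0$, with the inductive step driven by the long exact sequence obtained from the defining triangle of $\zz{+}{a-2}{b}$. The two base cases are immediate from the earlier morphism computations: when $c=0$, we have $\zz{+}{a-2}{b}=\ic{b}$, and \cref{mor_simple_stand} (applied with $l=b$, $k=a=b+2$, $r=1=k-l-1$) gives $\Hom_{\proj^n}(\ic{b},\stan{b+2}[1])\cong\kk$; when $c=1$, we have $\zz{+}{a-2}{b}=\stan{b+1}$, and \cref{DeltaHoms} (applied with $k=b+1$, $l=a=b+3$, $r=1=l-k-1$) gives $\Hom_{\proj^n}(\stan{b+1},\stan{b+3}[1])\cong\kk$.

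For the inductive step ($c\geq 2$, i.e.~$a-2\geq b+2$), I would apply $\Hom_{\proj^n}(-,\stan{a}[1])$ to the defining triangle $\stan{a-2}\to\zz{+}{a-2}{b}\to\zz{+}{a-4}{b}\to\stan{a-2}[1]$ and read off the portion
\begin{equation*}
    \Hom(\stan{a-2},\stan{a})\to\Hom(\zz{+}{a-4}{b},\stan{a}[1])\to\Hom(\zz{+}{a-2}{b},\stan{a}[1])\to\Hom(\stan{a-2},\stan{a}[1])\to\Hom(\zz{+}{a-4}{b},\stan{a}[2]).
\end{equation*}
By \cref{DeltaHoms}, the leftmost term vanishes (since $r=0\notin\{1,2\}$) and the fourth term is $1$-dimensional (since $r=1=l-k-1$). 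To conclude $\Hom(\zz{+}{a-2}{b},\stan{a}[1])\cong\kk$, I must therefore establish two auxiliary vanishings: $\Hom(\zz{+}{a-4}{b},\stan{a}[1])=0$ and $\Hom(\zz{+}{a-4}{b},\stan{a}[2])=0$.

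Both vanishings will be handled uniformly by a secondary inductive argument: for fixed target $\stan{a}$ and fixed $r\in\{1,2\}$, I prove that $\Hom(\zz{+}{c'}{b},\stan{a}[r])=0$ for all $c'$ with $b\leq c'\leq a-4$. The reason is that applying $\Hom(-,\stan{a}[r])$ to the defining triangle of $\zz{+}{c'}{b}$ and using \cref{DeltaHoms} shows that the terms $\Hom(\stan{c'},\stan{a}[r'])$ vanish for all $r'\in\{0,1,2\}$, because the required degrees $\{a-c'-1,a-c'\}$ are bounded below by $3$ under the hypothesis $c'\leq a-4$. Consequently the LES collapses to an isomorphism $\Hom(\zz{+}{c'}{b},\stan{a}[r])\cong\Hom(\zz{+}{c'-2}{b},\stan{a}[r])$, and iteration reduces to the base cases $\zz{+}{b}{b}=\ic{b}$ and $\zz{+}{b+1}{b}=\stan{b+1}$, where direct application of \cref{mor_simple_stand,DeltaHoms} yields zero (the required degrees are again at least $3$, incompatible with $r\in\{1,2\}$).

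The main obstacle is that a single-variable induction on $c$ using only the defining triangle does not close: the LES involves the ``off-diagonal'' Hom space $\Hom(\zz{+}{a-4}{b},\stan{a}[2])$, which is not of the same shape as the inductive hypothesis. The fix is the secondary iterated LES argument above, where the key observation is that once the target standard is sufficiently far above the indices appearing in the string, the morphism spaces from its $\stan{}$-subquotients vanish in low degree, letting the iteration propagate the vanishing all the way to the base case.
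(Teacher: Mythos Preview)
Your proposal is correct and follows essentially the same approach as the paper: the base cases and the long exact sequence for the inductive step are identical, and your ``secondary inductive argument'' establishing $\Hom_{\proj^n}(\zz{+}{a-4}{b},\stan{a}[r])=0$ for $r\in\{1,2\}$ is precisely what the paper compresses into the phrase ``from the construction of $\zz{+}{a-4}{b}$ and \cref{DeltaHoms,mor_simple_stand}'' (i.e.~unwinding the $\stan{}$-flag of $\zz{+}{a-4}{b}$ and using that all the subquotients $\stan{c'}$ have $c'\leq a-4$).
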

            \begin{proof}
                For $a=b+2$ and $a=b+3$, the claim follows from \cref{mor_simple_stand} respectively \cref{DeltaHoms}.

                For $a>b+3$ we apply $\Hom_{\proj^n}(-,\stan{a}[1])$ to the triangle \cref{delta flag of zigzags}, which by induction is unique up to rescaling of the morphisms.
                From the construction of $\zz{+}{a-4}{b}$ and \cref{DeltaHoms,mor_simple_stand} it follows that $\Hom_{\proj^n}(\zz{+}{a-4}{b},\stan{a}[r])=0$ for $r\leq 2$, and thus
                \begin{equation*}
                    \Hom_{\proj^n}(\zz{+}{a-2}{b},\stan{a}[1])\cong\Hom_{\proj^n}(\stan{a-2},\stan{a}[1]).
                \end{equation*}
                By \cref{DeltaHoms}, this is $1$-dimensional, as claimed.
            \end{proof}
            From the proof we obtain the following explicit definition of a canonical non-zero morphism $\psi_{a-2,b}\colon \zz{+}{a-2}{b}\to\stan{a}[1]$:
            \begin{itemize}
                \item For $a=b+2$, we take $\psi_{a-2,a-2}=\phi^1_{a-2,a}\colon\ic{a-2}\to\stan{a}[1]$.
                \item For $a=b+3$, we define the morphism $\psi_{a-2,a-3}$ as the composition $\stan{a-2}\longto{\mu_{a-2,a-1}}\ic{a-1}[1]\longto{\phi^0_{a-1,a}}\stan{a}[1]$.
                \item For $a>b+3$, the morphism $\psi_{a-2,b}\colon\zz{+}{a-2}{b}\to\stan{a}[1]$ is uniquely defined by the commutative diagram
                    \begin{equation*}
                        \begin{tikzcd}[column sep=width("aaaaaaa")]
                            \zz{+}{a-2}{b}\arrow[dashed]{r}{\psi_{a-2,b}}
                            &\stan{a}[1]\\
                            \stan{a-2}\rlap{,}\arrow{u}{\iota_{a-2,b}}\arrow{ru}[swap]{\delta_{a-2,a}^1}&
                        \end{tikzcd}
                    \end{equation*}
                    where $\iota_{a-2,b}$ is fixed by the choice of the triangle \cref{delta flag of zigzags} defining $\zz{+}{a-2}{b}$.
            \end{itemize}
            This completes the construction of the string objects $\zz{\pm}{a}{b}$.
            
            In terms of finite-dimensional algebras, the string objects as defined above are precisely the \emph{string modules} mentioned in \cref{otherperspectives}.
        \subsection{String objects are \texorpdfstring{$\PP$}{P}-like}\label{stringmorphisms}
            We show that all the string objects $\zz{\pm}{a}{b}$ are $\proj^k$-like, where $k$ depends on $a$ and $b$ by an explicit formula.
            Note that with the exception of $\zz{\pm}{n}{n}=\ic{n}$, the string objects cannot be $\proj^k$-objects, since \cref{CY_objects} obstructs them from having the Calabi--Yau property.
            \subsubsection{Morphisms between string objects and IC sheaves}\label{prel_results}
                We want to understand the total endomorphism spaces $\End_{\proj^n}^*(\zz{\pm}{a}{b})$.
                Due to the inductive definition of the string objects, we first need to compute morphisms between $\zz{\pm}{a}{b}$ and some IC sheaves.
                \begin{lemma}\label{HomZZplusIC}
                    Let $0\leq b\leq a\leq n$ and $r\in\Z$.
                    \begin{enumerate}
                        \item If $a-b$ is even, then
                            \begin{equation*}
                                \Hom_{\proj^n}(\zz{+}{a}{b},\ic{b}[r])\cong\Hom_{\proj^n}(\ic{b},\zz{-}{a}{b}[r])\cong\begin{cases}
                                    \kk&\text{if $0\leq r\leq 2b$ and $r$ even},\\
                                    0&\text{otherwise}.
                                \end{cases}
                            \end{equation*}
                        \item If $a-b$ is odd, then
                            \begin{equation*}
                                \Hom_{\proj^n}(\zz{+}{a}{b},\ic{b}[r])\cong\Hom_{\proj^n}(\ic{b},\zz{-}{a}{b}[r])\cong 0.
                            \end{equation*}
                    \end{enumerate}
                \end{lemma}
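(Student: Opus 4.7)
I would prove the lemma by induction on $a-b \geq 0$, splitting naturally according to the two base cases and then using the defining triangle for the inductive step.

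For the base cases, when $a=b$ we have $\zz{+}{b}{b}=\ic{b}$, so $\Hom_{\proj^n}(\zz{+}{b}{b},\ic{b}[r])\cong\End_{\proj^n}^r(\ic{b})$ has the claimed shape by \cref{maps_between_simples}. When $a=b+1$, we have $\zz{+}{b+1}{b}=\stan{b+1}$, and $\Hom_{\proj^n}(\stan{b+1},\ic{b}[r])=0$ for all $r$ by \cref{topIC_to_lower_nabla} (since $b<b+1$).

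For the inductive step with $a\geq b+2$, the plan is to apply the contravariant functor $\Hom_{\proj^n}(-,\ic{b}[r])$ to the defining triangle \cref{delta flag of zigzags}:
\begin{equation*}
    \stan{a}\longto{\iota_{a,b}}\zz{+}{a}{b}\longto{\pi_{a,b}}\zz{+}{a-2}{b}\longto{\psi_{a-2,b}}\stan{a}[1].
\end{equation*}
This yields a long exact sequence whose two outer terms around $\Hom_{\proj^n}(\zz{+}{a}{b},\ic{b}[r])$ are $\Hom_{\proj^n}(\stan{a},\ic{b}[r-1])$ and $\Hom_{\proj^n}(\stan{a},\ic{b}[r])$. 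Since $a\geq b+2>b$, both vanish by \cref{topIC_to_lower_nabla}, and we obtain a canonical isomorphism $\Hom_{\proj^n}(\zz{+}{a}{b},\ic{b}[r])\cong\Hom_{\proj^n}(\zz{+}{a-2}{b},\ic{b}[r])$. Since $a-b$ and $(a-2)-b$ have the same parity, the induction closes: when $a-b$ is even we descend to the case $a=b$, and when $a-b$ is odd we descend to $a=b+1$.

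The second identification $\Hom_{\proj^n}(\zz{+}{a}{b},\ic{b}[r])\cong\Hom_{\proj^n}(\ic{b},\zz{-}{a}{b}[r])$ is then immediate from Verdier duality. Indeed, $\verdier$ is an antiequivalence satisfying $\verdier\circ[s]=[-s]\circ\verdier$, $\verdier(\ic{b})\cong\ic{b}$ and $\verdier(\zz{+}{a}{b})=\zz{-}{a}{b}$ by construction, hence
\begin{equation*}
    \Hom_{\proj^n}(\zz{+}{a}{b},\ic{b}[r])\cong\Hom_{\proj^n}(\ic{b}[-r],\zz{-}{a}{b})\cong\Hom_{\proj^n}(\ic{b},\zz{-}{a}{b}[r]).
\end{equation*}

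There is no real obstacle here: the crux is the vanishing of $\Hom_{\proj^n}(\stan{a},\ic{b}[\ast])$ for $a>b$, which collapses the long exact sequence to the single isomorphism driving the induction. The one mild subtlety is confirming that the defining triangle can be used at every inductive stage, which is guaranteed by \cref{stringwelldef} and the construction of $\psi_{a-2,b}$ in \cref{strings}.
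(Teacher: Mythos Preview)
Your proof is correct and follows essentially the same approach as the paper: induction on $a-b$ with base cases $\zz{+}{b}{b}=\ic{b}$ and $\zz{+}{b+1}{b}=\stan{b+1}$, and the inductive step via the long exact sequence from the defining triangle \cref{delta flag of zigzags}, collapsed by the vanishing $\Hom_{\proj^n}(\stan{a},\ic{b}[r])=0$ from \cref{topIC_to_lower_nabla}. The only cosmetic difference is that the paper invokes Verdier duality up front to reduce to the first Hom space, whereas you spell it out at the end.
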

                \begin{proof}
                    In both cases the first isomorphism is given by Verdier duality, so it suffices to compute $\Hom_{\proj^n}(\zz{+}{a}{b},\ic{b}[r])$.
    
                    We prove both statements by induction on $a-b$.
                    As the argument for the inductive step is the same in both cases, we only give details for the first statement.
                    \begin{enumerate}
                        \item For the base case $a=b$, we have $\zz{+}{a}{b}=\ic{b}$ and the claim is immediate by \cref{maps_between_simples}.
                    
                            For the inductive step, we apply the functor $\Hom_{\proj^n}(-,\ic{b})$ to \cref{delta flag of zigzags} to get the long exact sequence
                            \[
                                \ldots \to \Hom_{\proj^n}(\zz{+}{a-2}{b},\ic{b}[r]) \to \Hom_{\proj^n}(\zz{+}{a}{b},\ic{b}[r]) \to \Hom_{\proj^n}(\stan{a},\ic{b}[r]) \to \ldots
                            \]
                            As $a>b$, we have $\Hom_{\proj^n}(\stan{a},\ic{b}[r])=0$ for all $r$ by \cref{topIC_to_lower_nabla}.
                            Hence
                            \begin{equation*}
                                \Hom_{\proj^n}(\zz{+}{a}{b},\ic{b}[r])\cong\Hom_{\proj^n}(\zz{+}{a-2}{b},\ic{b}[r]),
                            \end{equation*}
                            and so the statement follows from the inductive hypothesis.
                        \item For the base case $a=b+1$, we have $\zz{+}{a}{b}=\stan{a}$ and $\Hom_{\proj^n}(\stan{a},\ic{b}[r])=0$ for any $r$ by \cref{topIC_to_lower_nabla}.
                        The claim follows from this by the same arguments as in the first part.\qedhere
                    \end{enumerate}
                \end{proof}
                The proof for $a-b$ even also yields a canonical non-zero morphism $m_{a,b}^r\colon\zz{+}{a}{b}\to\ic{b}[r]$, which is defined as the composition
                \begin{equation*}
                    \zz{+}{a}{b}\longto{\pi_{a,b}}\zz{+}{a-2}{b}\longto{\pi_{a-2,b}}\dots\longto{\pi_{b+2,b}}\ic{b}\longto{\epsilon^r_{b,b}}\ic{b}[r].
                \end{equation*}
                Since $\epsilon^{r+2}_{b,b}=\epsilon^2_{b,b}\epsilon^r_{b,b}$, we also obtain the commutative diagram
                \begin{equation}
                    \label{HomZZplusICmodule}
                    \begin{tikzcd}
                        \zz{+}{a}{b}\arrow{d}[swap]{m_{a,b}^r}\arrow{rd}{m_{a,b}^{r+2}}&\\
                        \ic{b}[r]\arrow{r}{\epsilon^2_{b,b}}
                        &\ic{b}[r+2]\rlap{.}
                    \end{tikzcd}
                \end{equation}
                \begin{lemma}\label{HomZZtopIC}
                    Let $0\leq b\leq a\leq n$ and $r\in\Z$.
                    \begin{enumerate}
                        \item If $a-b$ is even, then
                            \begin{equation*}
                                \Hom_{\proj^n}(\zz{+}{a}{b},\ic{a}[r])\cong\Hom_{\proj^n}(\ic{a},\zz{-}{a}{b}[r])\cong\begin{cases}
                                    \kk&\text{if $0\leq r\leq a+b$ and $r$ even},\\
                                    0&\text{otherwise}.
                                \end{cases}
                            \end{equation*}
                        \item If $a-b$ is odd, then
                            \begin{equation*}
                                \Hom_{\proj^n}(\zz{+}{a}{b},\ic{a}[r])\cong\Hom_{\proj^n}(\ic{a},\zz{-}{a}{b}[r])\cong\begin{cases}
                                    \kk&\text{if $0\leq r\leq a-b-1$ and $r$ even},\\
                                    0&\text{otherwise}.
                                \end{cases}
                            \end{equation*}
                        \item For $0\leq i\leq n-a$ we have
                            \begin{equation*}
                                \Hom_{\proj^n}(\zz{+}{a}{b},\ic{a+i}[r])\cong\Hom_{\proj^n}(\zz{+}{a}{b},\ic{a}[r-i]).
                            \end{equation*}
                    \end{enumerate}
                \end{lemma}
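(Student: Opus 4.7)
The argument splits naturally: I would first establish the reduction (3), and then deploy it together with the inductive structure of the string objects to compute the Hom spaces in (1) and (2).

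For part (3), I would use that $\zz{+}{a}{b}$ is supported on $\proj^a$. Since the inclusion $\imath_a\colon \proj^a\hookto\proj^{a+i}$ is a closed embedding of smooth varieties of codimension $i$, one has $\imath_a^!\ul{\kk}_{\proj^{a+i}}\cong\ul{\kk}_{\proj^a}[-2i]$ and hence $\imath_a^!\ic{a+i}\cong\ic{a}[-i]$. Combined with the adjunction $(\imath_{a,*},\imath_a^!)$, this yields
\begin{equation*}
    \Hom_{\proj^n}(\zz{+}{a}{b},\ic{a+i}[r])\cong\Hom_{\proj^a}(\zz{+}{a}{b},\imath_a^!\ic{a+i}[r])\cong\Hom_{\proj^n}(\zz{+}{a}{b},\ic{a}[r-i]).
\end{equation*}

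For parts (1) and (2), I would first note that Verdier duality identifies $\Hom_{\proj^n}(\zz{+}{a}{b},\ic{a}[r])$ with $\Hom_{\proj^n}(\ic{a},\zz{-}{a}{b}[r])$, so it suffices to compute the former. I would then proceed by induction on $a-b$. The base cases $a=b$ and $a=b+1$ give $\zz{+}{a}{b}=\ic{a}$ and $\zz{+}{a}{b}=\stan{a}$, respectively, which are handled directly by \cref{maps_between_simples} and \cref{topIC_to_lower_nabla} and match the stated formulas (noting $a+b=2a$ in the even base case and $a-b-1=0$ in the odd base case).

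For the inductive step, I would apply $\Hom_{\proj^n}(-,\ic{a}[r])$ to the triangle \cref{delta flag of zigzags} defining $\zz{+}{a}{b}$ to obtain a long exact sequence. By \cref{topIC_to_lower_nabla} the terms $\Hom_{\proj^n}(\stan{a},\ic{a}[s])$ vanish unless $s=0$. The terms of the form $\Hom_{\proj^n}(\zz{+}{a-2}{b},\ic{a}[s])$ are reduced via part (3) to $\Hom_{\proj^n}(\zz{+}{a-2}{b},\ic{a-2}[s-2])$, which is controlled by the inductive hypothesis (since $(a-2)-b$ has the same parity as $a-b$). A short case analysis on the parity of $r$ then shows that the flanking terms in the long exact sequence vanish outside isolated degrees, so the result is essentially transferred from $\zz{+}{a-2}{b}$, with the $r=0$ case picking up the extra $\kk$ contributed by $\Hom_{\proj^n}(\stan{a},\ic{a})$. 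This yields the ranges $0\leq r\leq a+b$ in the even case and $0\leq r\leq a-b-1$ in the odd case, with non-vanishing concentrated in even shifts.

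The main routine but slightly delicate step is bookkeeping in the case analysis to confirm that the upper bound increases by exactly $2$ from $\zz{+}{a-2}{b}$ to $\zz{+}{a}{b}$ in the even case (from $a+b-2$ to $a+b$), while remaining $a-b-1$ in the odd case, and that the $r=0$ contribution from $\stan{a}$ is always present. Neither obstacle is serious; the only real content beyond adjunction is the use of part (3) to fold a "higher" IC sheaf into the form covered by induction.
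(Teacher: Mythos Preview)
Your proposal is correct and follows essentially the same route as the paper: Verdier duality to reduce to one direction, induction on $a-b$ with base cases handled by \cref{maps_between_simples} and \cref{topIC_to_lower_nabla}, the long exact sequence from the triangle \cref{delta flag of zigzags}, and the identification $\imath_a^!\ic{a+i}\cong\ic{a}[-i]$ both for part (3) and to fold $\Hom_{\proj^n}(\zz{+}{a-2}{b},\ic{a}[r])$ into the inductive hypothesis. The only cosmetic difference is that you prove (3) first and invoke it explicitly in the inductive step, whereas the paper states (3) last and uses the restriction directly; also, your bookkeeping remark that the upper bound ``remains $a-b-1$'' in the odd case is slightly misleading (it goes from $a-b-3$ for $\zz{+}{a-2}{b}$ to $a-b-1$ for $\zz{+}{a}{b}$, again increasing by $2$ after the shift), but this does not affect the argument.
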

                \begin{proof}
                    For (1) and (2), by Verdier duality it suffices to compute $\Hom_{\proj^n}(\zz{+}{a}{b},\ic{a}[r])$.
                    We prove the claim by induction on $a-b$.
                    The base cases are $a=b$ for $a-b$ even, respectively $a=b+1$ for $a-b$ odd, and for these the claim holds by \cref{maps_between_simples} and \cref{mor_simple_stand}, respectively.
                
                    For the inductive step, we apply $\Hom_{\proj^n}(-,\ic{a})$ to \cref{delta flag of zigzags} to get the long exact sequence
                    \begin{equation*}
                        \dots\to\Hom_{\proj^n}(\zz{+}{a-2}{b},\ic{a}[r])\to\Hom_{\proj^n}(\zz{+}{a}{b},\ic{a}[r])\to\Hom_{\proj^n}(\stan{a},\ic{a}[r])\to\dots.
                    \end{equation*}
                    By \cref{mor_simple_stand}, $\Hom_{\proj^n}(\stan{a},\ic{a}[r])$ is $1$-dimensional for $r=0$ and vanishes otherwise.
                    By restriction to $\proj^{a-2}$, we have $\Hom_{\proj^n}(\zz{+}{a-2}{b},\ic{a}[r])\cong\Hom_{\proj^n}(\zz{+}{a-2}{b},\ic{a-2}[r-2])$.
                    In particular, this is concentrated in degrees $\geq 2$, and hence the claim follows.
                    
                    The last claim is immediate from $\imath_a^!\ic{a+i}\cong\ic{a}[-i]$.
                \end{proof}
                \begin{remark}\label{HomZZtopICdiag}
                    From the proof of \cref{HomZZtopIC} we obtain the following explicit description of a morphism $n^r_{a,b}$ spanning $\Hom_{\proj^n}(\zz{+}{a}{b},\ic{a}[r])$.
                    For $r=0$,  we define $n^0_{a,b}\colon \zz{+}{a}{b}\to\ic{a}$ as the unique morphism making the diagram
                    \begin{equation*}
                        \begin{tikzcd}[column sep=width("aaaaaaa")]
                            \zz{+}{a-r}{b}\arrow[dashed]{r}{n^0_{a-r,b}}
                            &\ic{a-r}\\
                            \stan{a-r}\arrow{u}{\iota_{a-r,b}}\arrow{ru}[swap]{\mu_{a-r,a-r}}&
                        \end{tikzcd}
                    \end{equation*}
                    commute.
                    For $0<r\leq a-b$, we define $n^r_{a,b}\colon \zz{+}{a}{b}\to\ic{a}[r]$ as the composition
                    \begin{equation*}
                        \zz{+}{a}{b}\longto{\pi_{a,b}}\zz{+}{a-2}{b}\longto{\pi_{a-2,b}}\dots\longto{\pi_{a-r+2,b}}\zz{+}{a-r}{b}\longto{n^0_{a-r,b}}\ic{a-r}\longto{\epsilon^r_{a-r,a}}\ic{a}[r].
                    \end{equation*}
                    For $r>a-b$ (this can only happen if $a-b$ is even), we define $n^r_{a,b}$ as the composition
                    \begin{equation*}
                        \zz{+}{a}{b}\longto{\pi_{a,b}}\zz{+}{a-2}{b}\longto{\pi_{a-2,b}}\dots\longto{\pi_{b+2,b}}\ic{b}\longto{\epsilon^r_{b,a}}\ic{a}[r].
                    \end{equation*}
                    Moreover, for $0\leq i\leq n-a$, a canonical morphism $\zz{+}{a}{b}\to\ic{a+i}[r]$ is given by the composition
                    \begin{equation*}
                        \zz{+}{a}{b}\longto{n^{r-i}_{a,b}}\ic{a}[r-i]\longto{\epsilon^i_{a,a+i}}\ic{a+i}[r].
                    \end{equation*}
                \end{remark}
                \begin{remark}\label{2SESsrem}
                    The proof of \cref{HomZZtopIC} and the octahedral axiom for the composition $n^0_{a,b}\iota_{a,b}=\mu_{a,a}$ also yields triangles
                    \begin{equation}\label{2SESs}
                        \zz{-}{a-1}{b}\to \zz{+}{a}{b}\longto{n^0_{a,b}}\ic{a}\to\zz{-}{a-1}{b}[1].
                    \end{equation}
                    By Verdier duality one also obtains triangles $\ic{a}\to\zz{-}{a}{b}\to\zz{+}{a-1}{b}\to\ic{a}[1]$.
                \end{remark}
                \begin{lemma}\label{HomZZtopICmodule}
                    Let $0\leq b\leq a\leq n$ and $0\leq r\leq a+b-2$ if $a-b$ is even, respectively $0\leq r\leq a-b-3$ if $a-b$ is odd.
                    Then the diagram
                    \begin{equation*}
                        \begin{tikzcd}
                            \zz{+}{a}{b}\arrow{r}{n_{a,b}^r}\arrow{rd}[swap]{n_{a,b}^{r+2}}
                            &\ic{a}[r]\arrow{d}{\epsilon^2_{a,a}}\\
                            &\ic{a}[r+2]
                        \end{tikzcd}
                    \end{equation*}
                    commutes up to a non-zero scalar.
                \end{lemma}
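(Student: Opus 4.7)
The plan is as follows. By \cref{HomZZtopIC}, for $r$ in the asserted range (and of the correct parity so that $n^r_{a,b}\neq 0$) the target Hom space $\Hom_{\proj^n}(\zz{+}{a}{b},\ic{a}[r+2])$ is one-dimensional and spanned by $n^{r+2}_{a,b}$; hence it suffices to show $\epsilon^2_{a,a}\circ n^r_{a,b}\neq 0$.

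I would first reduce the general case to $r=0$. The key tool is the composition identity
\begin{equation*}
    \epsilon^2_{a,a}\circ\epsilon^r_{a-r,a}=\epsilon^{r+2}_{a-r,a}=\epsilon^r_{a-r,a}\circ\epsilon^2_{a-r,a-r},
\end{equation*}
which follows from the coherence relation \cref{coherencerelation} satisfied by the IC morphisms. For $r\geq a-b$ (possible only when $a-b$ is even), the formula $n^r_{a,b}=\epsilon^{r-(a-b)}_{a,a}\circ n^{a-b}_{a,b}$ gives $\epsilon^2_{a,a}\circ n^r_{a,b}=n^{r+2}_{a,b}$ on the nose. For $0<r<a-b$, the identity lets me rewrite
\begin{equation*}
    \epsilon^2_{a,a}\circ n^r_{a,b}=\epsilon^r_{a-r,a}\circ(\epsilon^2_{a-r,a-r}\circ n^0_{a-r,b})\circ\pi_{a-r+2,b}\circ\cdots\circ\pi_{a,b};
\end{equation*}
substituting the $r=0$ statement for the pair $(a-r,b)$ and unfolding the definition of $n^{r+2}_{a,b}$ from \cref{HomZZtopICdiag} then matches the two sides up to a non-zero scalar.

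The crux is therefore the case $r=0$: I must show $\epsilon^2_{a,a}\circ n^0_{a,b}\neq 0$ whenever $a-b\geq 2$. My plan is to apply $\Hom_{\proj^n}(-,\ic{a}[2])$ to the triangle $\zz{-}{a-1}{b}\to\zz{+}{a}{b}\longto{n^0_{a,b}}\ic{a}\longto{\gamma}\zz{-}{a-1}{b}[1]$ furnished by \cref{2SESsrem}. Then $\epsilon^2_{a,a}\circ n^0_{a,b}$ is the image of $\epsilon^2_{a,a}$ under pullback by $n^0_{a,b}$, and is non-zero precisely when $\epsilon^2_{a,a}$ avoids the image of the connecting map $\gamma^*\colon\Hom_{\proj^n}(\zz{-}{a-1}{b},\ic{a}[1])\to\Hom_{\proj^n}(\ic{a},\ic{a}[2])$. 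Since the target is one-dimensional, it is enough to check that the source vanishes.

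The main obstacle will be this last vanishing. The plan is to use that $\zz{-}{a-1}{b}$ is supported on $\proj^{a-1}$ together with $\imath_{a-1}^!\ic{a}\cong\ic{a-1}[-1]$ (purity for the smooth closed inclusion $\proj^{a-1}\hookto\proj^a$ of codimension one). Adjunction and Verdier duality on $\proj^{a-1}$ then yield
\begin{equation*}
    \Hom_{\proj^n}(\zz{-}{a-1}{b},\ic{a}[1])\cong\Hom_{\perv{\proj^{a-1}}}(\ic{a-1},\zz{+}{a-1}{b}),
\end{equation*}
which is the multiplicity of $\ic{a-1}$ in the socle of $\zz{+}{a-1}{b}$. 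For $a-1=b+1$, the identification $\zz{+}{b+1}{b}=\stan{b+1}$ together with \cref{mor_simple_stand} forces this multiplicity to be zero. For $a-1\geq b+2$, I would apply $\Hom(\ic{a-1},-)$ to the defining SES $0\to\stan{a-1}\to\zz{+}{a-1}{b}\to\zz{+}{a-3}{b}\to 0$ and observe that $\Hom(\ic{a-1},\stan{a-1})=0$ by \cref{mor_simple_stand}, while $\Hom(\ic{a-1},\zz{+}{a-3}{b})\cong\Ext^{-2}_{\perv{\proj^{a-3}}}(\ic{a-3},\zz{+}{a-3}{b})=0$ upon restriction via $\imath_{a-3}^*\ic{a-1}\cong\ic{a-3}[2]$. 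This sandwich gives the desired vanishing and completes the argument.
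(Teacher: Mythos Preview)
Your proof is correct and follows the same overall strategy as the paper: reduce to $r=0$, then use the triangle \cref{2SESs} and a vanishing argument for a Hom space out of $\zz{-}{a-1}{b}$.

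The details differ in two minor ways. First, your reduction to $r=0$ proceeds by direct manipulation of the composition identities among the $\epsilon$'s and the explicit formulas for $n^r_{a,b}$ from \cref{HomZZtopICdiag}, while the paper instead inducts on $a-b$ and restricts to $\proj^{a-2}$ (using $\imath_{a-2}^!(\epsilon^2_{a,a})=\epsilon^2_{a-2,a-2}$ from \cref{ICdegree2end}) to reduce to the inductive hypothesis for $r>0$. Your route is arguably more elementary, as it avoids the restriction functors. Second, in the $r=0$ case both arguments examine the same long exact sequence obtained from \cref{2SESs}, but you show injectivity of $(n^0_{a,b})^*$ via the vanishing of $\Hom_{\proj^n}(\zz{-}{a-1}{b},\ic{a}[1])$, whereas the paper shows surjectivity via the vanishing of $\Hom_{\proj^n}(\zz{-}{a-1}{b},\ic{a}[2])$; since both Hom spaces in question are one-dimensional these are equivalent. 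Your final vanishing uses the $\stan{}$-flag triangle \cref{delta flag of zigzags} for $\zz{+}{a-1}{b}$ after Verdier duality, while the paper uses the Verdier-dual of \cref{2SESs} directly; again a harmless variation.
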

                \begin{proof}
                    We prove the claim by induction on $a-b$.
                    In the base cases $\End_{\proj^n}^*(\ic{a})$ respectively $\Hom_{\proj^n}^*(\stan{a},\ic{a})$ (depending on the parity of $a-b$) there is nothing to show.
                    
                    For the inductive step, for $r>0$ \cref{HomZZtopIC} and its proof yields the commutative diagram
                    \begin{equation*}
                        \begin{tikzcd}[column sep=small]
                            \Hom_{\proj^n}(\zz{+}{a}{b},\ic{a}[r])\arrow{r}{\cong}\arrow{d}
                            &\Hom_{\proj^n}(\zz{+}{a-2}{b},\ic{a}[r])\arrow{r}{\cong}\arrow{d}
                            &\Hom_{\proj^n}(\zz{+}{a-2}{b},\ic{a-2}[r-2])\arrow{d}\\
                            \Hom_{\proj^n}(\zz{+}{a}{b},\ic{a}[r+2])\arrow{r}{\cong}
                            &\Hom_{\proj^n}(\zz{+}{a-2}{b},\ic{a}[r+2])\arrow{r}{\cong}
                            &\Hom_{\proj^n}(\zz{+}{a-2}{b},\ic{a-2}[r])\rlap{.}
                        \end{tikzcd}
                    \end{equation*}
                    Here the vertical morphisms are given by postcomposition with $\epsilon^2_{a,a}$ and $\imath_{a-2}^!(\epsilon^2_{a,a})$, respectively.
                    By \cref{ICdegree2end} we have $\imath_{a-2}^!(\epsilon^2_{a,a})=\epsilon^2_{a-2,a-2}$, and therefore by induction it only remains to show the claim for $r=0$.
                    
                    For $r=0$, the square
                    \begin{equation*}
                        \begin{tikzcd}
                            \zz{+}{a}{b}\arrow{r}{\pi_{a,b}}\arrow{d}[swap]{n^0_{a,b}}
                            &\zz{+}{a-2}{b}\arrow{d}{n^2_{a-2,b}}\\
                            \ic{a}\arrow{r}{\epsilon^2_{a,a}}
                            &\ic{a}[2]
                        \end{tikzcd}
                    \end{equation*}
                    commutes up to a possibly zero scalar, as $\cone{\pi_{a,b}}=\stan{a}[1]$ and $\Hom_{\proj^n}(\stan{a},\ic{a}[r])=0$ for $r>0$.
                    By \cref{HomZZtopICdiag}, the composition $n^2_{a,b}=n^2_{a-2,b}\pi_{a,b}$ is non-zero, and thus we have to show that it factors through $\epsilon^2_{a,a}\colon \ic{a}\to\ic{a}[2]$.
                    
                    For this we apply $\Hom_{\proj^n}(-,\ic{a})$ to the triangle \cref{2SESs} to get the long exact sequence
                    \begin{equation*}
                        \dots\to\Hom_{\proj^n}(\ic{a},\ic{a}[2])\to \Hom_{\proj^n}(\zz{+}{a}{b},\ic{a}[2])\to \Hom_{\proj^n}(\zz{-}{a-1}{b},\ic{a}[2])\to\dots.
                    \end{equation*}
                    By \cref{maps_between_simples,HomZZtopIC} we know that $\Hom_{\proj^n}(\ic{a},\ic{a}[2])$ is spanned by the generator $\epsilon^2_{a,a}$ of $\End_{\proj^n}^*(\ic{a})$ and $\Hom_{\proj^n}(\zz{+}{a}{b},\ic{a}[2])$ is spanned by the composition $n^2_{a-2,b}\pi_{a,b}$, so it suffices to show $\Hom_{\proj^n}(\zz{-}{a-1}{b},\ic{a}[2])=0$.
                    
                    By restriction to $\proj^{a-1}$, we have $\Hom_{\proj^n}(\zz{-}{a-1}{b},\ic{a}[2])\cong\Hom_{\proj^n}(\zz{-}{a-1}{b},\ic{a-1}[1])$, and that this vanishes can be seen by applying $\Hom_{\proj^n}(-,\ic{a-1})$ to the Verdier-dual version of the triangle \cref{2SESs} defining $\zz{-}{a-1}{b}$.
                \end{proof}
                \begin{lemma}\label{HomZZminusIC}
                    Let $0\leq b\leq a\leq n$ and $r\in\Z$.
                    \begin{enumerate}
                        \item If $a-b$ is even, then
                            \begin{equation*}
                                \Hom_{\proj^n}(\ic{b},\zz{+}{a}{b}[r])\cong\Hom_{\proj^n}(\zz{-}{a}{b},\ic{b}[r])\cong\begin{cases}
                                    \kk&\text{if $a-b\leq r\leq a+b$ and $r$ even},\\
                                    0&\text{otherwise}.
                                \end{cases}
                            \end{equation*}
                        \item If $a-b$ is odd, then
                            \begin{equation*}
                                \Hom_{\proj^n}(\ic{b},\zz{+}{a}{b}[r])\cong\Hom_{\proj^n}(\zz{-}{a}{b},\ic{b}[r])\cong\begin{cases}
                                    \kk&\text{if $0\leq r\leq\min(2b,a-b)$ and $r$ even},\\
                                    \kk&\text{if $\max(2b,a-b)\leq r\leq a+b$ and $r$ odd},\\
                                    0&\text{otherwise}.
                                \end{cases}
                            \end{equation*}
                    \end{enumerate}
                \end{lemma}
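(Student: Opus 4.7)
The plan is to reduce everything to one of the two Hom spaces by Verdier duality, and then proceed by induction on $a-b$ using the triangle \cref{2SESs} from \cref{2SESsrem}. Explicitly, I would apply $\Hom_{\proj^n}(\ic{b},-)$ to the triangle
\[
    \zz{-}{a-1}{b}\to\zz{+}{a}{b}\longto{n^0_{a,b}}\ic{a}\longto{\chi}\zz{-}{a-1}{b}[1],
\]
and use that $\Hom_{\proj^n}(\ic{b},\zz{-}{a-1}{b}[r])\cong\Hom_{\proj^n}(\zz{+}{a-1}{b},\ic{b}[r])$ is already computed by \cref{HomZZplusIC}, while $\Hom_{\proj^n}(\ic{b},\ic{a}[r])$ is \cref{maps_between_simples}. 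The base cases $a=b$ and $a=b+1$ are immediate from those two lemmas.

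For the inductive step in Case~(1), where $a-b$ is even, the parity $(a-1)-b$ is odd, so \cref{HomZZplusIC}(2) forces $\Hom_{\proj^n}(\zz{+}{a-1}{b},\ic{b}[r])=0$ for all $r$; the long exact sequence collapses to $\Hom_{\proj^n}(\ic{b},\zz{+}{a}{b}[r])\cong\Hom_{\proj^n}(\ic{b},\ic{a}[r])$, and the claimed description follows from \cref{maps_between_simples} since $r\equiv a-b\pmod 2$ is equivalent to $r$ even.

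Case~(2), where $a-b$ is odd, is the substantive one. Now $\Hom_{\proj^n}(\ic{b},\zz{-}{a-1}{b}[r])$ is $\kk$ for even $r\in[0,2b]$, and $\Hom_{\proj^n}(\ic{b},\ic{a}[r])$ is $\kk$ for odd $r\in[a-b,a+b]$. A parity analysis of the long exact sequence shows that outside the range $s\in[a-b,2b-1]$ (which is non-empty only when $a\leq 3b-1$) the dimension of $\Hom_{\proj^n}(\ic{b},\zz{+}{a}{b}[r])$ is determined with no contribution from connecting maps, and matches the claim. In the overlap range, however, I need to verify that the connecting map
\[
    \chi_{\ast}\colon\Hom_{\proj^n}(\ic{b},\ic{a}[s])\to\Hom_{\proj^n}(\ic{b},\zz{-}{a-1}{b}[s+1])
\]
is an isomorphism whenever $s$ is odd in $[a-b,2b-1]$.

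This is where the main effort lies. The idea is to use the octahedral axiom that produces the triangle \cref{2SESs} in \cref{2SESsrem}: comparing the cone triangles of $\iota_{a,b}$, $n^0_{a,b}$, and $n^0_{a,b}\iota_{a,b}=\mu_{a,a}$ yields a commutative factorization
\[
    \chi\colon\ic{a}\longto{\epsilon^1_{a,a-1}}\ic{a-1}[1]\longto{\alpha}\zz{-}{a-1}{b}[1],
\]
where $\alpha$ is the connecting morphism of the Verdier-dual triangle $\ic{a-1}\to\zz{-}{a-1}{b}\to\zz{+}{a-2}{b}\to\ic{a-1}[1]$. Postcomposition with $\epsilon^1_{a,a-1}$ is an isomorphism on the relevant Hom spaces by \cref{maps_between_simples}, so it suffices to show that $\alpha_\ast$ is an isomorphism. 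For this I apply $\Hom_{\proj^n}(\ic{b},-)$ to the Verdier-dual triangle and observe, using the inductive hypothesis for $\zz{+}{a-2}{b}$ (same parity), together with the bound $a\leq 3b-1$ that defines the overlap range, that both adjacent terms $\Hom_{\proj^n}(\ic{b},\zz{+}{a-2}{b}[s])$ and $\Hom_{\proj^n}(\ic{b},\zz{+}{a-2}{b}[s+1])$ vanish. This sandwiches $\alpha_\ast$ between zeroes and forces it to be an isomorphism. The main obstacle is managing the range bookkeeping in this last step and confirming the octahedral factorization of $\chi$; with those in hand the induction closes.
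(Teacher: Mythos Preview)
Your argument is correct. For part~(1) it coincides with the paper's. For part~(2) you take a genuinely different route: the paper avoids any analysis of connecting morphisms by splitting into the cases $a>3b$ and $a<3b$; in the first case the supports of $\Hom_{\proj^n}(\ic{b},\zz{-}{a-1}{b}[\bullet])$ and $\Hom_{\proj^n}(\ic{b},\ic{a}[\bullet])$ are disjoint, while in the second case the paper switches to the triangle $\stan{a}\to\zz{+}{a}{b}\to\zz{+}{a-2}{b}\to\stan{a}[1]$ from \cref{delta flag of zigzags}, for which the supports of $\Hom_{\proj^n}(\ic{b},\stan{a}[\bullet])$ and $\Hom_{\proj^n}(\ic{b},\zz{+}{a-2}{b}[\bullet])$ are again disjoint by an easy parity check. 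Your approach instead keeps the single triangle \cref{2SESs} throughout and confronts the connecting morphism head-on, factoring it via the octahedral axiom as $\chi=\alpha\circ\epsilon^1_{a,a-1}$ and then using the inductive hypothesis for $\zz{+}{a-2}{b}$ to force $\alpha_*$ to be an isomorphism in the overlap range. This is more uniform but costs you the octahedral bookkeeping and the range verification; the paper's case split is shorter and sidesteps the connecting map entirely, at the price of invoking two different triangles. Both arguments ultimately rely on the same induction on $a-b$ with step size~$2$.
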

                \begin{proof}
                    By Verdier duality it suffices to compute $\Hom_{\proj^n}(\ic{b},\zz{+}{a}{b}[r])$.
                    \begin{enumerate}
                        \item Applying the functor $\Hom_{\proj^n}(\ic{b},-)$ to the triangle \cref{2SESs} yields the long exact sequence
                            \[
                                \ldots \to \Hom_{\proj^n}(\ic{b}, \zz{-}{a-1}{b}[r]) \to \Hom_{\proj^n}(\ic{b},\zz{+}{a}{b}[r]) \to \Hom_{\proj^n}(\ic{b},\ic{a}[r]) \to \dots.
                            \]
                            As $a-b$ is even, we have $\Hom_{\proj^n}(\ic{b},\zz{-}{a-1}{b}[r])\cong 0$ by \cref{HomZZplusIC}.(2).
                            Therefore $\Hom_{\proj^n}(\ic{b},\zz{+}{a}{b}[r]) \cong \Hom_{\proj^n}(\ic{b},\ic{a}[r])$ and the claim follows by \cref{maps_between_simples}.
                         \item We distinguish two cases to avoid having to determine connecting morphisms (note that $a=3b$ is impossible since $a-b$ is odd).
                         
                            If $a>3b$, we apply the functor $\Hom_{\proj^n}(\ic{b},-)$ to the triangle \cref{2SESs} to get the long exact sequence
                            \[
                                \ldots \to \Hom_{\proj^n}(\ic{b},\zz{-}{a-1}{b}[r]) \to \Hom_{\proj^n}(\ic{b},\zz{+}{a}{b}[r]) \to \Hom_{\proj^n}(\ic{b},\ic{a}[r]) \to \dots,
                            \]   
                            and the claim follows by using \cref{HomZZplusIC}.(1) for the left-hand side and \cref{maps_between_simples} for the right-hand side.  
    
                            If $a<3b$, we do induction on $a-b$.
                            The base case $a=b+1$ is given by \cref{mor_simple_stand}.
                            For the inductive step, applying the functor $\Hom_{\proj^n}(\ic{b},-)$ to  \cref{delta flag of zigzags} yields the long exact sequence
                                \[
                                    \ldots \to \Hom_{\proj^n}(\ic{b},\stan{a}[r]) \to \Hom_{\proj^n}(\ic{b},\zz{+}{a}{b}[r]) \to \Hom_{\proj^n}(\ic{b},\zz{+}{a-2}{b}[r]) \to \dots.
                                \]
                            The claim follows by using \cref{mor_simple_stand} for the left-hand side and the inductive hypothesis for the right-hand side.\qedhere
                    \end{enumerate}
                \end{proof}
                \begin{remark}
                    For $a-b$ odd, the case distinction in the proof of \cref{HomZZminusIC} avoids analysis of the connecting morphisms.
                    Note that to compute $\Hom_{\proj^n}(\ic{b},\zz{+}{a}{b})$ (i.e.~the case $r=0$) one can always use the argument for $a<3b$ to obtain
                    \begin{equation*}
                        \Hom_{\proj^n}(\ic{b},\zz{+}{a}{b})\cong\Hom_{\proj^n}(\ic{b},\zz{+}{a-2}{b})\cong\dots\cong\Hom_{\proj^n}(\ic{b},\stan{b+1}).
                    \end{equation*}
                    Hence a canonical non-zero morphism $\ic{b}\to\zz{+}{a}{b}$ is defined by the commutative diagram
                    \begin{equation*}
                        \begin{tikzcd}[column sep=large]
                            \ic{b}\arrow[dashed]{d}[swap]{\exists!}\arrow{rd}{\phi^0_{b,b+1}}&\\
                            \zz{+}{a}{b}\arrow{r}{\pi_{b+2,b}\dots\pi_{a,b}}
                            &\stan{b+1}
                        \end{tikzcd}
                    \end{equation*}
                    By induction on $a-b$, the octahedral axiom for the composition $\ic{b}\to\zz{+}{a}{b}\longto{\pi_{a,b}}\zz{+}{a-2}{b}$ yields triangles
                    \begin{equation}\label{2SESs1}
                        \ic{b}\to\zz{+}{a}{b}\to\zz{+}{a}{b+1}\to\ic{b}[1].
                    \end{equation}
                    Dually, there are also triangles $\zz{-}{a}{b+1}\to\zz{-}{a}{b}\to\ic{b}\to\zz{-}{a}{b+1}[1]$.
                    These triangles and the ones from \cref{2SESsrem} are used in \cite{CiprianiLanini} to inductively construct the string objects, starting from $\zz{\pm}{a}{a}=\ic{a}$.
                \end{remark}
            \subsubsection{Morphisms between standard objects and string objects}\label{stdzigzaghoms}
                \begin{lemma}
                    \label{homdeltazigzag}
                    For $0\leq i<\frac{a-b}{2}$ we have
                    \begin{equation*}
                        \Hom_{\proj^n}(\stan{a-2i},\zz{+}{a}{b}[r])\cong\Hom_{\proj^n}(\zz{-}{a}{b},\costan{a-2i}[r])\cong\begin{cases}
                            \kk&\text{if }r=2i,\\
                            0&\text{else}.
                        \end{cases}
                    \end{equation*}
                \end{lemma}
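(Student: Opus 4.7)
The plan is to compute $\Hom_{\proj^n}(\stan{a-2i},\zz{+}{a}{b}[r])$; the second isomorphism then follows by Verdier duality, using $\verdier(\stan{a-2i})=\costan{a-2i}$ and $\verdier(\zz{+}{a}{b})=\zz{-}{a}{b}$. I would proceed by induction on $a-b$, strengthening the inductive statement to also identify a specific generator of the one-dimensional space in degree $2i$, namely $\iota_{a,b}\circ\delta^{2i}_{a-2i,a}$ (the image of the standard generator of $\Hom_{\proj^n}(\stan{a-2i},\stan{a}[2i])$ under postcomposition with $\iota_{a,b}$). The base case $a-b\in\{0,1\}$ either has empty range of $i$ or forces $i=0$ with $\zz{+}{a}{b}\in\{\ic{a},\stan{a}\}$, and the claim is immediate from \cref{DeltaHoms}.

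For the inductive step with $a-b\geq 2$, I would apply $\Hom_{\proj^n}(\stan{a-2i},-)$ to the defining triangle
\begin{equation*}
\stan{a}\xrightarrow{\iota_{a,b}}\zz{+}{a}{b}\xrightarrow{\pi_{a,b}}\zz{+}{a-2}{b}\xrightarrow{\psi_{a-2,b}}\stan{a}[1].
\end{equation*}
For $i=0$, since $\zz{+}{a-2}{b}$ is supported on $\proj^{a-2}$ and the stratum $\aff^a=\proj^a\setminus\proj^{a-1}$ is disjoint from $\proj^{a-2}$, the adjunction gives $\Hom_{\proj^n}(\stan{a},\zz{+}{a-2}{b}[r])\cong\Hom_{\aff^a}(\ul{\kk}_{\aff^a}[a],\jmath_a^*\zz{+}{a-2}{b}[r])=0$. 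The long exact sequence then collapses to $\Hom_{\proj^n}(\stan{a},\zz{+}{a}{b}[r])\cong\Hom_{\proj^n}(\stan{a},\stan{a}[r])$, proving the claim with generator $\iota_{a,b}$.

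For $1\leq i<\frac{a-b}{2}$, the strengthened inductive hypothesis applied to $\zz{+}{a-2}{b}$ with index $i-1$ gives that $\Hom_{\proj^n}(\stan{a-2i},\zz{+}{a-2}{b}[r])$ equals $\kk$ in degree $r=2i-2$ with generator $\iota_{a-2,b}\circ\delta^{2i-2}_{a-2i,a-2}$ and vanishes otherwise. Combined with \cref{DeltaHoms}, the long exact sequence immediately forces vanishing outside $r\in\{2i-2,2i-1,2i\}$ and yields $\kk$ in degree $2i$ with generator $\iota_{a,b}\circ\delta^{2i}_{a-2i,a}$, confirming the strengthened statement. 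The remaining degrees $r=2i-2$ and $r=2i-1$ reduce to showing that the connecting map $\partial\colon\Hom_{\proj^n}(\stan{a-2i},\zz{+}{a-2}{b}[2i-2])\to\Hom_{\proj^n}(\stan{a-2i},\stan{a}[2i-1])$, which is postcomposition with $\psi_{a-2,b}$, is nonzero (hence an isomorphism of one-dimensional spaces).

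The main obstacle is verifying $\partial\neq 0$. On the generator, $\partial$ sends $\iota_{a-2,b}\circ\delta^{2i-2}_{a-2i,a-2}$ to $\psi_{a-2,b}\circ\iota_{a-2,b}\circ\delta^{2i-2}_{a-2i,a-2}=\delta^1_{a-2,a}\circ\delta^{2i-2}_{a-2i,a-2}$, where $\psi_{a-2,b}\circ\iota_{a-2,b}=\delta^1_{a-2,a}$ by the defining diagram of $\psi_{a-2,b}$ (and directly by $\psi_{a-2,a-3}=\delta^1_{a-2,a}$ in the edge case $a-b=3$, where $\iota_{a-2,a-3}=\id_{\stan{a-2}}$). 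Using the defining identities from \cref{stdsimplemorph}, namely $\mu_{k,l}=\epsilon^{l-k}_{k,l}\circ\mu_{k,k}$ and $\mu_{l,l}\circ\delta^{l-k}_{k,l}=\mu_{k,l}$, together with the composition law $\epsilon^1_{a-2,a-1}\circ\epsilon^{2i-2}_{a-2i,a-2}=\epsilon^{2i-1}_{a-2i,a-1}$ from \cref{simplemorph}, I would compute $\mu_{a-2,a-1}\circ\delta^{2i-2}_{a-2i,a-2}=\mu_{a-2i,a-1}$, and hence $\delta^1_{a-2,a}\circ\delta^{2i-2}_{a-2i,a-2}=\phi^0_{a-1,a}\circ\mu_{a-2i,a-1}=\delta^{2i-1}_{a-2i,a}$, a nonzero generator of the target. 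Thus $\partial$ is an isomorphism, and the long exact sequence yields the desired vanishing in degrees $2i-2$ and $2i-1$, closing the induction.
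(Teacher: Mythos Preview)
Your argument is correct, but it takes a genuinely different route from the paper's proof. You run an induction on $a-b$ along the defining triangle \cref{delta flag of zigzags}, which forces you to identify the connecting map explicitly and verify $\delta^1_{a-2,a}\circ\delta^{2i-2}_{a-2i,a-2}=\delta^{2i-1}_{a-2i,a}$ using the relations fixed in \cref{stdsimplemorph}. That computation is sound, and as a bonus you obtain an explicit generator $\iota_{a,b}\circ\delta^{2i}_{a-2i,a}$ of the nonvanishing Hom space.

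The paper instead avoids any connecting-map analysis by applying $\Hom_{\proj^n}(\stan{a-2i},-)$ to the triangles \cref{2SESs} (and \cref{2SESs1} when $a-b$ is odd), which express $\zz{+}{a}{b}$ as an extension of $\ic{a}$ by $\zz{-}{a-1}{b}$ (respectively of $\zz{+}{a}{b+1}$ by $\ic{b}$). Since $\zz{-}{a-1}{b}$ (or $\zz{-}{a-1}{b+1}$) has a $\costan{}$-flag not involving $\costan{a-2i}$, the highest weight orthogonality $\Hom_{\proj^n}^*(\stan{k},\costan{l})=0$ for $k\neq l$ kills that term outright, reducing the computation in one step to $\Hom_{\proj^n}(\stan{a-2i},\ic{a}[r])$, which is given by \cref{topIC_to_lower_nabla}. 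This is shorter and leans on the highest weight structure rather than the explicit morphism calculus; your approach, by contrast, is self-contained relative to the defining filtration and yields the explicit basis element, which is closer in spirit to what is later needed in \cref{zigzaghomdiags}.
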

                \begin{proof}
                    By Verdier duality it suffices to compute $\Hom_{\proj^n}(\stan{a-2i},\zz{+}{a}{b}[r])$.
                    
                    If $a-b$ is odd, we apply $\Hom_{\proj^n}(\stan{a-2i},-)$ to the triangle \cref{2SESs1} and the first triangle in \cref{2SESs} defining $\zz{+}{a}{b+1}$.
                    Since $a-2i>b$, we have $\Hom_{\proj^n}(\stan{a-2i},\ic{b}[r])=0$ for all $r$ by \cref{topIC_to_lower_nabla}.
                    Moreover, since the $\costan{}$-flag of $\zz{-}{a-1}{b+1}$ does not involve $\nabla_{a-2i}$ it follows that $\Hom_{\proj^n}(\stan{a-2i},\zz{-}{a-1}{b+1}[r])=0$ for all $r$.
                    Together these observations imply
                    \begin{equation*}
                        \Hom_{\proj^n}(\stan{a-2i},\zz{+}{a}{b}[r])\cong\Hom_{\proj^n}(\stan{a-2i},\zz{+}{a}{b+1}[r])\cong\Hom_{\proj^n}(\stan{a-2i},\ic{a}[r]),
                    \end{equation*}
                    which has the claimed form by \cref{topIC_to_lower_nabla}.

                    The argument for $a-b$ even is similar, using the first triangle in \cref{2SESs} and the fact that $\zz{-}{a-1}{b}$ has a $\costan{}$-flag not involving $\costan{a-2i}$.
                \end{proof}
                The following lemma determines the morphisms from string modules to some standard modules.
                \begin{lemma}\label{homzigzagdelta}
                    Let $0\leq b \leq a\leq n$ and $r\in\Z$.
                    \begin{enumerate}
                        \item If $a-b$ is even, then
                            \begin{equation*}
                                \Hom_{\proj^n}(\zz{+}{a}{b},\stan{a}[r])\cong\Hom_{\proj^n}(\costan{a},\zz{-}{a}{b}[r])\cong\begin{cases}
                                    \kk&\text{if $r=a+b$},\\
                                    0&\text{otherwise}.
                                \end{cases}
                            \end{equation*}
                        \item If $a-b$ is odd, then
                            \begin{equation*}
                                \Hom_{\proj^n}(\zz{+}{a}{b},\stan{a}[r])\cong\Hom_{\proj^n}(\costan{a},\zz{-}{a}{b}[r])\cong\begin{cases}
                                    \kk&\text{if $r=a-b-1$},\\
                                    0&\text{otherwise}.
                                \end{cases}
                            \end{equation*}
                    \end{enumerate}
                \end{lemma}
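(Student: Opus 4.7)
By Verdier duality it suffices to compute $\Hom_{\proj^n}(\zz{+}{a}{b},\stan{a}[r])$, and I proceed by induction on $a-b$. The base cases are $a=b$ (where $\zz{+}{a}{a}=\ic{a}$ and \cref{mor_simple_stand} with $l=k=a$ gives the claim) and $a=b+1$ (where $\zz{+}{a}{a-1}=\stan{a}$ and \cref{DeltaHoms} with $l=k=a$ gives the claim).

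For the inductive step $a-b\geq 2$, I apply $\Hom_{\proj^n}(-,\stan{a}[r])$ to the defining triangle \cref{delta flag of zigzags}. The resulting long exact sequence relates $\Hom_{\proj^n}(\zz{+}{a}{b},\stan{a}[r])$ to $\Hom_{\proj^n}(\stan{a},\stan{a}[r])$ (which by \cref{DeltaHoms} is $\kk$ at $r=0$ and zero otherwise) and to the auxiliary group $\Hom_{\proj^n}(\zz{+}{a-2}{b},\stan{a}[r])$, with connecting morphism given by precomposition with $\psi_{a-2,b}$. The key technical step is computing the auxiliary group: since $\zz{+}{a-2}{b}$ is supported on $\proj^{a-2}$, applying $\Hom_{\proj^n}(\zz{+}{a-2}{b},-)$ to the recollement triangle \cref{delta_comp_series} and using the restriction isomorphism $\imath_{a-2}^!\ic{a-1}\cong\ic{a-2}[-1]$ together with \cref{HomZZtopIC}.(3) expresses both flanking terms as shifts of $\Hom_{\proj^n}(\zz{+}{a-2}{b},\ic{a-2}[s])$, which is known explicitly by \cref{HomZZtopIC}. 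By \cref{ICdegree2end} and \cref{HomZZtopICmodule}, the connecting morphism of this auxiliary LES acts as multiplication by the generator $\epsilon^2_{a-2,a-2}$ of $\End^*_{\proj^n}(\ic{a-2})$, and is therefore an isomorphism whenever both source and target are non-zero. A short parity bookkeeping then shows that $\Hom_{\proj^n}(\zz{+}{a-2}{b},\stan{a}[r])$ is one-dimensional at exactly two degrees, namely $r=1$ and the target degree $r=a+b$ (when $a-b$ is even) or $r=a-b-1$ (when $a-b$ is odd), and vanishes otherwise.

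To conclude, by \cref{stringwelldef} the morphism $\psi_{a-2,b}$ is a generator of the $1$-dimensional space $\Hom_{\proj^n}(\zz{+}{a-2}{b},\stan{a}[1])$, so the connecting morphism of the main LES at $r=0$ sends $\id_{\stan{a}}$ to $\psi_{a-2,b}$ and is an isomorphism of one-dimensional spaces. This cancels the $r=0$ contribution of $\Hom_{\proj^n}(\stan{a},\stan{a})$ with the $r=1$ contribution of the auxiliary group, and the only surviving non-zero degree of $\Hom_{\proj^n}(\zz{+}{a}{b},\stan{a}[r])$ is the target degree, as asserted. The main obstacle is the careful bookkeeping of the parity ranges and the identification of the connecting morphisms in the auxiliary computation as genuine (not merely non-zero) isomorphisms; this is of the same flavour as the arguments used in \cref{HomZZplusIC,HomZZtopIC,HomZZminusIC}.
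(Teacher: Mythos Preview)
Your proposal is correct and follows essentially the same approach as the paper's own proof: reduce by Verdier duality, handle the base cases via \cref{mor_simple_stand} and \cref{DeltaHoms}, apply $\Hom_{\proj^n}(-,\stan{a})$ to the triangle \cref{delta flag of zigzags}, compute the auxiliary group $\Hom_{\proj^n}(\zz{+}{a-2}{b},\stan{a}[r])$ via the recollement triangle \cref{delta_comp_series} together with the identification $\imath_{a-2}^!(\epsilon^1_{a,a-1})=\epsilon^2_{a-2,a-2}$ and \cref{HomZZtopICmodule}, and use \cref{stringwelldef} to kill the degree $r=0,1$ contributions. The only difference is organizational: the paper splits the analysis into $r\geq 2$ (where it computes the auxiliary group) and $r\leq 1$ (handled directly via \cref{stringwelldef} and the vanishing of $\Hom_{\proj^n}(\zz{+}{a-2}{b},\stan{a})$), whereas you compute the auxiliary group at all degrees at once and then cancel; also note that despite your phrasing ``by induction on $a-b$'', neither your argument nor the paper's actually invokes an inductive hypothesis for this lemma.
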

                \begin{proof}
                    By Verdier duality it suffices to compute $\Hom_{\proj^n}(\zz{+}{a}{b},\stan{a}[r])$.
                    For $a=b$ and $a=b+1$, the claim follows from \cref{mor_simple_stand} and \cref{DeltaHoms}, respectively.
                    
                    For $a>b+1$, by applying the functor $\Hom_{\proj^n}(-,\stan{a})$ to the triangle $\stan{a}\to\zz{+}{a}{b}\to\zz{+}{a-2}{b}\to\stan{a}[1]$ we get the long exact sequence
                    \[
                        \ldots \to \Hom_{\proj^n}(\zz{+}{a-2}{b},\stan{a}[r]) \to \Hom_{\proj^n}(\zz{+}{a}{b},\stan{a}[r]) \to \Hom_{\proj^n}(\stan{a},\stan{a}[r]) \to \ldots.
                    \]
                    By \cref{DeltaHoms} the right-hand side is one-dimensional and concentrated in degree $0$, and hence $\Hom_{\proj^n}(\zz{+}{a}{b},\stan{a}[r])\cong\Hom_{\proj^n}(\zz{+}{a-2}{b},\stan{a}[r])$ for $r\geq 2$.
                    For $r\leq 1$ we get $\Hom_{\proj^n}(\zz{+}{a}{b},\stan{a}[r])=0$: as a consequence of \cref{stringwelldef}, the connecting morphism $\Hom_{\proj^n}(\stan{a},\stan{a})\to\Hom_{\proj^n}(\zz{+}{a-2}{b},\stan{a}[1])$ is an isomorphism, and $\Hom_{\proj^n}(\zz{+}{a-2}{b},\stan{a})=0$ by the inductive construction of $\zz{+}{a-2}{b}$ and \cref{DeltaHoms} (and \cref{mor_simple_stand} if $a-b$ is even).
                    
                    To compute $\Hom_{\proj^n}(\zz{+}{a-2}{b},\stan{a}[r])$ for $r\geq 2$, we analyze the long exact sequence obtained by applying $\Hom_{\proj^n}(\zz{+}{a-2}{b},-)$ to the triangle \cref{delta_comp_series} definying $\stan{a}$.
                    By restriction to $\proj^{a-2}$ and naturality of the adjunction, we have a commutative square
                    \begin{equation*}
                        \begin{tikzcd}
                            \Hom_{\proj^n}(\zz{+}{a-2}{b},\ic{a}[r])\arrow{r}\arrow{d}[swap]{\cong}
                            &\Hom_{\proj^n}(\zz{+}{a-2}{b},\ic{a-1}[r+1])\arrow{d}{\cong}\\
                            \Hom_{\proj^n}(\zz{+}{a-2}{b},\ic{a-2}[r-2])\arrow{r}
                            &\Hom_{\proj^n}(\zz{+}{a-2}{b},\ic{a-2}[r])
                        \end{tikzcd}
                    \end{equation*}
                    where the bottom morphism is given by postcomposition with $\imath_{a-2}^!(\epsilon^1_{a,a-1})$, which by \cref{ICdegree2end} is $\epsilon^2_{a-2,a-2}\colon \ic{a-2}[-2]\to\ic{a-2}$.
                    By \cref{HomZZtopICmodule,HomZZtopIC}, this is an isomorphism unless $r=a+b$ if $a-b$ is even, respectively unless $r=a-b-1$ if $a-b$ is odd, and the claim follows from this.
                \end{proof}
            \subsubsection{Morphisms between string objects}\label{seczigzaghoms}
                Now we can determine the morphisms between string objects.
                Although the proof is mostly the same, we need to distinguish two cases depending on the parity of $a-b$. 
                \begin{proposition}\label{zigzaghoms}
                    Let $0\leq b\leq a\leq n$, $0\leq i\leq \frac{a-b}{2}$ and $r\in\Z$.
                    \begin{enumerate}
                        \item If $a-b$ is even, then
                            \begin{equation*}
                                \Hom_{\proj^n}(\zz{+}{a-2i}{b},\zz{+}{a}{b}[r])\cong\Hom_{\proj^n}(\zz{-}{a}{b},\zz{-}{a-2i}{b}[r])\cong\begin{cases}
                                    \kk&\text{if $2i\leq r\leq a+b$ and $r$ even},\\
                                    0&\text{otherwise}.
                                \end{cases}
                            \end{equation*}
                        \item If $a-b$ is odd, then
                            \begin{equation*}
                                \Hom_{\proj^n}(\zz{+}{a-2i}{b},\zz{+}{a}{b}[r])\cong\Hom_{\proj^n}(\zz{-}{a}{b},\zz{-}{a-2i}{b}[r])\cong\begin{cases}
                                    \kk&\text{if $2i\leq r\leq a-b-1$ and $r$ even},\\
                                    0&\text{otherwise}.
                                \end{cases}
                            \end{equation*}
                    \end{enumerate}
                \end{proposition}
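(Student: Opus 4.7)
The plan is to first apply Verdier duality to reduce the statement to computing $\Hom_{\proj^n}(\zz{+}{a-2i}{b},\zz{+}{a}{b}[r])$, and then to proceed by downward induction on $i$, starting from its maximum value $\lfloor(a-b)/2\rfloor$ and descending to $i=0$.

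The base case corresponds to $\zz{+}{a-2i}{b}$ being either $\ic{b}$ (for $a-b$ even and $i=(a-b)/2$) or $\stan{b+1}$ (for $a-b$ odd and $i=(a-b-1)/2$). In the first case, $\Hom_{\proj^n}(\ic{b},\zz{+}{a}{b}[r])$ has already been computed in \cref{HomZZminusIC}(1), and with $2i=a-b$ this matches the asserted formula. In the second case, $\Hom_{\proj^n}(\stan{b+1},\zz{+}{a}{b}[r])$ was computed in \cref{homdeltazigzag} and is concentrated in degree $2i=a-b-1$, again matching.

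For the inductive step at index $i<\lfloor(a-b)/2\rfloor$, apply $\Hom_{\proj^n}(-,\zz{+}{a}{b}[r])$ to the defining triangle \cref{delta flag of zigzags} of $\zz{+}{a-2i}{b}$, yielding a long exact sequence whose four relevant terms are $\Hom_{\proj^n}(\stan{a-2i},\zz{+}{a}{b}[r-1])$, $\Hom_{\proj^n}(\zz{+}{a-2i-2}{b},\zz{+}{a}{b}[r])$, $\Hom_{\proj^n}(\stan{a-2i},\zz{+}{a}{b}[r])$, and $\Hom_{\proj^n}(\zz{+}{a-2i-2}{b},\zz{+}{a}{b}[r+1])$. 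By \cref{homdeltazigzag} the $\stan{}$-terms are $1$-dimensional only at $r=2i$ (respectively $r-1=2i$) and zero otherwise, while by the inductive hypothesis the $\zz{+}{a-2i-2}{b}$-terms are $1$-dimensional exactly at even degrees in $[2(i+1),a+b]$ (or $[2(i+1),a-b-1]$ in the odd case) and zero otherwise. A routine case distinction on the parity of $r$ then forces $\Hom_{\proj^n}(\zz{+}{a-2i}{b},\zz{+}{a}{b}[r])$ to have the claimed dimensions: the new $1$-dimensional contribution at $r=2i$ comes from $\Hom_{\proj^n}(\stan{a-2i},\zz{+}{a}{b}[2i])$, while the $1$-dimensional contributions at even $r\in[2(i+1),\cdot]$ come from the inductive hypothesis.

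The main technical obstacle is verifying that in each sub-case of $r$ no non-trivial connecting morphism need be computed. This works because the two $\stan{}$-contributions and the two $\zz{+}{a-2i-2}{b}$-contributions in the long exact sequence live in degrees of opposite parity (shifted by one), so that in every long exact sequence fragment the potential connecting map has either vanishing source or vanishing target. Consequently the sequence splits into short exact pieces, each of which directly gives the claimed dimension, and the induction goes through cleanly for both parities of $a-b$.
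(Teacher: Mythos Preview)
Your proposal is correct and follows essentially the same approach as the paper: Verdier duality, downward induction on $i$ with the base cases handled by \cref{HomZZminusIC} (even parity) and \cref{homdeltazigzag} (odd parity), and the inductive step via the long exact sequence from the defining triangle \cref{delta flag of zigzags} together with \cref{homdeltazigzag} for the $\stan{a-2i}$-terms. Your parity observation that the $\stan{}$-contribution (at $r=2i$) and the $\zz{+}{a-2i-2}{b}$-contributions (at even $r\geq 2i+2$) never collide with each other's shifts is exactly what makes the connecting maps harmless; the paper's proof for the even case instead recomputes $\Hom_{\proj^n}(\stan{a-2i},\zz{+}{a}{b}[r])$ in-line via the triangle \cref{2SESs}, but this is redundant with \cref{homdeltazigzag} and your direct citation is cleaner.
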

                \begin{proof}\leavevmode
                    \begin{enumerate}
                        \item By Verdier duality it suffices to compute $\Hom_{\proj^n}(\zz{+}{a-2i}{b},\zz{+}{a}{b}[r])$.
                            We do this by downward induction on $i$.
                            For the base case $i=\frac{a-b}{2}$ we have $\zz{+}{a-2i}{b}=\ic{b}$, so the claim follows from \cref{HomZZminusIC}.
                            
                            For the inductive step, we apply $\Hom_{\proj^n}(-,\zz{+}{a}{b})$ to the triangle \cref{delta flag of zigzags} defining $\zz{+}{a-2i}{b}$, and $\Hom_{\proj^n}(\stan{a-2i},-)$ to the triangle \cref{2SESs} to obtain the diagram
                            \begin{equation}\label{diagram1}
                                \adjustbox{scale=0.7,center}{\begin{tikzcd}
                                &&&\ldots\ar[d]&& \\
                                &&&\Hom_{\proj^n}(\stan{a-2i},\zz{-}{a-1}{b}[r])\ar[d]&&\\
                                \ldots \ar[r]&
                                \Hom_{\proj^n}(\zz{+}{a-2i-2}{b},\zz{+}{a}{b}[r])\ar[r] & \Hom_{\proj^n}(\zz{+}{a-2i}{b},\zz{+}{a}{b}[r])\ar[r] & \Hom_{\proj^n}(\stan{a-2i},\zz{+}{a}{b}[r])\ar[r]\ar[d] &\ldots\\
                                &&&\Hom_{\proj^n}(\stan{a-2i},\ic{a}[r])\ar[d]&&\\
                                &&&\ldots&&
                                \end{tikzcd}}
                            \end{equation}
                            By the inductive hypothesis $\Hom_{\proj^n}(\zz{+}{a-2i-2}{b},\zz{+}{a}{b}[r])=\kk$ for $r\in\{2i+2,2i+4,\dots,a+b\}$, and it vanishes otherwise.
                            Since $\zz{-}{a-1}{b}$ has a $\costan{}$-flag not involving $\costan{a-2i}$ by the dual version of \cref{delta flag of zigzags}, the first term in the column of \cref{diagram1} vanishes (this follows either by direct calculation, or from e.g.~\cite[Thm.~3.11]{BrundanStroppelHW}).
                            Moreover, since by \cref{topIC_to_lower_nabla} the last term in the column of \cref{diagram1} is one dimensional and concentrated in degree $2i$, so is $\Hom_{\proj^n}(\stan{a-2i},\zz{+}{a}{b}[r])$, from which the claim follows.
                        \item By Verdier duality it suffices to compute $\Hom_{\proj^n}(\zz{+}{a-2i}{b},\zz{+}{a}{b}[r])$.
                            We do this by downward induction on $i$.
                            In the base case $i=\frac{a-b-1}{2}$ we have $\zz{+}{a-2i}{b}=\stan{b+1}$, so this follows from \cref{homdeltazigzag}.
    
                            For the inductive step, we apply the functor $\Hom_{\proj^n}(-,\zz{+}{a}{b})$ to the triangle \cref{delta flag of zigzags}. 
                            By induction, $\Hom_{\proj^n}(\zz{+}{a-2i-2}{b},\zz{+}{a}{b}[r])\cong\kk$ for $2i+2\leq r\leq a-b-1$ and $r$ even.
                            By \cref{homdeltazigzag} we know that $\Hom_{\proj^n}(\stan{a-2i},\zz{+}{a}{b}[r])\cong\kk$ for $r=2i$, and that it vanishes otherwise, which implies the claim.\qedhere
                    \end{enumerate}
                \end{proof}
                \begin{remark}
                    \label{zigzaghomdiags}
                    From the proof of \cref{zigzaghoms} we obtain the following descriptions of canonical non-zero morphisms $\Phi^{2i}_{a,b}\colon \zz{+}{a}{b}\to\zz{+}{a}{b}[2i]$, $\ol{\Phi}^{2i}_{a-2i,b}\colon \zz{+}{a-2i}{b}\to\zz{+}{a}{b}[2i]$ and $\zeta^{2i}_{a-2i,b}\colon\stan{a-2i}\to\zz{+}{a}{b}[2i]$:
                    for $0\leq i\leq\frac{a-b}{2}$, they arise from the diagram
                    \begin{equation*}
                        \begin{tikzcd}
                            \zz{+}{a}{b}\arrow[dashed]{rr}{\exists!\Phi^{2i}_{a,b}}\arrow{d}[swap]{\pi_{a-2i+2,b}\dots\pi_{a,b}}
                            &&\zz{+}{a}{b}[2i]\arrow{d}{n^0_{a,b}}\\
                            \zz{+}{a-2i}{b}\arrow[dashed]{rru}{\exists!\ol{\Phi}^{2i}_{a-2i,b}}
                            &&\ic{a}[2i]\\
                            &\stan{a-2i}\arrow{lu}{\iota_{a-2i,b}}\arrow{ru}[swap]{\mu_{a-2i,a}}\arrow[dashed]{ruu}[near start]{\exists!\zeta^{2i}_{a-2i,b}}&
                        \end{tikzcd}
                    \end{equation*}
                    Note that this diagram does not depend on whether $a-b$ is even or odd, though in the proof of \cref{zigzaghoms} one has to distinguish this since the induction results in different base cases.

                    Moreover, by \cref{HomZZtopICdiag} $\Hom_{\proj^n}(\zz{+}{a-2i}{b},\ic{a}[2i])$ is spanned by the composition $\zz{+}{a-2i}{b}\longto{n^0_{a-2i,b}}\ic{a-2i}\longto{\epsilon^{2i}_{a-2i,a}}\ic{a}[2i]$.
                    Since we have $n^0_{a-2i,b}\iota_{a-2i,b}=\mu_{a-2i,a-2i}$ by construction, it follows that $\epsilon^{2i}_{a-2i,a}n^0_{a-2i,b}\iota_{a-2i,b}=\mu_{a-2i,a}$.
                    An easy diagram chase (using that $\Hom_{\proj^n}(\zz{+}{a-2i}{b},\ic{a}[2i])\cong\kk$) then shows that $\epsilon^{2i}_{a-2i,a}n^0_{a-2i,b}\colon \zz{+}{a-2i}{b}\to\ic{a}[2i]$ makes the entire diagram above commute, and therefore the morphism $\Phi^{2i}_{a,b}\colon \zz{+}{a}{b}\to\zz{+}{a}{b}[2i]$ can also be defined by the diagram
                    \begin{equation*}
                        \begin{tikzcd}[column sep=width("aaaaaaaaaaaa")]
                            \zz{+}{a}{b}\arrow[dashed]{r}{\exists!\Phi^{2i}_{a,b}}\arrow{d}[swap]{\pi_{a-2i+2,b}\dots\pi_{a,b}}
                            &\zz{+}{a}{b}[2i]\arrow{d}{n^0_{a,b}}\\
                            \zz{+}{a-2i}{b}\arrow{r}{\epsilon^{2i}_{a-2i,a}n^0_{a-2i,b}}
                            &\ic{a}[2i]\rlap{.}
                        \end{tikzcd}
                    \end{equation*}
                    For $i>\frac{a-b}{2}$ (this can only happen if $a-b$ is even), the morphism $\Phi^{2i}_{a,b}$ arises from the diagram
                    \begin{equation*}
                        \begin{tikzcd}[column sep=width("aaaaaaa")]
                            \zz{+}{a}{b}\arrow[dashed]{r}{\exists!\Phi^{2i}_{a,b}}\arrow{d}[swap]{\pi_{b+2,b}\dots\pi_{a,b}}
                            &\zz{+}{a}{b}[2i]\arrow{d}{n^0_{a,b}}\\
                            \ic{b}\arrow{r}{\epsilon^{2i}_{b,a}}\arrow[dashed]{ru}{\exists!}
                            &\ic{a}[2i]\rlap{.}
                        \end{tikzcd}
                    \end{equation*}
                    The morphisms $\zz{-}{a}{b}\to\zz{-}{a}{b}[2i]$ are described by the dual diagrams.
                \end{remark}
            \subsubsection{Determining the composition}\label{zigzagcomposition}
                To show that the string objects are $\proj$-like, we are left to determine the composition of morphisms in $\End_{\proj^n}^*(\zz{\pm}{a}{b})$.
                This requires the following two technical lemmas, which show that the morphisms from \cref{zigzaghomdiags} are compatible with the quotient maps between the string objects and the morphisms betwen IC sheaves.
                \begin{lemma}
                    \label{criticallemma}
                    For $0\leq b\leq a\leq n$ and $1\leq i\leq\frac{a-b-2}{2}$ if $a-b$ is even, respectively $1\leq i\leq\frac{a-b-3}{2}$ if $a-b$ is odd, the square
                    \begin{equation*}
                        \begin{tikzcd}[column sep=large]
                            \zz{+}{a-2}{b}\arrow{r}{\ol{\Phi}^2_{a-2,b}}\arrow{d}[swap]{\pi_{a-2i,b}\dots\pi_{a-2,b}}
                            &\zz{+}{a}{b}[2]\arrow{d}{\pi_{a-2i+2,b}\dots\pi_{a,b}}\\
                            \zz{+}{a-2i-2}{b}\arrow{r}{\ol{\Phi}^2_{a-2i-2,b}}
                            &\zz{+}{a-2i}{b}[2]
                        \end{tikzcd}
                    \end{equation*}
                    commutes up to a non-zero scalar.
                \end{lemma}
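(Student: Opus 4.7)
The plan is to proceed by induction on $i$, with all the essential work concentrated in the base case $i=1$. For the base case, the identity to prove reads $\pi_{a,b}\circ\ol{\Phi}^2_{a-2,b}=\lambda\cdot\ol{\Phi}^2_{a-4,b}\circ\pi_{a-2,b}$; the right-hand side equals $\Phi^2_{a-2,b}$ by \cref{zigzaghomdiags}, and by \cref{zigzaghoms} both sides lie in $\Hom_{\proj^n}(\zz{+}{a-2}{b},\zz{+}{a-2}{b}[2])\cong\kk$, so the problem reduces to showing $\pi_{a,b}\circ\ol{\Phi}^2_{a-2,b}\neq 0$. Applying $\Hom_{\proj^n}(\zz{+}{a-2}{b},-)$ to \cref{delta flag of zigzags} produces an exact segment
\begin{equation*}
    \Hom(\zz{+}{a-2}{b},\stan{a}[2])\to\Hom(\zz{+}{a-2}{b},\zz{+}{a}{b}[2])\to\Hom(\zz{+}{a-2}{b},\zz{+}{a-2}{b}[2])\to\Hom(\zz{+}{a-2}{b},\stan{a}[3])
\end{equation*}
in which the middle map is postcomposition with $\pi_{a,b}$, and it suffices to show that both outer terms vanish.

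To compute these outer terms I apply $\Hom_{\proj^n}(\zz{+}{a-2}{b},-)$ to the composition series \cref{delta_comp_series} of $\stan{a}$. Since $\zz{+}{a-2}{b}$ is supported on $\proj^{a-2}$, adjunction reduces $\Hom(\zz{+}{a-2}{b},\ic{a-1}[r])$ and $\Hom(\zz{+}{a-2}{b},\ic{a}[r])$ to Hom spaces into $\ic{a-2}$ computed by \cref{HomZZtopIC}, and the only surviving pair is $\Hom(\zz{+}{a-2}{b},\ic{a}[2])\cong\kk$ and $\Hom(\zz{+}{a-2}{b},\ic{a-1}[3])\cong\kk$. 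Iterating \cref{ICdegree2end} identifies the connecting morphism between them with postcomposition by $\epsilon^2_{a-2,a-2}$, which by \cref{HomZZtopICmodule} sends the generator $n^0_{a-2,b}$ to a non-zero scalar multiple of $n^2_{a-2,b}$; since $n^2_{a-2,b}\neq 0$ in the range of the lemma by \cref{HomZZtopIC}, this connecting morphism is an isomorphism $\kk\xrightarrow{\sim}\kk$, forcing $\Hom(\zz{+}{a-2}{b},\stan{a}[r])=0$ for $r\in\{2,3\}$.

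For the inductive step $i>1$, I split $\pi_{a-2i+2,b}\circ(\pi_{a-2i+4,b}\cdots\pi_{a,b})\circ\ol{\Phi}^2_{a-2,b}$, apply the inductive hypothesis to rewrite the inner composition as a non-zero scalar multiple of $\ol{\Phi}^2_{a-2i,b}\circ\pi_{a-2i+2,b}\cdots\pi_{a-2,b}$, and then apply the base case with $a$ replaced by $a-2i+2$ (which still satisfies the range hypothesis of the lemma) to rewrite $\pi_{a-2i+2,b}\circ\ol{\Phi}^2_{a-2i,b}$ as a non-zero scalar multiple of $\Phi^2_{a-2i,b}=\ol{\Phi}^2_{a-2i-2,b}\circ\pi_{a-2i,b}$, using the alternative description in \cref{zigzaghomdiags}. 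The main obstacle will be the base case, specifically the identification of the connecting morphism with postcomposition by $\epsilon^2_{a-2,a-2}$ --- this is exactly where the coherent choices made in \cref{simplemorph} and the relations in \cref{ICdegree2end} come together.
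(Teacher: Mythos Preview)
Your proof is correct and follows essentially the same strategy as the paper's: induction on $i$, with the base case $i=1$ handled by the long exact sequence coming from \cref{delta flag of zigzags} and the observation that $\Hom_{\proj^n}(\zz{+}{a-2}{b},\stan{a}[r])=0$ for $r\in\{2,3\}$, and the inductive step by splitting the rectangle into the inductive square and an instance of the base case at a smaller parameter. The only differences are cosmetic: the paper shows the map $\Hom(\zz{+}{a-2}{b},\zz{+}{a}{b}[2])\to\Hom(\zz{+}{a-2}{b},\zz{+}{a-2}{b}[2])$ is surjective (vanishing at $r=3$ only, invoking the proof of \cref{homzigzagdelta} as a black box), whereas you show it is an isomorphism by also checking $r=2$ and unpacking the connecting-morphism argument explicitly; and your inductive step goes from $i-1$ to $i$ while the paper's goes from $i$ to $i+1$.
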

                \begin{proof}
                    We prove the claim by induction on $i$.
                    For $i=1$, the composition $\Phi^2_{a-2,b}=\ol{\Phi}^2_{a-4,b}\pi_{a-2,b}$ spans $\Hom_{\proj^n}(\zz{+}{a-2}{b},\zz{+}{a-2}{b}[2])$ by \cref{zigzaghomdiags,zigzaghoms}.
    
                    We want to show that this morphism factors through $\zz{+}{a}{b}[2]$.
                    Completing the right column of the square to the triangle $\stan{a}[2]\longto{\iota_{a,b}}\zz{+}{a}{b}[2]\longto{\pi_{a,b}}\zz{+}{a-2}{b}[2]\longto{\psi_{a-2,b}}\stan{a}[3]$ and applying $\Hom_{\proj^n}(\zz{+}{a-2}{b},-)$ yields the exact sequence
                    \begin{equation*}
                        \Hom_{\proj^n}(\zz{+}{a-2}{b},\zz{+}{a}{b}[2])\to \Hom_{\proj^n}(\zz{+}{a-2}{b},\zz{+}{a-2}{b}[2])\to \Hom_{\proj^n}(\zz{+}{a-2}{b},\stan{a}[3]).
                    \end{equation*}
                    By \cref{zigzaghoms} the first two terms are $1$-dimensional, and by the same argument as in the proof of \cref{homzigzagdelta} we get $\Hom_{\proj^n}(\zz{+}{a-2}{b},\stan{a}[3])=0$, which yields the required factorization.
                    
                    For the inductive step, we use the diagram
                    \begin{equation*}
                        \begin{tikzcd}[column sep=width("aaaaaaaaaaaaa")]
                            \zz{+}{a-2}{b}\arrow{r}{\pi_{a-2i,b}\dots\pi_{a-2,b}}\arrow{d}[swap]{\ol{\Phi}^2_{a-2,b}}
                            &\zz{+}{a-2i-2}{b}\arrow{r}{\pi_{a-2i-2,b}}\arrow{d}{\ol{\Phi}^2_{a-2i-2,b}}
                            &\zz{+}{a-2i-4}{b}\arrow{d}{\ol{\Phi}^2_{a-2i-4,b}}\\
                            \zz{+}{a}{b}[2]\arrow{r}{\pi_{a-2i+2,b}\dots\pi_{a,b}}
                            &\zz{+}{a-2i}{b}[2]\arrow{r}{\pi_{a-2i,b}}
                            &\zz{+}{a-2i-2}{b}[2]\rlap{.}
                        \end{tikzcd}
                    \end{equation*}
                    By the base case, the right square commutes, and by induction the left square commutes (in both cases up to a non-zero scalar).
                    Thus the outer rectangle commutes up to a non-zero scalar by an easy diagram chase.
                \end{proof}
                \begin{lemma}\label{secondcriticallemma}
                   For $0<b<a$ and $a-b$ even, the square
                    \begin{equation*}
                        \begin{tikzcd}
                            \zz{+}{a}{b}\arrow{d}[swap]{\pi_{b+2,b}\dots\pi_{a,b}}\arrow{r}{\Phi^2_{a,b}}
                            &\zz{+}{a}{b}[2]\arrow{d}{\pi_{b+2,b}\dots\pi_{a,b}}\\
                            \ic{b}\arrow{r}{\epsilon^{2}_{b,b}}
                            &\ic{b}[2]
                        \end{tikzcd}
                    \end{equation*}
                    commutes up to a non-zero scalar. 
                \end{lemma}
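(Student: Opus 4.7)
The plan is to exploit the fact that $\Hom_{\proj^n}(\zz{+}{a}{b}, \ic{b}[2])$ is one-dimensional by \cref{HomZZplusIC}.(1) (since $a-b$ is even and $0 < b$ ensures $2 \leq 2b$). The clockwise composition in the square equals $\epsilon^{2}_{b,b}\circ m^{0}_{a,b} = m^{2}_{a,b}$ by \cref{HomZZplusICmodule}, which is a canonical non-zero generator, so it suffices to show that the counterclockwise composition $m^{0}_{a,b}[2]\circ\Phi^{2}_{a,b}$ is also non-zero.

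Using the factorization $\Phi^{2}_{a,b} = \ol{\Phi}^{2}_{a-2,b}\circ\pi_{a,b}$ from \cref{zigzaghomdiags}, for $a-b \geq 4$ I would apply \cref{criticallemma} with the maximal value $i = (a-b-2)/2$ to obtain a non-zero scalar $\lambda$ satisfying
\[
    (\pi_{b+4,b}\cdots\pi_{a,b})[2]\circ\ol{\Phi}^{2}_{a-2,b} = \lambda\cdot\ol{\Phi}^{2}_{b,b}\circ(\pi_{b+2,b}\cdots\pi_{a-2,b}).
\]
Pre- and post-composing with $\pi_{a,b}$ and $\pi_{b+2,b}[2]$ respectively, and recognising $m^{0}_{a,b} = \pi_{b+2,b}\cdots\pi_{a,b}$, yields
\[
    m^{0}_{a,b}[2]\circ\Phi^{2}_{a,b} = \lambda\cdot\bigl(\pi_{b+2,b}[2]\circ\ol{\Phi}^{2}_{b,b}\bigr)\circ m^{0}_{a,b}.
\]
The same identity (without the scalar $\lambda$) arises directly in the base case $a = b+2$, using only $\Phi^{2}_{b+2,b} = \ol{\Phi}^{2}_{b,b}\circ\pi_{b+2,b}$. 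Since $\Hom_{\proj^n}(\ic{b},\ic{b}[2]) = \kk\cdot\epsilon^{2}_{b,b}$, the inner factor $\pi_{b+2,b}[2]\circ\ol{\Phi}^{2}_{b,b}$ equals $c\cdot\epsilon^{2}_{b,b}$ for some scalar $c$, and the lemma reduces to showing $c \neq 0$.

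The main step is therefore to prove $c \neq 0$. For this, I would apply $\Hom_{\proj^n}(\ic{b},-)$ to the defining triangle $\stan{b+2}\to\zz{+}{b+2}{b}\longto{\pi_{b+2,b}}\ic{b}\to\stan{b+2}[1]$. By \cref{mor_simple_stand} with $l = b$ and $k = b+2$, the non-vanishing degrees of $\Hom_{\proj^n}(\ic{b},\stan{b+2}[r])$ are $r = 1$ and $r = 2b+2$, so for $b \geq 1$ both $\Hom_{\proj^n}(\ic{b},\stan{b+2}[2])$ and $\Hom_{\proj^n}(\ic{b},\stan{b+2}[3])$ vanish. Hence the induced map $\pi_{b+2,b}\circ-\colon \Hom_{\proj^n}(\ic{b},\zz{+}{b+2}{b}[2])\to\Hom_{\proj^n}(\ic{b},\ic{b}[2])$ is an isomorphism between the one-dimensional spaces given by \cref{HomZZminusIC}.(1) and \cref{maps_between_simples}. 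It sends the generator $\ol{\Phi}^{2}_{b,b}$ to $c\cdot\epsilon^{2}_{b,b}$, which must therefore be non-zero, completing the proof.
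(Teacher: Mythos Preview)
Your proof is correct, and it takes a genuinely different route from the paper's. Both arguments begin the same way: since $\Hom_{\proj^n}(\zz{+}{a}{b},\ic{b}[2])$ is one-dimensional and the bottom-left composite $m^{2}_{a,b}$ is non-zero, it suffices to show the top-right composite is non-zero. From there the two arguments diverge. The paper instead shows that the bottom-left composite factors through the right vertical map, by using the triangle $\zz{+}{a}{b+1}\to\zz{+}{a}{b}\to\ic{b}\to\zz{+}{a}{b+1}[1]$ and proving the vanishing $\Hom_{\proj^n}(\zz{+}{a}{b},\zz{+}{a}{b+1}[3])=0$ via a fresh induction on $a-b$ (with two long exact sequences in the inductive step). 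Your approach avoids this new induction entirely: you invoke \cref{criticallemma} with $i=(a-b-2)/2$ to collapse the general case to the single question of whether $\pi_{b+2,b}[2]\circ\ol{\Phi}^{2}_{b,b}\colon\ic{b}\to\ic{b}[2]$ is non-zero, and then settle that by one short long-exact-sequence computation using \cref{mor_simple_stand}. Your route is more economical in that it reuses machinery already in place rather than setting up a parallel induction; the paper's route is more self-contained in that it does not rely on \cref{criticallemma}, though in context that lemma is already available.
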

                \begin{proof}
                    The bottom left path is non-zero by \cref{HomZZplusICmodule}, and hence we have to show that it factors through the vertical morphism on the right.
                    By the triangle $\zz{+}{a}{b+1}\to\zz{+}{a}{b}\to\ic{b}\to\zz{+}{a}{b+1}[1]$, it suffices to show $\Hom(\zz{+}{a}{b},\zz{+}{a}{b+1}[3])=0$.
                    We prove this by induction on $a-b$.
                    In the base case $a=b+2$ we have $\zz{+}{a}{b+1}=\stan{a}$, and thus the claim follows from \cref{homzigzagdelta}.
                    
                    For the inductive step, assume that $a>b+2$.
                    We apply the functor $\Hom(\zz{+}{a}{b},-)$ to the triangle $\stan{a}\to \zz{+}{a}{b+1}\to\zz{+}{a-2}{b+1}\to\stan{a}[1]$ defining $\zz{+}{a}{b+1}$ to get the long exact sequence
                    \begin{equation*}
                        \ldots\to\Hom(\zz{+}{a}{b},\stan{a}[r])\to\Hom(\zz{+}{a}{b},\zz{+}{a}{b+1}[r])\to\Hom(\zz{+}{a}{b},\zz{+}{a-2}{b+1}[r])\to\ldots.
                    \end{equation*}
                    By \cref{homzigzagdelta} we have $\Hom(\zz{+}{a}{b},\stan{a}[r])=0$ unless $r=a+b$, and therefore we have $\Hom(\zz{+}{a}{b},\zz{+}{a}{b+1}[3])\cong\Hom(\zz{+}{a}{b},\zz{+}{a-2}{b+1}[3])$ (note that $a>b+2$ and $a-b$ even implies $a+b>4$).
                    
                    To compute $\Hom(\zz{+}{a}{b},\zz{+}{a-2}{b+1}[3])$, we use the long exact sequence obtained by applying $\Hom(-,\zz{+}{a-2}{b+1})$ to the triangle $\stan{a}\to \zz{+}{a}{b}\to\zz{+}{a-2}{b}\to\stan{a}[1]$.
                    By \cref{DeltaHoms} and the inductive construction of $\zz{+}{a-2}{b+1}$, we have $\Hom(\stan{a},\zz{+}{a-2}{b+1}[r])=0$ for all $r$, and therefore in particular
                    \begin{equation*}
                        \Hom(\zz{+}{a}{b},\zz{+}{a-2}{b+1}[3])\cong\Hom(\zz{+}{a-2}{b},\zz{+}{a-2}{b+1}[3])
                    \end{equation*}
                    which vanishes by induction on $a-b$.
                \end{proof}
                \begin{theorem}\label{stringsplike}
                    Let $0\leq b\leq a\leq n$.
                    \begin{enumerate}
                        \item If $a-b$ is even, then $\zz{\pm}{a}{b}$ is $\proj^{(a+b)/2}$-like.
                        \item If $a-b$ is odd, then $\zz{\pm}{a}{b}$ is $\proj^{(a-b-1)/2}$-like.
                    \end{enumerate}
                \end{theorem}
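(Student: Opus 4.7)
By Verdier duality it suffices to prove the claim for $\zz{+}{a}{b}$. The boundary cases $a=b$ (with $\zz{+}{a}{a}=\ic{a}$, which is $\proj^a$-like by \cref{simples_P-obj}) and $a=b+1$ (with $\zz{+}{a}{a-1}=\stan{a}$ exceptional by \cref{DeltaHoms}) are immediate, so we assume $a\geq b+2$. Set $k=(a+b)/2$ if $a-b$ is even and $k=(a-b-1)/2$ if $a-b$ is odd. By \cref{zigzaghoms}, $\End_{\proj^n}^*(\zz{+}{a}{b})$ is one-dimensional in each even degree $2i$ with $0\leq i\leq k$ and vanishes in all other degrees, so its graded dimension matches that of $\kk[t]/(t^{k+1})$ with $\deg(t)=2$. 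Letting $t=\Phi^2_{a,b}$, it then suffices to prove $t^i\neq 0$ for all $1\leq i\leq k$: one-dimensionality of each $\End_{\proj^n}^{2i}(\zz{+}{a}{b})$ then forces $t^i=\lambda_i\Phi^{2i}_{a,b}$ with $\lambda_i\in\kk\setminus\{0\}$, hence $\End_{\proj^n}^*(\zz{+}{a}{b})\cong\kk[t]/(t^{k+1})$.

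We proceed by induction on $i$. Assuming $t^i=\lambda_i\Phi^{2i}_{a,b}$ with $\lambda_i\neq 0$, we have $t^{i+1}=\lambda_i\,\Phi^{2i}_{a,b}[2]\circ\Phi^2_{a,b}$, and it suffices to show that this composition is non-zero. We detect non-vanishing by post-composing with $n^0_{a,b}[2i+2]\colon\zz{+}{a}{b}[2i+2]\to\ic{a}[2i+2]$. The key observation is that, for every $0\leq j\leq k$, the defining diagrams in \cref{zigzaghomdiags} yield the identity
\[
n^0_{a,b}[2j]\circ\Phi^{2j}_{a,b}=n^{2j}_{a,b}.
\]
Indeed, in the even-parity case for $j\leq(a-b)/2$ and in the odd-parity case, the right-hand side of the diagram reads $\epsilon^{2j}_{a-2j,a}\circ n^0_{a-2j,b}\circ \pi_{a-2j+2,b}\cdots\pi_{a,b}$, while in the even-parity case for $j>(a-b)/2$ it reads $\epsilon^{2j}_{b,a}\circ m^0_{a,b}$; both agree with the piecewise definition of $n^{2j}_{a,b}$ in \cref{HomZZtopICdiag}. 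Applying this to $j=i$ and $j=i+1$ we obtain
\[
n^0_{a,b}[2i+2]\circ\Phi^{2i}_{a,b}[2]\circ\Phi^2_{a,b}=n^{2i}_{a,b}[2]\circ\Phi^2_{a,b}.
\]

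Iterated application of \cref{HomZZtopICmodule} (valid for $i+1\leq k$) gives $n^{2i}_{a,b}=c_i(\epsilon^2_{a,a})^i\circ n^0_{a,b}$ with $c_i\in\kk\setminus\{0\}$, and combining this with the defining identity $n^0_{a,b}[2]\circ\Phi^2_{a,b}=n^2_{a,b}$ together with one more application of \cref{HomZZtopICmodule} we get
\[
n^{2i}_{a,b}[2]\circ\Phi^2_{a,b}=c_i\,(\epsilon^2_{a,a})^i[2]\circ n^2_{a,b}=c\,n^{2i+2}_{a,b}
\]
for some $c\in\kk\setminus\{0\}$. Since $n^{2i+2}_{a,b}\neq 0$ whenever $i+1\leq k$ by \cref{HomZZtopIC}, this composition is non-zero, completing the induction. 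The main technical point is the identification $n^0_{a,b}[2j]\circ\Phi^{2j}_{a,b}=n^{2j}_{a,b}$, which in the even-parity case requires matching the two different defining diagrams of $\Phi^{2j}_{a,b}$ to the piecewise construction of $n^{2j}_{a,b}$; once that is in hand, the rest is a short chase using the module structure established in \cref{HomZZtopICmodule}.
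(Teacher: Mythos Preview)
Your argument is correct and takes a genuinely different, and in fact more economical, route than the paper's proof. Both proofs start the same way: reduce to $\zz{+}{a}{b}$ by Verdier duality, use \cref{zigzaghoms} to get the graded dimension, and then aim to show that $\Phi^{2i}_{a,b}\circ\Phi^2_{a,b}$ is a non-zero multiple of $\Phi^{2i+2}_{a,b}$. The divergence is in how this last step is carried out.

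The paper proves \cref{criticallemma} and \cref{secondcriticallemma}, which assert that certain squares involving the $\ol{\Phi}^2$ and the projections $\pi$ (resp.\ $\epsilon^2_{b,b}$) commute up to a non-zero scalar; these are established via separate vanishing arguments for Hom spaces into $\stan{a}[3]$ and $\zz{+}{a}{b+1}[3]$. A large diagram chase then pieces these together. By contrast, you bypass both lemmas entirely: you observe from \cref{zigzaghomdiags} that $n^0_{a,b}\circ\Phi^{2j}_{a,b}=n^{2j}_{a,b}$ on the nose (for all relevant $j$, including the range $j>(a-b)/2$), and then detect non-vanishing of $\Phi^{2i}_{a,b}\circ\Phi^2_{a,b}$ by post-composing with $n^0_{a,b}$ and invoking \cref{HomZZtopICmodule}, which is already available from \cref{prel_results}. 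In effect, your proof trades the two auxiliary commutativity lemmas for the single module-structure relation $\epsilon^2_{a,a}\circ n^r_{a,b}\doteq n^{r+2}_{a,b}$, making the argument shorter. What the paper's approach buys is a slightly finer statement (commutativity of the intermediate squares, not merely non-vanishing of the end composition), and its remark after the theorem records that one would like these squares to commute exactly; your approach does not address that refinement, but it is not needed for the theorem as stated.
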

                \begin{proof}
                    By Verdier duality it suffices to show that $\zz{+}{a}{b}$ is $\proj$-like.
                    By \cref{zigzaghoms} we have isomorphisms of graded $\kk$-vector spaces $\End_{\proj^n}^*(\zz{+}{a}{b})\cong \kk[t]/(t^{(a+b)/2+1})$ if $a-b$ is even (respectively $\End_{\proj^n}^*(\zz{+}{a}{b})\cong \kk[t]/(t^{(a-b-1)/2+1})$ if $a-b$ is odd) with $\deg(t)=2$.
                    Therefore we only have to show that the composition of the canonical morphisms $\zz{+}{a}{b}\longto{\Phi^2_{a,b}}\zz{+}{a}{b}[2]\longto{\Phi^{2i}_{a,b}}\zz{\pm}{a}{b}[2i+2]$ is non-zero for $0<i<\frac{a+b}{2}$ if $a-b$ is even (respectively $0<i<\frac{a-b-1}{2}$ if $a-b$ is odd).
                    We show that up to a non-zero scalar this composition agrees with the canonical morphism $\Phi^{2i+2}_{a,b}\colon \zz{\pm}{a}{b}\to\zz{\pm}{a}{b}[2i+2]$.

                    First assume that $i<\frac{a-b}{2}$.
                    By \cref{zigzaghomdiags}, the composition $\Phi^{2i}_{a,b}\Phi^2_{a,b}$ and the morphism $\Phi^{2i+2}_{a,b}$ (which is not drawn) are defined by the non-dashed arrows in the diagram
                    \begin{equation*}
                    \adjustbox{scale=0.6,center}{
                        \begin{tikzcd}[column sep=width("aaaaaaaaaaaa"),row sep=large,bend angle=15]
                            \zz{+}{a}{b}\arrow{rrr}{\Phi^2_{a,b}}\arrow{rd}{\pi_{a,b}}\arrow[two heads]{dd}[swap]{\pi_{a-2i,b}\dots\pi_{a,b}}
                            &&&\zz{+}{a}{b}[2]\arrow{rrr}{\Phi^{2i}_{a,b}}\arrow{rd}{\pi_{a-2i+2,b}\dots\pi_{a,b}}\arrow{ld}{n^0_{a,b}}
                            &&&\zz{+}{a}{b}[2i+2]\arrow{dd}[swap]{n^0_{a,b}}\arrow{ld}{n^0_{a,b}}\\
                            &\zz{+}{a-2}{b}\arrow{r}{\epsilon^2_{a-2,a}n^0_{a-2,b}}\arrow{ld}{\pi_{a-2i,b}\dots\pi_{a-2,b}}
                            &\ic{a}[2]
                            &&\zz{+}{a-2i}{b}[2]\arrow{r}{\epsilon^{2i}_{a-2i,a}n^0_{a-2i,b}}
                            &\ic{a}[2i+2]\arrow{rd}{=}&\\
                            \zz{+}{a-2i-2}{b}\arrow{rrrrrr}{\epsilon^{2i+2}_{a-2i-2,a}n^0_{a-2i-2,b}}\arrow[dashed]{rrrru}{\ol{\Phi}^2_{a-2i-2,b}}
                            &&&&&&\ic{a}[2i+2]
                        \end{tikzcd}}
                    \end{equation*}
                    We moreover have the canonical morphism $\ol{\Phi}^2_{a-2i-2,b}\colon \zz{+}{a-2i-2}{b}\to\zz{+}{a-2i}{b}[2]$ from \cref{zigzaghomdiags} (the dashed arrow in the diagram).
                    \begin{steps}
                        \item The square
                            \begin{equation*}
                                \begin{tikzcd}[column sep=width("aaaaaaaaa")]
                                    \zz{+}{a}{b}\arrow{r}{\Phi^2_{a,b}}\arrow{d}[swap]{\pi_{a-2i,b}\dots\pi_{a,b}}
                                    &\zz{+}{a}{b}[2]\arrow{d}{\pi_{a-2i+2,b}\dots\pi_{a,b}}\\
                                    \zz{+}{a-2i-2}{b}\arrow[dashed]{r}{\ol{\Phi}^2_{a-2i-2,b}}
                                    &\zz{+}{a-2i}{b}[2]
                                \end{tikzcd}
                            \end{equation*}
                            commutes up to a non-zero scalar.

                            Proof: From the construction of $\Phi^2_{a,b}$ in \cref{zigzaghomdiags} we obtain the diagram
                            \begin{equation*}
                                \begin{tikzcd}[column sep=width("aaaaaaaaa")]
                                    \zz{+}{a}{b}\arrow[bend left=10]{rrd}{\Phi^2_{a,b}}\arrow[bend right]{rdd}[swap]{\pi_{a-2i,b}\dots\pi_{a,b}}\arrow{rd}{\pi_{a,b}}&&\\
                                    &\zz{+}{a-2}{b}\arrow{d}[swap]{\pi_{a-2i,b}\dots\pi_{a-2,b}}\arrow{r}{\ol{\Phi}^2_{a-2,b}}
                                    &\zz{+}{a}{b}[2]\arrow{d}{\pi_{a-2i+2,b}\dots\pi_{a,b}}\\
                                    &\zz{+}{a-2i-2}{b}\arrow[dashed]{r}{\ol{\Phi}^2_{a-2i-2,b}}
                                    &\zz{+}{a-2i}{b}[2]\rlap{,}
                                \end{tikzcd}
                            \end{equation*}
                            where the triangle at the top commutes by definition of $\Phi^2_{a,b}$ and the triangle at the left commutes obviously.
                            By \cref{criticallemma} the inner square commutes up to a non-zero scalar, and the claim then follows by an easy diagram chase.
                        \item The diagram
                            \begin{equation*}
                                \begin{tikzcd}[column sep=huge]
                                    \zz{+}{a-2i-2}{b}\arrow[dashed]{r}{\ol{\Phi}^2_{a-2i-2,b}}\arrow{rd}[swap]{\epsilon^{2i+2}_{a-2i-2,a}n^0_{a-2i-2,b}}
                                    &\zz{+}{a-2i}{b}[2]\arrow{d}{\epsilon^{2i}_{a-2i,a}n^0_{a-2i,b}}\\
                                    &\ic{a}[2i+2]
                                \end{tikzcd}
                            \end{equation*}
                            commutes.

                            Proof: Since $\epsilon^{2i+2}_{a-2i-2,a}=\epsilon^{2i}_{a-2i,a}\epsilon^2_{a-2i-2,a-2i}$, it suffices to show that the diagram
                            \begin{equation*}
                                \begin{tikzcd}[column sep=huge]
                                    \zz{+}{a-2i-2}{b}\arrow[dashed]{r}{\ol{\Phi}^2_{a-2i-2,b}}\arrow{d}[swap]{n^0_{a-2i-2,b}}
                                    &\zz{+}{a-2i}{b}[2]\arrow{d}{n^0_{a-2i,b}}\\
                                    \ic{a-2i-2}\arrow{r}{\epsilon^2_{a-2i-2,a-2i}}
                                    &\ic{a-2i}[2]
                                \end{tikzcd}
                            \end{equation*}
                            commutes.
                            For this, the construction of the morphism $\ol{\Phi}^2_{a-2i-2,b}\colon \zz{+}{a-2i-2}{b}\to\zz{+}{a-2i}{b}[2]$ from \cref{zigzaghomdiags} gives the diagram
                            \begin{equation*}
                                \begin{tikzcd}[column sep=huge]
                                    \stan{a-2i-2}\arrow{rd}{\iota_{a-2i-2,b}}\arrow[bend left=10]{rrd}{\zeta^{2i}_{a-2i,b}}\arrow[bend right]{rdd}[swap]{\mu_{a-2i-2,a-2i-2}}&&\\
                                    &\zz{+}{a-2i-2}{b}\arrow{r}{\ol{\Phi}^2_{a-2i-2,b}}\arrow{d}[swap]{n^0_{a-2i-2,b}}
                                    &\zz{+}{a-2i}{b}[2]\arrow{d}{n^0_{a-2i,b}}\\
                                    &\ic{a-2i-2}\arrow{r}{\epsilon^2_{a-2i-2,a-2i}}
                                    &\ic{a-2i}[2]
                                \end{tikzcd}
                            \end{equation*}
                            where the outer square and the triangle at the top and the left commute by construction.
                            
                            By \cref{HomZZtopICdiag}, we know that $\Hom(\zz{+}{a-2i-2}{b},\ic{a-2i}[2])\cong\kk$ is spanned by  the composition $\epsilon^2_{a-2i-2,a-2i}n^0_{a-2i-2,b}\colon\zz{+}{a-2i-2}{b}\to\ic{a-2i}[2]$.
                            Moreover, this morphism is uniquely characterized by its precomposition with $\iota_{a-2i-2,b}\colon \stan{a-2i-2}\hookto\zz{+}{a-2i-2}{b}$, which by the proof of \cref{HomZZtopIC} gives $\mu_{a-2i-2,a-2i}\colon \stan{a-2i-2}\to\ic{a-2i}[2]$.
                            From the diagram it follows that
                            \begin{equation*}
                                n^0_{a-2i,b}\ol{\Phi}^2_{a-2i-2,b}\iota_{a-2i-2,b}=\epsilon^2_{a-2i-2,a-2i}\mu_{a-2i-2,a-2i-2}=\mu_{a-2i-2,a-2i},
                            \end{equation*}
                            and therefore $n^0_{a-2i,b}\ol{\Phi}^2_{a-2i-2,b}=\epsilon^2_{a-2i-2,a-2i}n^0_{a-2i-2,b}$, as required.
                        \item The claim then follows from Steps~1 and 2 by a straightforward diagram chase, using the definition of $\Phi^{2i+2}_{a,b}$ from \cref{zigzaghomdiags}.
                    \end{steps}
                    For $i\geq \frac{a-b}{2}$ the proof is very similar.
                    First, if $a=b$, then $\zz{+}{a}{b}=\ic{b}$ and $\Phi^2_{b,b}=\epsilon^2_{b,b}$, so there is nothing to show.
                    For $a>b$, the ``big diagram'' is almost the same, except that $\zz{+}{a-2i}{b}$ and $\zz{+}{a-2i-2}{b}[2]$ have to be replaced by $\ic{b}$ and $\ic{b}[2]$, respectively, and one has to use $\epsilon^2_{b,b}\colon\ic{b}\to\ic{b}[2]$ as the ``dashed morphism''.
                    This satisfies (by construction) $\epsilon^{2i+2}_{b,a}=\epsilon^{2i}_{b,a}\epsilon^2_{b,b}$.
                    By \cref{secondcriticallemma} the square 
                    \begin{equation*}
                        \begin{tikzcd}
                            \zz{+}{a}{b}\arrow{d}[swap]{\pi_{b+2,b}\dots\pi_{a,b}}\arrow{r}{\Phi^{2i}_{a,b}}
                            &\zz{+}{a}{b}[2]\arrow{d}{\pi_{b+2,b}\dots\pi_{a,b}}\\
                            \ic{b}\arrow{r}{\epsilon^{2}_{b,b}}
                            &\ic{b}[2]
                        \end{tikzcd}
                    \end{equation*}
                    commutes up to a non-zero scalar (note that if $\frac{a-b}{2}\leq i<\frac{a+b}{2}$, then $b>0$), and the claim follows from this by an easy diagram chase.\qedhere
                \end{proof}
                \begin{remark}
                    It would be desirable to show that the squares in \cref{criticallemma,secondcriticallemma} commute (not only up to non-zero scalar).
                    This cannot be obtained from our argument, which in both cases uses that morphisms to some cone must vanish to get a factorization.
                    If these squares actually commute, then the proof of \cref{stringsplike} shows that the canonical morphisms $\Phi^{2i}_{a,b}\colon\zz{+}{a}{b}\to\zz{+}{a}{b}[2i]$ form a multiplicative basis of $\End_{\proj^n}^*(\zz{+}{a}{b})$.
                \end{remark}
                Recall that $\proj^1$-objects and $\proj^1$-like objects are also known as spherical and spherelike objects, and that $\proj^0$-like objects are also known as exceptional objects.
                From \cref{stringsplike,CY_objects} one can easily read off the spherelike, spherical and exceptional string objects.
                Note that by \cref{zigzaghoms} spherelike string objects in $\perv{\proj^n}$ are necessarily $2$-spherelike.
                \begin{corollary}\label{sphericalexceptionals}
                    For the string objects in $\perv{\proj^n}$ we have:
                    \begin{enumerate}
                        \item $\zz{\pm}{a}{b}$ is $2$-spherelike if and only if $a-b=3$, or $a=2$ and $b=0$, or $a=b=1$.
                        \item The only $2$-spherical string object is $\zz{\pm}{1}{1}=\ic{1}$ for $n=1$.
                        \item The only string objects that are exceptional are the standard objects and the costandard objects.
                    \end{enumerate}
                \end{corollary}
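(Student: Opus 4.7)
The plan is to extract all three statements directly from \cref{stringsplike} by case analysis on the parity of $a-b$, using \cref{CY_objects} to handle the Calabi--Yau condition in part (2). No new argument is really required beyond reading off the classification and matching numerical conditions.

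For part (1), being $2$-spherelike is equivalent to being $\proj^1$-like, so I would split into the two cases of \cref{stringsplike} and solve $\proj^{(a+b)/2}=\proj^1$ respectively $\proj^{(a-b-1)/2}=\proj^1$ under $0\le b\le a$. The first case gives $a+b=2$, which leaves only $(a,b)\in\{(2,0),(1,1)\}$; the second gives $a-b=3$, matching the three alternatives in the statement. Part (3) is essentially identical: an exceptional object is the same as a $\proj^0$-like object, so either $a=b=0$, giving $\zz{\pm}{0}{0}=\ic{0}=\stan{0}=\costan{0}$, or $a-b=1$, giving $\zz{+}{a}{a-1}=\stan{a}$ and $\zz{-}{a}{a-1}=\costan{a}$. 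This is exactly the collection of (co)standard objects among the string objects.

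For part (2), I would combine the spherelike classification from part (1) with \cref{CY_objects}, which says the only indecomposable Calabi--Yau objects in $\perv{\proj^n}$ are the projective-injectives $P_0,\dots,P_{n-1}$ and $\ic{n}$. As recalled in \cref{otherperspectives}, the projective-injectives and the string modules $\zz{\pm}{a}{b}$ form complementary classes of indecomposables of the special biserial algebra $A_n$, so they do not occur as string objects. Hence the only Calabi--Yau string object is $\ic{n}=\zz{\pm}{n}{n}$, and imposing the spherelike condition from part (1) on this forces $(a+b)/2=n=1$. The only remaining point to watch, rather than a real obstacle, is this clean separation between the projective-injective and string indecomposables when concluding (2); the rest is pure bookkeeping.
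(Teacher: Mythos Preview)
Your proposal is correct and follows exactly the approach the paper intends: the paper does not write out a proof at all, merely noting that the corollary is read off from \cref{stringsplike} and \cref{CY_objects}, which is precisely the case analysis you carry out. Your caution about the separation of projective-injectives from string objects in part~(2) is warranted but harmless, since the paper explicitly lists these as disjoint families of indecomposables in \cref{otherperspectives}.
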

                Combining this with the classification of indecomposable perverse sheaves mentioned in \cref{otherperspectives} (which is obtained from the description in terms of finite-dimensional algebras), this in particular recovers the classification of exceptional objects from \cite[Prop.~3]{MR4090927}.
                Moreover, we also obtain:
                \begin{corollary}\label{onlyPlikes}
                    All indecomposable objects in $\perv{\proj^n}$ are either $\proj$-like or $0$-spherical.
                \end{corollary}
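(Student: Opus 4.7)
The plan is to reduce the statement to the classification of indecomposable perverse sheaves on $\proj^n$ and then check the claim on each of the two resulting families. Via the equivalence $\perv{\proj^n}\cong A_n\lmodfd$ recalled in \cref{otherperspectives}, the Butler--Ringel and Wald--Waschb\"usch classifications for the special biserial algebra $A_n$ say that every indecomposable in $\perv{\proj^n}$ is either an indecomposable projective-injective $P_i$ with $0\leq i\leq n-1$, or a string object $\zz{\pm}{a}{b}$ with $0\leq b\leq a\leq n$ constructed in \cref{strings}.

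The string-object family is immediate: \cref{stringsplike} asserts that every $\zz{\pm}{a}{b}$ is $\proj^k$-like for an explicit $k$ depending on the parity of $a-b$, which is the first alternative in the statement.

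For the projective-injectives $P_i$ with $0\leq i\leq n-1$, I would first observe that $\Hom_{\proj^n}(P_i,P_i[r])=0$ for every $r\neq 0$: projectivity handles $r>0$, and injectivity combined with the faithful heart of $\perv{\proj^n}$ handles $r<0$. To compute $\End(P_i)$ in degree~$0$, I would apply $\Hom_{\proj^n}(P_i,-)$ to the $\stan{}$-flag \cref{delta_flags} of $P_i$ and then to the composition series \cref{delta_comp_series} of $\stan{i}$ and $\stan{i+1}$, using projectivity together with the easy identity $\Hom_{\proj^n}(P_i,\ic{j})=\delta_{ij}\kk$, to obtain $\dim_\kk\End(P_i)=2$. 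A non-zero nilpotent element is explicitly given by the composition $P_i\twoheadrightarrow\ic{i}\hookrightarrow P_i$ of the projective cover and the socle inclusion, whose existence uses that both the top and the socle of $P_i$ are copies of $\ic{i}$. Hence $\End^*_{\proj^n}(P_i)\cong\kk[t]/(t^2)$ with $\deg(t)=0$, so $P_i$ is $0$-spherelike; \cref{projinjCY}(1) then supplies the $0$-Calabi--Yau property, upgrading $P_i$ to a $0$-spherical object.

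The whole argument is essentially bookkeeping once the classification is invoked. The only step worth pausing on is the count $\dim\End(P_i)=2$, which one might naively expect to be~$1$ from the fact that $P_i=I_i$ is an injective hull. The extra dimension arises precisely because the $\stan{}$-flag places a composition factor $\ic{i}$ both in the top and in the socle of $P_i$, and it is the copy in the top that produces the nilpotent endomorphism.
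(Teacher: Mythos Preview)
Your proof is correct and follows the same strategy as the paper: invoke the Butler--Ringel/Wald--Waschb\"usch classification via $A_n$, apply \cref{stringsplike} to the string objects, and use \cref{projinjCY} for the projective-injectives. The only difference is that you spell out the computation $\End^*_{\proj^n}(P_i)\cong\kk[t]/(t^2)$ in detail, whereas the paper simply declares this ``obvious from the description in \cref{SimpleStdProj} and \cref{projinjCY}''; as a minor remark, the vanishing of $\Hom_{\proj^n}(P_i,P_i[r])$ for $r<0$ needs only the $t$-structure (faithful heart gives $\Ext^r=0$ for $r<0$), not injectivity.
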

                \begin{proof}
                    By the classification of indecomposable objects over a special biserial algebra from \cite[p.~161, Thm.]{ButlerRingel} and \cite[Prop.~2.3]{WaldWaschbuesch}, the indecomposable objects in $\perv{\proj^n}$ are the string objects $\zz{\pm}{a}{b}$ for $0\leq a\leq b\leq n$ and the indecomposable projective-injective objects $P_k=I_k$ for $0\leq k<n$.
                    The string objects are $\proj$-like by \cref{stringsplike}, and that $P_k$ is $0$-spherical is obvious from the description in \cref{SimpleStdProj} and \cref{projinjCY}.
                \end{proof}
    \printbibliography
\end{document}